\author{Mar\'{\i}a de la Paz Tirado Hern\'andez}
\newtheorem{teo}{Theorem}[section]
\newtheorem{prop}[teo]{Proposition}
\newtheorem{cor}[teo]{Corollary}
\newtheorem{lem}[teo]{Lemma}
\newtheorem{Def}[teo]{Definition}
\newtheorem{nota}[teo]{Remark}
\newtheorem{ej}[teo]{Example}
\newtheorem{hip}[teo]{Hypothesis}
\newtheorem{notacion}[teo]{Notation}
\newtheorem{Contraejemplo}[teo]{Counterexample}
\newenvironment{proof}{
\noindent {\bf  Proof.}\ }{\hfill $\square$\vspace{1ex}\par}
\newenvironment{La induccion}{
\noindent {\it  Assumption.}\ }{\hfill $\diamond$\vspace{1ex}\par}
\long\def\inhibe#1\endinhibe{\relax}
\newcommand{\h}{\langle h \rangle}
\newcommand{\I}{\mathcal I}
\newcommand{\J}{\mathcal J}
\newcommand{\N}{\mathbb N}
\newcommand{\Q}{\mathbb Q}
\newcommand{\PR}{\mathcal P}
\newcommand{\wdelta}{\widetilde{\delta}}
\DeclareMathOperator{\charr}{\rm char}
\DeclareMathOperator{\Hom}{\rm Hom}
\DeclareMathOperator{\Leap}{\rm Leaps}
\DeclareMathOperator{\End}{\rm End}
\DeclareMathOperator{\Aut}{\rm Aut}
\DeclareMathOperator{\ord}{\rm ord}
\DeclareMathOperator{\IDer}{\rm IDer}
\DeclareMathOperator{\Der}{\rm Der}
\DeclareMathOperator{\HS}{\rm HS}
\DeclareMathOperator{\II}{\rm \mathbb I}
\DeclareMathOperator{\Id}{\rm Id}
\title{On the behavior of modules of $m$-integrable derivations in the sense of Hasse-Schmidt under base change}
\author{Mar\'{\i}a de la Paz Tirado Hern\'andez\thanks{Partially supported by MTM2016-75027, P12-FQM-2696 and FEDER.} \thanks{Departamento de \'Algebra e Instituto de Matem\'aticas (IMUS), Universidad de Sevilla, Espa\~na.}}
\begin{document}
\date {}
\maketitle
\begin{abstract}
We study the behavior of modules of $m$-integrable derivations of a
commutative finitely generated algebra in the sense of Hasse-Schmidt
under base change. We focus on the case of separable ring extensions over a field
of positive characteristic and on the case where the extension is a
polynomial ring in an arbitrary number of variables.

\smallskip
Keywords: Hasse-Schmidt derivation, Integrability, Base change,
Separable algebras.

MSC 2010: 13N15.
\end{abstract}
\vspace{-0.35cm}
\begin{center} INTRODUCTION \end{center}

Let $k$ be a commutative ring and $A$ a commutative $k$-algebra. A
Hasse-Schmidt derivation of $A$ over $k$ of length $m\in \mathbb N$
or $m=\infty$ is a sequence $D=(D_n)_{n=0}^m$ such that: $$
\begin{array}{ccc}
D_0=\Id_A,&\displaystyle D_n(xy)=\sum_{a+b=n} D_a(x)D_b(y)
\end{array}
$$
for all $x,y\in A$ and for all $n$. For $n\geq 1$, the component $D_n$ of a
Hasse-Schmidt derivation is a differential operator of order $\leq
n$ vanishing at 1, in particular $D_1$ is a $k$-derivation.
Hasse-Schmidt derivations of length $m$, also called higher
derivations of order $m$ (see \cite{Ma}), were introduced by H.
Hasse and F.K. Schmidt (\cite{H-S}) and they have been used by
several authors in different contexts (see \cite{Se}, \cite{Tr} or
\cite{Vo}).

An important notion related with Hasse-Schmidt derivations is {\em
integrability}. Let $m\in \mathbb N$ or $m=\infty$, then we say that
$\delta\in \Der_k(A)$ is $m$-integrable if there exists a
Hasse-Schmidt derivation $D$ of length $m$ such that $\delta=D_1$.
The set of all $m$-integrable $k$-derivations is an $A$-submodule of
$\Der_k(A)$ for all $m$, which is denoted by $\IDer_k(A;m)$. If $k$
has characteristic 0 or $A$ is $0$-smooth over $k$, then any
$k$-derivation is $\infty$-integrable (\cite{Ma}), i.e.
$\Der_k(A)=\IDer_k(A;\infty)$. However, if we consider $k$ a ring of
positive characteristic, then we do not have the same property in
general. Nonetheless, the modules $\IDer_k(A;m)$ have better
properties in some way than $\Der_k(A)$ (see \cite{Mo},
\cite{Fe-Na}) and so their exploration could be interesting for
understanding singularities in positive characteristic.

\smallskip

In this paper we study the behavior of modules of $m$-integrable
$k$-derivations under base change. Namely, if $k\to L$ is a ring
extension and $A$ is a $k$-algebra, the well-known base change map
$L\otimes_k \Der_k(A)\to \Der_k(L\otimes_k A)$ induces, for each
$m\geq 1$ or $m=\infty$, a base change map  $\Phi^{L,A}_m:L\otimes_k
\IDer_k(A;m)\to \IDer_L(L\otimes_k A;m)$. We prove that if $A$ is
finitely generated and $L$ is a polynomial ring over $k$ in an
arbitrary number of variables or a separable $k$-algebra over a
field $k$ of positive characteristic then $\Phi^{L,A}_m$ is an
isomorphism for all $m\geq 1$.

\smallskip

This paper is organized as follows:
In section 1 we recall the definition of Hasse-Schmidt derivations
and give some basic results. In section 2 we prove that an
$I$-logarithmic Hasse-Schmidt derivation of a polynomial ring
$R=k[x_1,\ldots,x_d]$ over a ring $k$ of positive characteristic
(where $I\subseteq R$ is an ideal) can be decomposed into two special
Hasse-Schmidt derivations if its 1-component is zero.

In section 3 we recall some classical results of base change maps
for $k$-derivations and we study the induced maps
$\Phi^{L,A}_m:L\otimes_k \IDer_k(A;m)\to \IDer_L(L\otimes_k A;m)$.
We see that $\Phi^{L,A}_m$ is not surjective in general by giving a
counterexample and we prove that if $L$ is a polynomial ring over
$k$ in an arbitrary number of variables or if $L$ is separable
algebra over a field $k$ of positive characteristic then
$\Phi^{L,A}_m$ is bijective for any finitely generated $k$-algebra
$A$ and for all integers $m$.

\begin{center}
Throughout this paper, all rings (and algebras) are assumed to be
commutative.
\end{center}

\section{Hasse-Schmidt derivations}

In this section, we recall the main definitions of the theory of
Hasse-Schmidt derivations and give some basic results. From now on, $k$ will be a commutative ring and
$A$ a commutative $k$-algebra. We denote $\overline{\mathbb
N}:=\mathbb N \cup \{\infty\}$ and, for each integer $m\geq 1$, we
will write $A[|\mu|]_m:=A[|\mu|]/\langle \mu^{m+1}\rangle$ and
$A[|\mu|]_\infty:=A[|\mu|]$. General references for the definitions
and results in this section are \cite[\S 27]{Ma}, \cite{Na2} and
\cite{Na3}.

\begin{Def}\label{DefHS}
A Hasse-Schmidt derivation (HS-derivation for short) of $A$ (over
$k$)  of length $m\geq 1$ (resp. of length $\infty$) is a sequence
$D:=(D_0,D_1,\ldots, D_m)$ (resp. $D=(D_0,D_1,\ldots)$) of
$k$-linear maps $D_i:A\rightarrow A$, satisfying the conditions:
$$
\begin{array}{ccc}
D_0=\Id_A,&\displaystyle D_n(xy)=\sum_{i+j=n} D_i(x)D_j(y)
\end{array}
$$
for all $x,y\in A$ and for all $n$. We write $\HS_k(A;m)$ (resp.
$\HS_k(A;\infty)=\HS_k(A)$) for the set of HS-derivations of $A$ (over
$k$) of length $m$ (resp. $\infty$).
\end{Def}

For $i\geq 1$, the $D_i$ component of a HS-derivation $D\in \HS_k(A;m)$ is a
$k$-linear differential operator of order $\leq i$ vanishing at $1$.
In particular, $D_1$ is a $k$-derivation.
\medskip

The set $\HS_k(A;m)$ has a natural
group structure with identity $\mathbb{I}=(\Id, 0,\ldots)$ and
$D\circ D'=D''\in \HS_k(A;m)$ with
$
D''_n=\sum_{i+j=n} D_i\circ D_j'
$
for all $n$. We denote by $D^\ast\in \HS_k(A;m)$ the inverse of
$D\in \HS_k(A;m)$. Observe that $D_1^\ast=-D_1$ and that the map:
$(\Id, D_1)\in \HS_k(A;1)\mapsto D_1\in \Der_k(A)$ is a group
isomorphism.
\medskip

Any HS-derivation $D\in \HS_k(A;m)$ is determined by the
$k$-algebra homomorphism
$$
\varphi_D: a\in A \longmapsto
\sum_{i\geq 0}^m D_i(a)\mu^i \in A[|\mu|]_m
$$
satisfying $\varphi_D(a)\equiv a\mod \mu$. If we denote
$$
\Hom_{k-\text{alg}}^\circ (A,A[|\mu|]_m):=\{f\in
\Hom_{k-\text{alg}}(A,A[|\mu|]_m)\ |\ f(a)\equiv a \mod \mu
\mbox{ } \forall a\in A\},
$$
we have a bijection
$$
D\in \HS_k(A;m)\mapsto \varphi_D \in \Hom_{k-\text{alg}}^\circ
\left(A,A[|\mu|]_m\right).
$$
The map $\varphi_D$ can be uniquely
extended to a $k[|\mu|]_m$-algebra automorphism
$\widetilde{\varphi_D}:A[|\mu|]_m\rightarrow A[|\mu|]_m$ with
$\widetilde{\varphi_D}(a)\equiv a_0$ for all $a=\sum_i a_i \mu^i\in A[|\mu|]_m$. If we denote
$$
\Aut_{k[|\mu|]_m-\text{alg}}^\circ (A[|\mu|]_m):=\{f\in
\Aut_{k[|\mu|]_m-\text{alg}}(A[|\mu|]_m)\ |\ f(a)\equiv a_0 \mod \mu
\mbox{ } \forall a\in A[|\mu|]_m\},
$$
we have a group isomorphism
$
D\in \HS_k(A;m) \longmapsto \widetilde{\varphi_D} \in \Aut_{k[|\mu|]_m-\text{alg}}^\circ (A[|\mu|]_m),
$
and for $D,D'\in \HS_k(A;m)$ we have $\varphi_{D\circ D'}:= \widetilde{\varphi_D} \circ
\varphi_{D'}$.
\medskip

A HS-derivation $D$ of $A$ over $k$ of length $m$ can
be understood as a power series
$\sum_{i=1}^m D_i \mu^i$ with coefficients in $\End_k(A)$
and so we can consider $\HS_k(A;m)$ as a subgroup of the group of units of $\End_k(A)[|\mu|]_m$.
\medskip

Additional details for the above material can be found in \cite[\S 5]{Na3}.

\begin{Def}\label{l(D)}
For each HS-derivation $D\in \HS_k(A;m)$ such that $D\neq \mathbb{I}$, we denote $$ \ell(D):=\min\{h\geq 1\mbox{ }|\mbox{ }D_h\neq
0\}
$$
and for $D=\mathbb I$, $\ell(D)=\infty$, i.e. $\ell(D) = \ord (D-\mathbb{I})$.
\end{Def}

The following lemma is clear (see \cite[\S 5]{Na3}).

\begin{lem} \label{l(de la composicion)}
If $D,E\in \HS_k(A;m)$, then $\ell(D\circ E)\geq
\min\{\ell(D),\ell(E)\}$. In particular, if $\ell(D),\ell(E)\geq n$,
then $\ell(D\circ E)\geq n$ and $(D\circ E)_n=D_n+E_n$.
\end{lem}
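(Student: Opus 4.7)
The plan is to unwind the definition of the composition in $\HS_k(A;m)$ and read off the two claims by a direct index bookkeeping. Set $p=\ell(D)$, $q=\ell(E)$, and $n=\min\{p,q\}$. By Definition \ref{l(D)}, we have $D_i=0$ for $1\le i<p$ and $E_j=0$ for $1\le j<q$, while $D_0=E_0=\Id_A$. The composition law in $\HS_k(A;m)$ recalled right after Definition \ref{DefHS} is $(D\circ E)_r=\sum_{i+j=r} D_i\circ E_j$, so both statements reduce to computing these convolution sums for small $r$.

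First I would prove $\ell(D\circ E)\ge n$. Fix $r$ with $1\le r<n$ and split
$$
(D\circ E)_r \;=\; D_0\circ E_r \;+\; D_r\circ E_0 \;+\!\!\!\sum_{\substack{i+j=r\\ i,j\ge 1}} D_i\circ E_j.
$$
Since $r<q$ we have $E_r=0$, and since $r<p$ we have $D_r=0$, killing the first two terms. In every remaining summand $i\ge 1$ and $i\le r-1<p$, so $D_i=0$; hence the whole sum vanishes. This gives $(D\circ E)_r=0$ for all $1\le r<n$, which is exactly $\ell(D\circ E)\ge n$.

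Under the stronger hypothesis $\ell(D),\ell(E)\ge n$, the same analysis applied at $r=n$ yields
$$
(D\circ E)_n \;=\; E_n \;+\; D_n \;+\!\!\!\sum_{\substack{i+j=n\\ 1\le i,j\le n-1}} D_i\circ E_j,
$$
and each summand in the remaining sum satisfies $1\le i\le n-1$, so $D_i=0$; thus $(D\circ E)_n=D_n+E_n$, as claimed. There is no real obstacle: once the convolution formula is written down, the index constraints $i<p$ and $j<q$ force every ``mixed'' term to vanish, and only the two diagonal contributions $D_0\circ E_n$ and $D_n\circ E_0$ survive at the level $r=n$.
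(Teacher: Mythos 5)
Your proof is correct and is exactly the direct convolution computation that the paper has in mind when it declares the lemma ``clear'' and refers to \cite[\S 5]{Na3} without writing out an argument. The index bookkeeping is sound: for $r<\min\{\ell(D),\ell(E)\}$ every term $D_i\circ E_j$ with $i+j=r$ vanishes, and at $r=n$ only the two boundary terms $D_0\circ E_n=E_n$ and $D_n\circ E_0=D_n$ survive.
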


\subsection{The action of substitution maps}

In this section, we recall some notions and results of \cite[\S 6]{Na3}.

\begin{Def}\label{Def sustitucion}
An $A$-algebra map $\psi:A[|\mu|]_m\rightarrow A[|\mu|]_n$ will be
called a substitution map if $\psi(\mu)\in \langle \mu\rangle$.
We say that a substitution map $\psi:A[|\mu|]_m\rightarrow
A[|\mu|]_n$ has constant coefficients if $\psi(\mu)=\sum_{i\geq 1} a_i\mu^i$
with $a_i\in k$ for all
$i$.
\end{Def}

Compositions of substitution maps (with constant coefficients) are also substitution maps (with constant coefficients).
\medskip

It is clear that for any $f\in \Hom_{k-\text{\rm alg}}^\circ
(A,A[|\mu|]_m)$ and any substitution map $\psi:A[|\mu|]_m \rightarrow
A[|\mu|]_n$, we have
that $\psi \circ f\in \Hom_{k-\text{\rm alg}}^\circ
(A,A[|\mu|]_n)$.

\begin{notacion} Let $\psi:A[|\mu|]_m \rightarrow
A[|\mu|]_n$ be a substitution map and $D\in
\HS_k(A;m)$ a HS-derivation. We denote by $\psi\bullet D\in \HS_k(A;n)$ the
HS-derivation determined by $\varphi_{\psi\bullet D} = \psi\circ \varphi_D$.
In terms of power series, we have:
$$
\psi \bullet D \equiv \psi \bullet \left( \sum_{i=0}^m D_i \mu^i \right) =\sum_{i=0}^m \psi(\mu)^i D_i.
$$
\end{notacion}

\begin{ej}\label{Ejemplos de sustituciones}
In this paper we mainly use three types of substitution maps. Let $D\in \HS_k(A;m)$ be a HS-derivation of length $m\in \overline \N$.
\begin{enumerate}
\item \label{Multporesc} For each $a\in A$, we define $a\bullet D :=\psi \bullet D \in \HS_k(A;m)$ where
$\psi:A[|\mu|]_m \to A[|\mu|]_m$ is given by $\psi(\mu)=a \mu$. Namely:
$a\bullet D=(a^iD_i)_i$.
\item Let $1\leq n\leq m$ with $n\in \overline \N$ and let us consider the projection $\pi_{mn}:
A[|\mu|]_m\to  A[|\mu|]_n$ ($\pi_{mn}(\mu)=\mu$). The {\em truncation} $\tau_{mn}(D)$ is given by $\tau_{mn}(D)=\pi_{mn}\bullet D$, i.e. $\tau_{mn}(D)=(\Id,
D_1,\ldots,D_n)\in \HS_k(A;n)$.
\item For each integer $n\geq 1$, we define $D[n]=\psi\bullet D \in \HS_k(A;nm)$ where $\psi: A[|\mu|]_m\to A[|\mu|]_{nm}$ is the substitution map given by $\psi(\mu)= \mu^n$. Namely:
$$
    D[n]_i=\left\{\begin{array}{ll}
    D_{i/n}& \mbox{ if } i=0\mod n\\
    0&\mbox{ otherwise.}
    \end{array}\right.
    $$
\end{enumerate}
\end{ej}

Substitution maps of type 2. and 3. of Example \ref{Ejemplos de
sustituciones} have constant coefficients. Moreover, if $a\in k$,
the substitution map $a\bullet (-)$ of type 1. has constant coefficients too.
\medskip

The following lemma comes from 8. and Prop. 11  of \cite[\S6]{Na3}.

\begin{lem}\label{composicion y sustituciones}
Let $\phi:A[|\mu|]_m\rightarrow A[|\mu|]_n$ and
$\psi:A[|\mu|]_n\rightarrow A[|\mu|]_s$ be substitution maps and
$D,D'\in \HS_k(A;m)$ HS-derivations. We have the
following properties:
\begin{itemize}
\item[1.] If $\phi$ has constant coefficients, then $\phi\bullet(D\circ D')=(\phi \bullet D)\circ (\phi\bullet D')$.
\item[2.] $\psi\bullet(\phi \bullet D)=(\psi\circ \phi)\bullet D$.
\end{itemize}
\end{lem}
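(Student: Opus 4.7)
The plan is to work entirely through the correspondence $D\mapsto \varphi_D$ and its extension to $\widetilde{\varphi_D}$, reducing both claims to identities of $k$-algebra maps out of $A[|\mu|]_m$.

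Part 2 is essentially a formality. By definition of $\bullet$,
$$\varphi_{\psi\bullet (\phi\bullet D)} = \psi\circ \varphi_{\phi\bullet D} = \psi\circ \phi\circ \varphi_D = \varphi_{(\psi\circ\phi)\bullet D},$$
and since $D\mapsto \varphi_D$ is a bijection between $\HS_k(A;n)$ (for the corresponding $n$) and the set $\Hom_{k\text{-alg}}^\circ(A,A[|\mu|]_n)$, the equality $\psi\bullet(\phi\bullet D) = (\psi\circ \phi)\bullet D$ follows. No hypothesis on the maps beyond being substitution maps is needed.

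Part 1 carries the real content. Using $\varphi_{D\circ D'}=\widetilde{\varphi_D}\circ \varphi_{D'}$, the two sides translate into
$$\varphi_{\phi\bullet (D\circ D')} = \phi\circ \widetilde{\varphi_D}\circ \varphi_{D'},\qquad
\varphi_{(\phi\bullet D)\circ (\phi\bullet D')} = \widetilde{\varphi_{\phi\bullet D}}\circ \varphi_{\phi\bullet D'} = \widetilde{\varphi_{\phi\bullet D}}\circ \phi\circ \varphi_{D'}.$$
So the statement reduces, via the bijection $D\mapsto \varphi_D$, to the single commutativity
$$\phi\circ \widetilde{\varphi_D} = \widetilde{\varphi_{\phi\bullet D}}\circ \phi$$
as $k$-algebra homomorphisms $A[|\mu|]_m\to A[|\mu|]_n$. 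Since $A[|\mu|]_m$ is generated as a $k$-algebra by $A$ together with $\mu$, it suffices to verify the identity on these generators. For $a\in A$ both sides equal $\phi(\varphi_D(a)) = \varphi_{\phi\bullet D}(a)$. For $\mu$, the left side gives $\phi(\widetilde{\varphi_D}(\mu))=\phi(\mu)$ because $\widetilde{\varphi_D}$ is $k[|\mu|]_m$-linear; the right side gives $\widetilde{\varphi_{\phi\bullet D}}(\phi(\mu))$, which equals $\phi(\mu)$ precisely because $\phi$ has constant coefficients, so $\phi(\mu)\in k[|\mu|]_n$ and hence lies in the subring on which $\widetilde{\varphi_{\phi\bullet D}}$ acts as the identity.

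The only real obstacle is pinpointing exactly where the constant-coefficients hypothesis is used, and the argument above isolates it: without it, $\phi(\mu)$ would not belong to the fixed subring $k[|\mu|]_n$ of $\widetilde{\varphi_{\phi\bullet D}}$, and the key commutativity would fail. Granted that identity, assembling the four substitutions above yields $\varphi_{\phi\bullet (D\circ D')} = \varphi_{(\phi\bullet D)\circ (\phi\bullet D')}$, and one last appeal to the injectivity of $D\mapsto \varphi_D$ finishes the proof.
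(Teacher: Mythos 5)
Your argument is correct, but it cannot be compared line-by-line with the paper's proof because the paper gives none: Lemma \ref{composicion y sustituciones} is simply imported from items 8 and Prop.~11 of \cite[\S 6]{Na3}. What you have done is supply a self-contained verification, and it is the right one. Part 2 is indeed immediate from $\varphi_{\psi\bullet E}=\psi\circ\varphi_E$ together with the injectivity of $E\mapsto\varphi_E$. For part 1 you correctly reduce everything to the intertwining identity $\phi\circ\widetilde{\varphi_D}=\widetilde{\varphi_{\phi\bullet D}}\circ\phi$, and your check on generators isolates exactly where constant coefficients enter: $\widetilde{\varphi_{\phi\bullet D}}$ is only guaranteed to fix elements of $k[|\mu|]_n$, so one needs $\phi(\mu)\in k[|\mu|]_n$, which is precisely the constant-coefficients condition; for $\phi(\mu)=a\mu$ with $a\in A\setminus k$ the right-hand side would produce $\varphi_{\phi\bullet D}(a)\mu\neq a\mu$ in general, so the hypothesis is genuinely necessary.

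One small refinement you should add: for $m=\infty$ the ring $A[|\mu|]$ is \emph{not} generated as a $k$-algebra by $A$ and $\mu$ in the algebraic sense, so checking the identity on $A\cup\{\mu\}$ only determines the maps if you also observe that both $\phi\circ\widetilde{\varphi_D}$ and $\widetilde{\varphi_{\phi\bullet D}}\circ\phi$ are continuous for the $\mu$-adic topology (which holds because substitution maps send $\mu$ into $\langle\mu\rangle$ and $\widetilde{\varphi_D}$ is $k[|\mu|]_m$-linear), and that $A[\mu]$ is dense. With that remark the proof is complete for all $m\in\overline{\N}$.
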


As a straightforward consequence we obtain the following corollary.

\begin{cor}\label{La truncation y el desplazamiento iterado}
Let $D,D^1,\ldots,D^t\in \HS_k(A,m)$ be HS-derivations of
length $m\in \overline{\N}$. The following properties hold:
\begin{itemize}
\item[1.] For each $a\in k$, we have $a\bullet(D^1\circ \cdots \circ  D^t)=(a\bullet D^1)\circ \cdots \circ (a\bullet D^t)$.
\item[2.] $\tau_{mn}(D^1\circ \cdots \circ D^t)=\tau_{mn}(D^1)\circ \cdots \circ \tau_{mn}(D^t)$ for any $n\leq m$.
\item[3.] $(D^1\circ \cdots \circ D^t)[n]=D^1[n]\circ \cdots \circ D^t[n]$ for any $n\geq 1$.
\item[4.] $D[n n']=(D[n])[n']$ for any $n,n'\geq 1$.
\end{itemize}
\end{cor}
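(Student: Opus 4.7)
The plan is to derive all four items directly from Lemma \ref{composicion y sustituciones}, exploiting the key observation made just before the corollary: the substitution maps involved in defining $a\bullet(-)$ (when $a\in k$), the truncation $\tau_{mn}$, and the ``shift'' $D\mapsto D[n]$ all have constant coefficients. So item 1 of Lemma \ref{composicion y sustituciones} applies to each of them, which is exactly what is needed for items 1, 2, 3 of the corollary.

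More precisely, for item 1 I would let $\psi\colon A[|\mu|]_m\to A[|\mu|]_m$ be the substitution map $\psi(\mu)=a\mu$, which has constant coefficients since $a\in k$. Lemma \ref{composicion y sustituciones}.1 gives $\psi\bullet(D\circ D')=(\psi\bullet D)\circ(\psi\bullet D')$, i.e. $a\bullet(D\circ D')=(a\bullet D)\circ(a\bullet D')$, and a trivial induction on $t$ extends this to arbitrary finite compositions. Items 2 and 3 follow by the same pattern, replacing $\psi$ with the projection $\pi_{mn}\colon A[|\mu|]_m\to A[|\mu|]_n$ for item 2, and with the substitution $\psi(\mu)=\mu^n$ for item 3; both have constant coefficients, so Lemma \ref{composicion y sustituciones}.1 applies and induction on $t$ concludes.

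For item 4 the argument is different and uses Lemma \ref{composicion y sustituciones}.2. Let $\psi\colon A[|\mu|]_m\to A[|\mu|]_{nm}$ be given by $\psi(\mu)=\mu^n$ and $\psi'\colon A[|\mu|]_{nm}\to A[|\mu|]_{nn'm}$ by $\psi'(\mu)=\mu^{n'}$, so that $D[n]=\psi\bullet D$ and $(D[n])[n']=\psi'\bullet(\psi\bullet D)$. By Lemma \ref{composicion y sustituciones}.2 this equals $(\psi'\circ \psi)\bullet D$, and since $(\psi'\circ\psi)(\mu)=\psi'(\mu^n)=(\psi'(\mu))^n=\mu^{n'n}=\mu^{nn'}$, we obtain $(D[n])[n']=D[nn']$.

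There is no real obstacle here: each item is a one-line deduction from Lemma \ref{composicion y sustituciones}, and the only thing to be mildly careful about is verifying that the substitution maps in items 1, 2, 3 do have constant coefficients (which the paragraph after Example \ref{Ejemplos de sustituciones} already records) so that part 1 of the lemma is applicable.
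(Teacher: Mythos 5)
Your proposal is correct and follows exactly the route the paper intends: the paper states the corollary as a "straightforward consequence" of Lemma \ref{composicion y sustituciones}, and your argument supplies precisely the expected details (constant coefficients plus part 1 of the lemma for items 1--3, and part 2 of the lemma for item 4). Nothing further is needed.
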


The proof of the following lemma is easy and it is left up to the reader (see \cite[\S1.2]{Na2}).

\begin{lem}\label{Relaciones entre operaciones}
Let $D\in \HS_k(A;m)$ be a HS-derivation of length $m\in
\overline \N$, $n\geq 1$ and $q\leq m$. The following properties
hold:
\begin{itemize}
\item[1.] $\left(a^n\bullet D\right)[n]=a\bullet(D[n])$ for all $a\in A$.
\item[2.] $\tau_{mn,m'n}(D[n])=(\tau_{mm'}(D))[n]$ for all $1\leq m'\leq m$.
\item[3.] $\tau_{mq}(a\bullet D)=a\bullet(\tau_{mq}(D))$ for all
$a\in A$.
\end{itemize}
\end{lem}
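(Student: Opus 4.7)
The plan is to reduce each of the three identities to an equality of two compositions of substitution maps, and then invoke part~2 of Lemma~\ref{composicion y sustituciones}. Every operation appearing in the statement is by construction of the form $\psi\bullet(-)$ for a concrete substitution map $\psi$: the scalar multiplication $a\bullet(-)$ corresponds to $\psi(\mu)=a\mu$, the truncation $\tau_{mq}$ to the natural projection $\pi_{mq}$ (with $\pi_{mq}(\mu)=\mu$), and the $n$-fold shift $(-)[n]$ to $\psi(\mu)=\mu^n$. Consequently, each side of each identity has the shape $\psi\bullet(\phi\bullet D)$, which by Lemma~\ref{composicion y sustituciones}(2) collapses to $(\psi\circ\phi)\bullet D$. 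The whole problem thus reduces to checking that the two $A$-algebra maps $\psi\circ\phi$ obtained on the two sides agree.

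For item~1 I would write $a^n\bullet D=\psi_1\bullet D$, $D[n]=\psi_2\bullet D$ and $a\bullet(-)=\psi_3\bullet(-)$ with $\psi_1(\mu)=a^n\mu$, $\psi_2(\mu)=\mu^n$, $\psi_3(\mu)=a\mu$, and observe that both $\psi_2\circ\psi_1$ and $\psi_3\circ\psi_2$ send $\mu$ to $a^n\mu^n$ (using $(a\mu)^n=a^n\mu^n$ for the right-hand side). For item~2, both sides reduce to the substitution $\mu\mapsto\mu^n$ viewed as a map $A[|\mu|]_m\to A[|\mu|]_{m'n}$, realized either as shift-then-truncate or as truncate-then-shift. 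For item~3, both sides reduce to $\mu\mapsto a\mu$ viewed as a map $A[|\mu|]_m\to A[|\mu|]_q$.

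The only genuine point requiring care is the bookkeeping of sources and targets: the substitution maps involved live between different truncated power-series rings $A[|\mu|]_s$, and one must verify that the two compositions on each side share the same source and the same target before declaring them equal. Once this matching is set up, each identity is immediate from Lemma~\ref{composicion y sustituciones}(2), with no component-wise calculation on the individual $D_i$'s needed. I do not anticipate any real obstacle; the whole argument is a short exercise in manipulating substitution maps.
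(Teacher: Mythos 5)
Your proof is correct: each of the three operations is indeed $\psi\bullet(-)$ for an explicit substitution map, both sides of each identity become $(\psi\circ\phi)\bullet D$ by Lemma \ref{composicion y sustituciones}(2), and the two composites agree because an $A$-algebra map out of $A[|\mu|]_m$ is determined by the image of $\mu$ (your check that sources, targets and well-definedness match is the only point of care, and it goes through since $n(m+1)\geq nm+1$ and $m'\leq m$). The paper itself leaves the proof to the reader, so there is nothing to compare against; your reduction to composition of substitution maps is exactly the intended argument, and is equivalent to the direct component-wise verification.
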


The following proposition is proved in \cite[Prop. 11]{Na3}.

\begin{prop}\label{Prop 11 de Na3}
Let $\psi:A[|\mu|]_m\rightarrow A[|\mu|]_n$ be a substitution map
with \underline{constant coefficients}. Then, $(\psi \bullet D)^\ast=\psi\bullet
D^\ast$ for each $D\in \HS_k(A;m)$.
\end{prop}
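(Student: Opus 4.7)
The plan is to reduce the statement immediately to part 1 of Lemma \ref{composicion y sustituciones}, which already tells us that substitution by a map with constant coefficients distributes over the group composition $\circ$ on $\HS_k(A;m)$. Since inverses in any group are characterized by the equation $D\circ D^\ast=\mathbb{I}$, if we can push the bullet $\psi\bullet(-)$ through this equation, we will get $(\psi\bullet D)\circ (\psi\bullet D^\ast)=\psi\bullet \mathbb{I}$, and uniqueness of inverses in $\HS_k(A;n)$ will finish the proof.

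The first step, then, is to verify the harmless identity $\psi\bullet \mathbb{I}=\mathbb{I}$ in $\HS_k(A;n)$. This is essentially a definition-chase: $\varphi_{\mathbb{I}}$ is the canonical inclusion $A\hookrightarrow A[|\mu|]_m$, so $\psi\circ \varphi_{\mathbb{I}}$ is the canonical inclusion $A\hookrightarrow A[|\mu|]_n$ (because $\psi$ is an $A$-algebra map), which is exactly $\varphi_{\mathbb{I}}$ in length $n$. Equivalently, in the power-series description, $\psi\bullet \sum_{i=0}^{m} \mathbb{I}_i \mu^i= \mathbb{I}_0 + \sum_{i\geq 1} \psi(\mu)^i\cdot 0 =\Id_A$.

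The second step is to apply part 1 of Lemma \ref{composicion y sustituciones} to the pair $D,D^\ast\in \HS_k(A;m)$ (here is exactly where the constant-coefficients hypothesis on $\psi$ is used) to obtain
\[
\psi\bullet (D\circ D^\ast)=(\psi\bullet D)\circ (\psi\bullet D^\ast).
\]
Combining this with $D\circ D^\ast=\mathbb{I}$ and $\psi\bullet \mathbb{I}=\mathbb{I}$ yields $(\psi\bullet D)\circ (\psi\bullet D^\ast)=\mathbb{I}$ in $\HS_k(A;n)$, and symmetrically on the other side. Hence $\psi\bullet D^\ast$ is a two-sided inverse of $\psi\bullet D$ in the group $\HS_k(A;n)$, so it must equal $(\psi\bullet D)^\ast$.

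There is no real obstacle here; the only subtlety is remembering that part 1 of Lemma \ref{composicion y sustituciones} genuinely requires constant coefficients (without this assumption $\psi$ does not respect the group composition, as $\psi\circ \widetilde{\varphi_D}$ need not factor as the composition of an automorphism of $A[|\mu|]_n$ with $\psi\circ \varphi_{D'}$). Once that hypothesis is in force, the argument is a one-line group-theoretic consequence of distributivity plus the trivial identity $\psi\bullet \mathbb{I}=\mathbb{I}$.
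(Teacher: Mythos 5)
Your argument is correct. Note, however, that the paper does not actually prove this proposition: it is quoted directly from Prop.~11 of \cite{Na3}, so there is nothing internal to compare against. What you have written is a legitimate self-contained derivation from Lemma \ref{composicion y sustituciones}(1): the identity $\psi\bullet\mathbb{I}=\mathbb{I}$ holds because $\psi$ is an $A$-algebra map (equivalently, $\psi\bullet\mathbb{I}=\psi(\mu)^0\,\Id_A=\Id_A$ in the power-series picture), and then distributivity of $\psi\bullet(-)$ over $\circ$ applied to $D\circ D^\ast=\mathbb{I}$ gives $(\psi\bullet D)\circ(\psi\bullet D^\ast)=\mathbb{I}$, which in the group $\HS_k(A;n)$ forces $\psi\bullet D^\ast=(\psi\bullet D)^\ast$ (a one-sided inverse in a group is automatically the inverse, so the ``symmetric'' step is not even needed). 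You also correctly isolate where constant coefficients enter. The only caveat worth keeping in mind is that Lemma \ref{composicion y sustituciones} is itself imported from the same section of \cite{Na3} as the proposition you are proving, so your argument does not replace the external reference; it merely shows that, granting the distributivity statement, the compatibility with inverses comes for free by pure group theory. That is a genuine simplification of the logical dependencies within this paper.
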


The following two lemmas are clear.

\begin{lem}\label{La ultima componente difiere en una derivacion}
Let $D,E\in \HS_k(A;m)$ be two HS-derivations of length
$m\in \N$ such that $\tau_{m,m-1}(D)=\tau_{m,m-1}(E)$. Then, there
exists $\delta\in \Der_k(A)$ such that $D=E\circ (\Id,\delta)[m]$.
\end{lem}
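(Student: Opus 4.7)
The plan is to set $F := E^\ast \circ D$ and show that $F$ is precisely $(\Id,\delta)[m]$ for some $k$-derivation $\delta$; then left-composing with $E$ yields the desired identity $D = E \circ (\Id,\delta)[m]$.

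First I would look at the truncation $\tau_{m,m-1}(F)$. By Corollary \ref{La truncation y el desplazamiento iterado}(2) (truncation is compatible with composition) and the fact that truncation commutes with taking inverses (since the group structure on $\HS_k(A;m)$ is a projective limit via the truncations), one has
$$
\tau_{m,m-1}(F) = \tau_{m,m-1}(E^\ast)\circ \tau_{m,m-1}(D) = \tau_{m,m-1}(E)^\ast\circ \tau_{m,m-1}(D) = \II,
$$
using the hypothesis $\tau_{m,m-1}(D) = \tau_{m,m-1}(E)$. Hence $\ell(F)\geq m$ in the sense of Definition \ref{l(D)}, so $F_i = 0$ for $1\leq i\leq m-1$ and $F = (\Id,0,\ldots,0,F_m)$.

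Next I would check that $\delta := F_m$ is a $k$-derivation. The $k$-linearity is clear since each component of a HS-derivation is $k$-linear; for the Leibniz rule, the HS-derivation identity for $F$ in degree $m$ reads
$$
F_m(xy) = \sum_{i+j=m} F_i(x)F_j(y) = F_m(x)\,y + x\,F_m(y),
$$
all intermediate terms vanishing because $F_1 = \cdots = F_{m-1} = 0$. So $\delta\in \Der_k(A)$.

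Finally, the explicit formula in Example \ref{Ejemplos de sustituciones}(3) shows that $(\Id,\delta)[m]$ equals $(\Id,0,\ldots,0,\delta)$, so $F = (\Id,\delta)[m]$ and therefore
$$
E\circ (\Id,\delta)[m] = E\circ E^\ast\circ D = D,
$$
as required. The only mildly non-routine point is the compatibility of truncation with the inverse operation in step one, but this is immediate from the fact that $\tau_{m,m-1}$ is a group homomorphism $\HS_k(A;m)\to \HS_k(A;m-1)$ (equivalently, it corresponds to the projection of $k[|\mu|]_m$-algebras $A[|\mu|]_m\to A[|\mu|]_{m-1}$ via the identification with $\Aut^\circ$).
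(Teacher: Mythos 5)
Your proof is correct: the paper states this lemma without proof (``the following two lemmas are clear''), and your argument --- forming $F=E^\ast\circ D$, using that $\tau_{m,m-1}$ is a group homomorphism to see $F_1=\cdots=F_{m-1}=0$, and reading off the Leibniz rule in degree $m$ to conclude $F_m\in\Der_k(A)$ and $F=(\Id,F_m)[m]$ --- is exactly the standard argument the author intends. No gaps.
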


\begin{lem}\label{Composicion conmuta con las derivaciones en la ultima
componente} Let $D\in \HS_k(A;m)$ be a HS-derivation of
length $m\in \N$ and $\delta\in \Der_k(A)$, then
$D\circ(\Id,\delta)[m]=(\Id,\delta)[m]\circ D$.
\end{lem}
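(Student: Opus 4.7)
The plan is to verify the identity componentwise using the explicit form of $(\Id,\delta)[m]$ and the composition formula for HS-derivations. Set $E:=(\Id,\delta)[m]\in\HS_k(A;m)$. By Example \ref{Ejemplos de sustituciones}(3), the components of $E$ are $E_0=\Id$, $E_m=\delta$, and $E_i=0$ for $1\leq i\leq m-1$. In particular $\ell(E)=m$, so by Lemma \ref{l(de la composicion)} we already know that $\ell(D\circ E)\geq 1$ and $\ell(E\circ D)\geq 1$, and more importantly that the truncations $\tau_{m,m-1}(D\circ E)$ and $\tau_{m,m-1}(E\circ D)$ both coincide with $\tau_{m,m-1}(D)$.

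It then remains only to check the top component $n=m$. Using $D_0=E_0=\Id$ and the vanishing of $E_j$ for $1\leq j\leq m-1$, the composition rule in $\HS_k(A;m)$ gives
$$
(D\circ E)_m=\sum_{i+j=m}D_i E_j = D_m E_0 + D_0 E_m = D_m+\delta,
$$
and symmetrically
$$
(E\circ D)_m = \sum_{i+j=m}E_i D_j = E_0 D_m + E_m D_0 = D_m+\delta.
$$
Since all components of $D\circ E$ and $E\circ D$ agree, they are equal as HS-derivations.

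There is no real obstacle here: the statement is a direct computation from the definition of $(\Id,\delta)[m]$ given in Example \ref{Ejemplos de sustituciones}(3). One could equivalently phrase the argument by observing that two HS-derivations with the same truncation of length $m-1$ differ by a HS-derivation of the form $(\Id,\eta)[m]$ for some $\eta\in\Der_k(A)$ (Lemma \ref{La ultima componente difiere en una derivacion}), and then checking that in both $D\circ E$ and $E\circ D$ that extra derivation equals $\delta$. Either presentation is just a one-line verification.
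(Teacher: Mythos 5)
The paper states this lemma without proof (``the following two lemmas are clear''), and your componentwise verification is exactly the intended direct computation: $(\Id,\delta)[m]$ has only $\Id$ in degree $0$ and $\delta$ in degree $m$, so both compositions have $n$-th component $D_n$ for $n<m$ and $D_m+\delta$ for $n=m$. The only quibble is your citation for the truncation claim: it follows from the same componentwise computation (or from Corollary \ref{La truncation y el desplazamiento iterado}(2) together with $\tau_{m,m-1}((\Id,\delta)[m])=\mathbb{I}$), not from Lemma \ref{l(de la composicion)}, which only bounds $\ell$ of a composition from below.
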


\subsection{Integrable derivations}

In this section, we recall the notion of {\em $n$-integrable derivation}. This notion was introduced in \cite{Ma1} for $n=\infty$. The case of finite $n$ has been studied in \cite{Na2}. We also recall the ``logarithmic point of view'' developed in {\em loc.~cit.}.
From now on, $k$ will be a commutative ring, $A$ a commutative
$k$-algebra and $I\subseteq A$ an ideal.
\medskip

Remember that a $k$-derivation $\delta:A\to A$ is called {\em $I$-logarithmic} if $\delta(I) \subset I$. The set of $I$-logarithmic $k$-derivations is an $A$-submodule of $\Der_k(A)$ and  will be denoted by $\Der_k(\log I)$.
\medskip

If $A$ is a finitely generated $k$-algebra, we may assume that $A$ is the quotient of $R=k[x_1,\dots,x_d]$ by some ideal $I$. There is an exact sequence of $R$-modules:
$$ 0 \longrightarrow I \Der_k(R) \longrightarrow \Der_k(\log I) \longrightarrow \Der_k(A) \longrightarrow 0
$$
where the last map is given by:

\begin{equation}\label{delta barra}
\delta \in \Der_k(\log I) \mapsto \overline{\delta} \in \Der_k(A)\mbox{ with }\overline{\delta}(r+I) = \delta(r) +I \mbox{ for all }r\in R.
\end{equation}

\begin{Def}\label{Log-Int} Let $D\in \HS_k(A;m)$ with $m\in \overline{\mathbb N}$, $I\subset A$ an ideal and $n\geq m$.
\begin{itemize}
\item $D$ is {\em $I$-logarithmic} if $D_i(I)\subseteq I$ for all $i$. The set of $I$-logarithmic HS-derivations is denoted by $\HS_k(\log I;m)$ and $\HS_k(\log I):=\HS_k(\log I;\infty)$. In particular we have
$\Der_k(\log I) \equiv \HS_k(\log I;1)$.
\item More generally, for $r\leq m$, $D$ is {\em $r-I$-logarithmic} if $\tau_{mr}(D)\in \HS_k(\log I;r)$.
\item $D$ is {\em $n$-integrable} if there exists $E\in \HS_k(A,n)$ such that $\tau_{nm}(E)=D$. Any such $E$
will be called an $n$-integral of $D$. If $D$ is $\infty$-integrable
we simply say that $D$ is {\em integrable}. If $m=1$, we write
$\IDer_k(A;n)$ for the set of $n$-integrable derivations and
$\IDer_k(A):=\IDer_k(A;\infty)$.
\item If $D\in \HS_k(\log I;m)$, we say that $D$ is {\em $I$-logarithmically $n$-integrable} if there exists $E\in \HS_k(\log I;n)$ such that $E$ is an $n$-integral of
$D$. We denote $\IDer_k(\log I;n)$ the set of $I$-logarithmically
$n$-integrable derivations  (i.e. for $m=1$) and $\IDer_k(\log
I):=\IDer_k(\log I, \infty)$.
\end{itemize}
\end{Def}

The following lemma is clear.

\begin{lem} Under the above conditions, the following properties hold:
\begin{enumerate}
\item $\HS_k(\log I;m)$ is a subgroup of $\HS_k(A;m)$ for all $m\in \overline \N$.
\item $I$-logarithmicity and $I$-logarithmically $n$-integrability are stable by the action of substitution maps.
\item $\IDer_k(A;n)$ and $\IDer_k(\log I;n)$ are $A$-submodules of $\Der_k(A)$ for all $n\in \overline \N$.
\end{enumerate}
\end{lem}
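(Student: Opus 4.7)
\medskip
\noindent\textbf{Proof plan.} The three assertions are routine verifications, but it is worth organizing them so that they feed one into the next: part 2 leans on the explicit power series description of $\psi\bullet D$, and part 3 reduces to parts 1 and 2 via the group law on $\HS_k(A;n)$ and the substitution maps $a\bullet(-)$ from Example \ref{Ejemplos de sustituciones}.

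For part 1, the identity $\mathbb{I}$ is obviously $I$-logarithmic. Closure under composition is immediate: if $D,E\in\HS_k(\log I;m)$ and $x\in I$, then $(D\circ E)_n(x)=\sum_{i+j=n}D_i(E_j(x))$ lies in $I$ because $E_j(x)\in I$ and each $D_i$ preserves $I$. For closure under the inverse $D^\ast$, I would argue by induction on $n\geq 1$. From $D\circ D^\ast=\mathbb{I}$ one gets
$$
D_n^\ast=-\sum_{i+j=n,\; j<n} D_i\circ D_j^\ast,
$$
and by the induction hypothesis together with the hypothesis $D_i(I)\subseteq I$, the right-hand side sends $I$ into $I$.

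For part 2, recall that $\psi\bullet D=\sum_{i=0}^m \psi(\mu)^i D_i$, so the $n$-th component $(\psi\bullet D)_n$ is an $A$-linear combination of the operators $D_0,D_1,\dots,D_n$, with coefficients given by the $A$-coefficients of $\psi(\mu)^i$ in degree $n$. Since each $D_i$ preserves $I$ and $I$ is an ideal, so does each $(\psi\bullet D)_n$; hence $\psi\bullet D\in\HS_k(\log I;n)$. For $I$-logarithmic $n$-integrability, if $D\in\HS_k(\log I;m)$ and $E\in\HS_k(\log I;n)$ is an $I$-logarithmic $n$-integral, then, applying this to any substitution map (one can extend $\psi$ in the obvious way to longer truncations when needed, or use the particular substitutions of Example \ref{Ejemplos de sustituciones}), $\psi\bullet E$ is an $I$-logarithmic $n$-integral of $\psi\bullet D$ by the formula just above.

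For part 3, $A$-linearity of $\IDer_k(A;n)$ follows by two standard moves. Given $\delta_1,\delta_2\in\IDer_k(A;n)$ with $n$-integrals $D^1,D^2\in\HS_k(A;n)$, the composition $D^1\circ D^2\in\HS_k(A;n)$ has first component $\delta_1+\delta_2$ by Lemma \ref{l(de la composicion)}, so $\delta_1+\delta_2\in\IDer_k(A;n)$. For $a\in A$ and $\delta\in\IDer_k(A;n)$ with $n$-integral $D$, the substitution map $a\bullet(-)$ of type 1 in Example \ref{Ejemplos de sustituciones} yields $a\bullet D\in\HS_k(A;n)$ with first component $a\delta$, so $a\delta\in\IDer_k(A;n)$. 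The same two arguments prove the statement for $\IDer_k(\log I;n)$ once we apply them to $\HS_k(\log I;n)$, which is a subgroup by part 1 and is stable under $a\bullet(-)$ by part 2.

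The only step that requires any real work is the closure of $\HS_k(\log I;m)$ under inverses in part 1, and the inductive argument just sketched handles it without fuss; everything else is a direct consequence of the group structure on $\HS_k(A;m)$ and the explicit coefficient-wise formulas for the substitution actions. $\square$
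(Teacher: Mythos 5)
The paper offers no proof of this lemma (it is introduced with ``The following lemma is clear''), so there is no argument to compare against; your verification is correct and is exactly the routine one the author intends, with the inductive formula $D_n^\ast=-\sum_{i+j=n,\,j<n}D_i\circ D_j^\ast$ handling the only non-immediate point (closure under inverses) and the coefficient-wise description of $\psi\bullet D$ handling stability under substitution maps. The one spot worth tightening is the phrase about extending $\psi$ ``to longer truncations'': for the integrability claim one should say explicitly that the compatible substitution map at the level of the integral is the one given by the same formula for $\psi(\mu)$, and that it commutes with truncation by Lemma \ref{composicion y sustituciones}; for the three substitution maps of Example \ref{Ejemplos de sustituciones}, which are the only ones the paper actually uses, this is immediate.
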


\begin{Def}\label{leap} Let $s>1$ be an
integer. We say that the $k$-algebra $A$ has a {\em leap} at $s>1$ if the
inclusion $\IDer_k(A;s-1)\supsetneq \IDer_k(A;s)$ is proper. The set
of leaps of $A$ over $k$ is denoted by $\Leap_k(A)$.
\end{Def}

If $k \supset \Q$, it is well-known that $\IDer_k(A;m)=\Der_k(A)$
for all $m\in \overline\N$ and so $A$ has no leaps (see \cite{Ma1}). Let us recall Theorem 27.1 of \cite{Ma}.

\begin{teo}\label{IDer=Der}
If $A$ is $0$-smooth over $k$, then any HS-derivation of length $m<\infty$ over $k$ is $\infty$-integrable.
\end{teo}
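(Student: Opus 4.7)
The plan is to reinterpret the problem in terms of lifting algebra morphisms via the bijection
\[
D \in \HS_k(A;n) \longleftrightarrow \varphi_D \in \Hom_{k-\text{alg}}^\circ(A, A[|\mu|]_n)
\]
recalled in Section 1, and then to invoke the defining lifting property of $0$-smoothness. Under this bijection, an $n$-integral of a given $D\in \HS_k(A;m)$ corresponds precisely to a lift of $\varphi_D: A\to A[|\mu|]_m$ through the canonical surjection $\pi_{n,m}: A[|\mu|]_n\twoheadrightarrow A[|\mu|]_m$. So the theorem reduces to producing such a lift for every $n\geq m$, compatibly in $n$.

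First I would recall the standard characterization of $0$-smoothness (cf.\ \cite[\S 25]{Ma}): $A$ is $0$-smooth over $k$ if and only if for every $k$-algebra $B$ and every nilpotent ideal $N\subset B$, any $k$-algebra homomorphism $A\to B/N$ admits a lift to $B$. By induction on the nilpotency order, this reduces to the case $N^2=0$.

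Now fix $D\in \HS_k(A;m)$ with $m\in \N$ and set $\varphi^{(m)}:=\varphi_D$. Inductively, assume a lift $\varphi^{(n)}: A\to A[|\mu|]_n$ of $\varphi_D$ belonging to $\Hom_{k-\text{alg}}^\circ$ has been constructed. The surjection $\pi_{n+1,n}:A[|\mu|]_{n+1}\to A[|\mu|]_n$ has kernel $\langle \mu^{n+1}\rangle/\langle \mu^{n+2}\rangle$ in $A[|\mu|]_{n+1}$, and this ideal has square zero since $2(n+1)\geq n+2$. By $0$-smoothness there exists a $k$-algebra lift $\varphi^{(n+1)}: A\to A[|\mu|]_{n+1}$ of $\varphi^{(n)}$. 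To check that $\varphi^{(n+1)}\in \Hom^\circ$, compose it with the reduction $A[|\mu|]_{n+1}\twoheadrightarrow A$ modulo $\mu$: this composite factors as $\varphi^{(n)}$ followed by reduction modulo $\mu$, which equals $\Id_A$ by the inductive hypothesis. Hence $\varphi^{(n+1)}(a)\equiv a \mod \mu$ for all $a\in A$.

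The compatible sequence $\{\varphi^{(n)}\}_{n\geq m}$ assembles into a $k$-algebra map $\varphi:A\to A[|\mu|]=\varprojlim_n A[|\mu|]_n$ lying in $\Hom_{k-\text{alg}}^\circ(A,A[|\mu|])$, which by the bijection above corresponds to some $E\in \HS_k(A;\infty)$ with $\tau_{\infty,m}(E)=D$, proving $\infty$-integrability. The only point requiring attention is maintaining the condition $\varphi^{(n+1)}\in \Hom^\circ$ at each inductive step---a routine diagram chase modulo $\mu$; beyond that, the proof is essentially ``$0$-smoothness plus induction on the length'', with no genuinely hard step.
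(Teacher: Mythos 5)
Your proof is correct and follows essentially the same route as the source: the paper does not reprove this statement but simply cites Theorem 27.1 of Matsumura's \emph{Commutative Ring Theory}, whose proof is precisely your reduction to the lifting property of $0$-smoothness along the square-zero surjections $A[|\mu|]_{n+1}\twoheadrightarrow A[|\mu|]_n$ via the bijection $D\leftrightarrow\varphi_D$, followed by passage to the inverse limit $A[|\mu|]=\varprojlim_n A[|\mu|]_n$.
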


Let us recall Theorem 4.1 of \cite{Ti2}.

\begin{teo} \label{Teo Integrabilidad}
Let $k$ be a ring of $\charr(k)=p>0$ (i.e. $\mathbb{F}_p \subset k$) and $A$ a
 $k$-algebra. Then, $\Leap_k(A)\subseteq \{p^\tau\ |\ \tau\geq 1\}$.
\end{teo}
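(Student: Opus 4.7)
The plan is to argue the contrapositive: if $s>1$ is not a power of $p$, then $s\notin \Leap_k(A)$, i.e.\ $\IDer_k(A;s-1)=\IDer_k(A;s)$. The inclusion $\IDer_k(A;s)\subseteq \IDer_k(A;s-1)$ comes for free from truncation, so the content is to take an arbitrary $\delta \in \IDer_k(A;s-1)$ with a chosen $(s-1)$-integral $D\in \HS_k(A;s-1)$ and build an $s$-integral $E\in \HS_k(A;s)$ with $E_1=\delta$.

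The key combinatorial input is Kummer's theorem: $s$ is a power of $p$ if and only if $\binom{s}{i}\equiv 0\pmod p$ for all $0<i<s$. Since $\mathbb{F}_p\subset k$, the hypothesis on $s$ therefore supplies some $0<a<s$ with $\binom{s}{a}\in k^{\times}$; this single unit is what the whole argument will need to invert.

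Extending $D$ from length $s-1$ to length $s$ is equivalent to producing a $k$-linear map $D_s:A\to A$ satisfying $$D_s(xy)-xD_s(y)-D_s(x)y=\sum_{0<i<s}D_i(x)D_{s-i}(y) \qquad \text{for all } x,y\in A.$$ The right-hand side is a Hochschild $2$-cocycle, and the task is to exhibit it as a coboundary. I plan to do so by manipulating $D$ through the operations assembled in Section~1: composition $\circ$ in $\HS_k(A;m)$, the shifts $D[n]$ of Example~\ref{Ejemplos de sustituciones}(3), scalar substitutions $c\bullet(-)$ with $c\in k$, and truncations $\tau_{m,n}$, together with the compatibilities of Lemma~\ref{composicion y sustituciones} and Corollary~\ref{La truncation y el desplazamiento iterado}. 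Lemma~\ref{La ultima componente difiere en una derivacion} will then provide the mechanism for extracting the sought $k$-linear map as the top-slot difference of two length-$s$ HS-derivations whose truncations to length $s-1$ coincide.

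The hardest step, and the one where the non-power-of-$p$ hypothesis is indispensable, is arranging these compositions so that the obstruction above appears as precisely $\binom{s}{a}$ times a coboundary. Concretely, expanding an iterated composition of $D$ with itself (possibly twisted by shifts and scalar substitutions) modulo $\mu^{s+1}$ should produce identities whose $\mu^s$-coefficients carry the binomials $\binom{s}{i}$ as weights on the bilinear terms $D_i(x)D_{s-i}(y)$; Kummer's theorem ensures that at least one of those weights is a unit in $k$, and inverting it isolates a coboundary equal to the obstruction. Once $D_s$ is obtained in this way, $D$ extends to an $s$-integral $E$ of $\delta$, giving $\delta\in \IDer_k(A;s)$ and hence the theorem.
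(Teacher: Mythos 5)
First, a point of context: the paper does not prove Theorem \ref{Teo Integrabilidad} at all --- it is quoted verbatim from \cite{Ti2} (Theorem 4.1 there), and the only related material reproduced here is the machinery of Section 2 (Theorem \ref{Extraer una integral de De a partir de D}, Proposition \ref{Descomposicion detallada de HS en parte no log y partes log}). That machinery indicates the route actually taken in the source: reduce to a presentation $A=R/I$ with $R$ a polynomial ring (where every HS-derivation integrates, Theorem \ref{IDer=Der}), pass to $I$-logarithmic HS-derivations via the exact sequence of Proposition \ref{Anillo polinomios, logaritmico, map sobrey}, and decompose HS-derivations through the substitution maps $\mu\mapsto\mu^j$ according to the $p$-divisibility of the indices $j$. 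Your plan --- Hochschild $2$-cocycles plus a binomial-coefficient identity --- is a genuinely different strategy, so it has to stand on its own.

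It does not, as written. The setup is fine: the reduction to the contrapositive, the identification of the obstruction to extending a given $(s-1)$-integral as the symmetric $2$-cocycle $\sum_{0<i<s}D_i(x)D_{s-i}(y)$, and the numerical dichotomy (all $\binom{s}{i}$, $0<i<s$, vanish in $\mathbb{F}_p$ if and only if $s$ is a power of $p$) are all correct. The gap is exactly at the step you flag as hardest, and it is not a matter of missing details: the mechanism you describe cannot work in the form stated. If some manipulation produces an identity whose $\mu^s$-coefficient is $\sum_{0<i<s}\binom{s}{i}\,D_i(x)D_{s-i}(y)=\partial F$, then knowing that a \emph{single} $\binom{s}{a}$ is a unit only lets you extract information about the terms indexed by $a$ and $s-a$; it gives no control over the remaining summands of the unweighted obstruction. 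Take $s=6$, $p=2$: here $\binom{6}{2}=\binom{6}{4}\equiv 1$ but $\binom{6}{1}=\binom{6}{3}=\binom{6}{5}\equiv 0$, so the weighted sum degenerates to $D_2(x)D_4(y)+D_4(x)D_2(y)$ and says nothing whatsoever about $D_1(x)D_5(y)+D_3(x)D_3(y)+D_5(x)D_1(y)$. ``Inverting $\binom{s}{a}$ to isolate a coboundary equal to the obstruction'' is therefore not a legitimate move, and no explicit identity of the advertised shape is exhibited anywhere in the proposal. (There is a secondary issue you should also address if you pursue this line: you fix one $(s-1)$-integral $D$ of $\delta$ and try to extend $D$ itself; a priori $\delta$ could be $s$-integrable while that particular $D$ is not extendable, so either you must prove the stronger surjectivity of $\tau_{s,s-1}$ or allow yourself to modify the intermediate components $D_2,\dots,D_{s-1}$.) To salvage the approach you would need the non-power-of-$p$ hypothesis to enter in a more structural way --- for instance through the decomposition of indices by their $p$-adic valuation as in Proposition \ref{Descomposicion detallada de HS en parte no log y partes log} --- rather than through a single invertible binomial coefficient.
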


\begin{Def} Let $I\subset A$ be an ideal. An {\em $I$-differential operator} is a ($k$-linear) differential operator $H:A \to A$ such that $H(I)\subset I$.
\end{Def}

In the following lemma we collect some easy results that will be used later. Its proof is left up to the reader.

\begin{lem} \label{lema-resumen}
Let $D\in \HS_k(A;m)$ be a HS-derivation of length $m\in \overline\N$, $n,s\geq 1$ positive integers such that $n\leq m$ and $I\subset A$ an ideal. The following properties hold:
\begin{enumerate}
\item[(a)] 
If $D$ is $(n-1)-I$-logarithmic, then $D[s]\in
\HS_k(A;ms)$ is $(ns-1)-I$-logarithmic.
\item[(b)] 
If $D$ is $n-I$-logarithmic, then $D^\ast$ is
$n-I$-logarithmic too.
\item[(c)] 
If $D$ is $(n-1)-I$-logarithmic, then $D^\ast[s]\in \HS_k(A;ms)$ is $(ns-1)-I$-logarithmic.
\item[(d)] 
Let   $D^1,\ldots, D^t\in \HS_k(A;m)$ be an ordered family of
$(n-1)-I$-logarithmic HS-derivations and let us denote
$D:=D^1\circ \cdots \circ D^t\in \HS_k(A;m)$. Then, $D$ is
$(n-1)-I$-logarithmic and $D_n=\sum_{i=1}^t D_n^i+F_n$ where $F_n$
is an $I$-differential operator of order $\leq
n$.
\item[(e)] 
If $D$ is $(n-1)-I$-logarithmic, then, $D_n^\ast=-D_n+F_n$ where
$F_n$ is an $I$-differential operator of order
$\leq n$.
\item[(f)] 
If $D$ is $(n-1)-I$-logarithmic and $E\in \HS_k(\log I;m)$, then
$D\circ E\in \HS_k(A,m)$ is $(n-1)-I$-logarithmic and $(D\circ
E)_n=D_n+F_n$ where $F_n$ is an $I$-differential operator of order $\leq n$.
\end{enumerate}
\end{lem}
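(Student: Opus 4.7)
The plan is to verify the six properties essentially by direct computation, relying throughout on the product formula $(D\circ E)_n=\sum_{i+j=n}D_i\circ E_j$ and on the inversion identity $D\circ D^\ast=\mathbb{I}$. The pieces are logically interdependent in the order (a) $\Rightarrow$ (b) $\Rightarrow$ (c), and (d) $\Rightarrow$ (e) $\Rightarrow$ (f); but the work is mostly bookkeeping of which indices drop below $n$ (and hence are covered by the $(n-1)$-logarithmic hypothesis) versus which equal $n$ (and must be absorbed into the remainder $F_n$).

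For part (a), I would just unwind the definition of $D[s]$: its $i$-th component is $D_{i/s}$ when $s\mid i$ and $0$ otherwise. For $i\leq ns-1$ with $s\mid i$ one has $i/s\leq n-1$, so $D[s]_i(I)=D_{i/s}(I)\subset I$ by hypothesis. Part (b) is a finite induction on $j\leq n$: from $D\circ D^\ast=\mathbb{I}$ one obtains $D^\ast_j=-\sum_{i=1}^{j}D_i\circ D^\ast_{j-i}$, and if $D_i(I)\subset I$ for $i\leq n$ and $D^\ast_l(I)\subset I$ for $l<j\leq n$ (inductive hypothesis), then each summand sends $I$ into $I$. Part (c) is immediate: by (b), $D^\ast$ is $(n-1)-I$-logarithmic, and then (a) applied to $D^\ast$ gives the claim.

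For part (d), I would argue by induction on $t$. The case $t=1$ is trivial. For the inductive step, writing $D'=D^1\circ\cdots\circ D^{t-1}$ one already has $D'$ is $(n-1)-I$-logarithmic with $D'_n=\sum_{i=1}^{t-1}D^i_n+F'_n$ for some $I$-differential operator $F'_n$ of order $\leq n$. Then
$$
(D'\circ D^t)_n=\sum_{i+j=n}D'_i\circ D^t_j=D'_n+D^t_n+\sum_{\substack{i+j=n\\ 0<i,j<n}}D'_i\circ D^t_j,
$$
and each summand in the last sum is a composition of two operators of orders $i$ and $j$ (so the composition has order $\leq n$) such that at least one maps $I$ into $I$ because its index is $\leq n-1$; hence after composing with the other (which, having index $<n$, also preserves $I$), the composite is an $I$-differential operator. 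For degrees $k<n$, the same inspection shows $(D'\circ D^t)_k(I)\subset I$, proving $(n-1)-I$-logarithmicity. Part (e) follows by rearranging
$$
0=(D\circ D^\ast)_n=D_n+D^\ast_n+\sum_{0<i<n}D_i\circ D^\ast_{n-i},
$$
where by (b) (applied with $n-1$ in place of $n$) each $D^\ast_{n-i}$ for $0<i<n$ is $I$-logarithmic and $D_i$ also preserves $I$ since $i<n$, so the sum is an $I$-differential operator of order $\leq n$. Finally, part (f) is a direct split: since $E\in \HS_k(\log I;m)$, every $E_j$ preserves $I$, so expanding $(D\circ E)_n=D_n+E_n+\sum_{0<i<n}D_i\circ E_{n-i}$ and grouping everything except $D_n$ into $F_n$ gives an $I$-differential operator of order $\leq n$; the $(n-1)-I$-logarithmicity of $D\circ E$ follows by the same term-by-term inspection as in (d).

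There is no real obstacle here; the only care needed is to track the two distinct roles of the index $n$, namely that $D_n$ is allowed to break logarithmicity while every $D_i$ with $i<n$ preserves $I$, and symmetrically for $D^\ast$ via~(b). Once (b) is established, parts (c)--(f) become essentially mechanical applications of the convolution formula.
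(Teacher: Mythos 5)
Your proof is correct; the paper itself gives no argument for this lemma (it explicitly leaves the proof to the reader), and your term-by-term analysis of the convolution formula, the inductive treatment of $D^\ast_j=-\sum_{i=1}^{j}D_i\circ D^\ast_{j-i}$ for (b), and the reduction of (c), (e), (f) to (a), (b), (d) is exactly the intended routine verification. Nothing is missing.
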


Let us consider a $k$-algebra $A$, $I\subseteq A$ an ideal and $m\in \overline \N$ and we denote $\pi:A\to A/I$ the natural projection. An $I$-logarithmic HS-derivation $D\in \HS_k(\log I;m)$ gives rise to a unique $\overline D\in \HS_k(A/I;m)$ such that $\overline D_r\circ \pi=\pi\circ D_r$ for all $r\geq 0$, and the following diagram is commutative:

$$
\begin{tikzcd}
A \ar[]{r}{\varphi_D} \ar[]{d}{\pi}&
 A[|\mu|]_m\ar[]{d}{\pi_m}\\
A/I \ar[]{r}{\varphi_{\overline D}} & (A/I)[|\mu|]_m
\end{tikzcd}
$$
 where $\pi_m:A[|\mu|]_m\to (A/I)[|\mu|]_m$ is the natural projection. The map  $\Pi_{\HS,m}^I:D\in \HS_k(\log I;m)\to
\overline D\in \HS_k(A/I)$ is clearly a homomorphism of groups.

\begin{lem}\label{Susti y Poli}
Let $I\subseteq A$ an ideal and $\psi:A[|\mu|]_m\to A[|\mu|]_n$ a
substitution map. Let us denote $B=A/I$ and $\psi_B:B[|\mu|]_m\to
B[|\mu|]_n$ the substitution map induced by $\psi$. Then, for each
$D\in \HS_k(\log I;m)$ we have that
$$
\psi_B \bullet \left(\Pi_{\HS,m}^I(D)\right)=\Pi_{\HS,n}^I\left(\psi
\bullet D\right).
$$
\end{lem}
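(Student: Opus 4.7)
The plan is to prove the equality by passing from HS-derivations to their associated $k$-algebra homomorphisms (via $E\mapsto \varphi_E$) and then running a short diagram chase that uses the universal property of the projection $\pi:A\to B=A/I$.

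First, I note that $\psi\bullet D$ is itself $I$-logarithmic: stability of $I$-logarithmicity under the action of substitution maps was recorded in the lemma immediately after Definition~\ref{Log-Int}. Hence $\Pi^I_{\HS,n}(\psi\bullet D)$ makes sense, and both sides of the asserted identity live in $\HS_k(B;n)$. Since the correspondence $E\in \HS_k(B;n)\leftrightarrow \varphi_E\in \Hom^\circ_{k\text{-alg}}(B,B[|\mu|]_n)$ is a bijection, it suffices to check that their associated algebra maps out of $B$ coincide. By the universal property of the quotient $\pi:A\to B$, this in turn reduces to checking equality after precomposition with $\pi$, i.e.\ that
\[
\varphi_{\psi_B\bullet\overline D}\circ \pi \;=\; \varphi_{\overline{\psi\bullet D}}\circ \pi
\]
as maps $A\to B[|\mu|]_n$.

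Next I compute both sides. For the left-hand side, using $\varphi_{\psi_B\bullet \overline D}=\psi_B\circ \varphi_{\overline D}$ and the defining identity $\varphi_{\overline D}\circ \pi = \pi_m\circ \varphi_D$ of $\Pi^I_{\HS,m}$, I get
\[
\varphi_{\psi_B\bullet \overline D}\circ \pi \;=\; \psi_B\circ \varphi_{\overline D}\circ \pi \;=\; \psi_B\circ \pi_m \circ \varphi_D.
\]
For the right-hand side, using $\varphi_{\overline{\psi\bullet D}}\circ \pi=\pi_n\circ \varphi_{\psi\bullet D}$ together with $\varphi_{\psi\bullet D}=\psi\circ \varphi_D$, I get
\[
\varphi_{\overline{\psi\bullet D}}\circ \pi \;=\; \pi_n\circ \psi\circ \varphi_D.
\]
The two expressions agree because $\psi_B$ is, by its very construction, the unique substitution map making the square $\psi_B\circ \pi_m = \pi_n\circ \psi$ commute (apply $\pi_n$ coefficient-wise to the image of $\mu$ under $\psi$, or equivalently pass to $A/I$-coefficients in the power-series expansion of $\psi(\mu)$).

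The argument is essentially a diagram chase, so there is no real obstacle — the only thing to be careful with is keeping track of three separate projections ($\pi$, $\pi_m$, $\pi_n$) and two substitution maps ($\psi$, $\psi_B$), and invoking the characterization of $\psi_B$ exactly at the last step. Once the bijection $E\leftrightarrow \varphi_E$ and the defining property of $\Pi^I_{\HS,\cdot}$ are applied to reduce to a question about $k$-algebra homomorphisms, the identity follows from the commutativity of the outer rectangle obtained by pasting the squares for $\Pi^I_{\HS,m}$, for $\psi$, and for the construction of $\psi_B$.
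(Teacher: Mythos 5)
Your proof is correct and is exactly the argument the paper intends: the lemma is stated without proof precisely because it reduces, via the bijection $E\mapsto\varphi_E$ and the defining square $\varphi_{\overline D}\circ\pi=\pi_m\circ\varphi_D$, to the commutativity $\psi_B\circ\pi_m=\pi_n\circ\psi$, which is the construction of $\psi_B$. The preliminary observation that $\psi\bullet D$ remains $I$-logarithmic (so that $\Pi^I_{\HS,n}(\psi\bullet D)$ is defined) is a good touch and is covered by the stability lemma following Definition~\ref{Log-Int}.
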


\section{Hasse-Schmidt derivations on polynomial rings}

In this section $k$ will be an arbitrary commutative ring and $R=k[x_1,\ldots,x_d]$ a polynomial ring.

The following result is a straightforward consequence of Theorem \ref{IDer=Der}.

\begin{prop}\label{HSenpolise extiende}
Any HS-derivation of $R$ (over $k$) of length $m\geq 1$
is integrable.
\end{prop}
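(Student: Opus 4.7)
The plan is to apply Theorem \ref{IDer=Der} directly, so the only substantive step is to verify that the polynomial ring $R = k[x_1,\ldots,x_d]$ is $0$-smooth over $k$. If $m = \infty$ there is nothing to prove, so I may assume $m < \infty$ from the outset.

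The first step is to recall the classical fact that a polynomial ring is formally smooth over its base. More precisely, given any $k$-algebra $B$, a nilpotent ideal $J\subseteq B$, and a $k$-algebra homomorphism $\overline{f}:R\to B/J$, a lift $f:R\to B$ is obtained simply by choosing arbitrary preimages in $B$ of the elements $\overline{f}(x_1),\ldots,\overline{f}(x_d)$ and invoking the universal property of the polynomial ring. This establishes that $R$ is $0$-smooth over $k$ in the sense of \cite[\S 25]{Ma}.

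The second step is to invoke Theorem \ref{IDer=Der} with $A=R$: every HS-derivation of $R$ over $k$ of finite length $m$ extends to a HS-derivation of length $\infty$, i.e.\ is integrable. Since no non-trivial auxiliary construction is required, I do not anticipate any obstacle; the whole argument collapses to the verification of $0$-smoothness followed by a citation of \cite[Thm.~27.1]{Ma}.
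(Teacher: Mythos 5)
Your argument is correct and matches the paper's intent exactly: the paper offers no written proof beyond noting that the proposition is a straightforward consequence of Theorem \ref{IDer=Der}, and your verification that $R=k[x_1,\ldots,x_d]$ is $0$-smooth over $k$ via the universal property of the polynomial ring is precisely the missing (and standard) step. Nothing further is needed.
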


\begin{prop}{\rm \cite[\bf{Prop. 1.3.4}]{Na2}}\label{HS log es sobreyectivo}
If $I\subseteq R$ is an ideal, the map
$\Pi_{\HS,m}^I:\HS_k(\log I;m)\to \HS_k(R/I;m)$ is a surjective group homomorphism for all $m\in
\overline\N$.
\end{prop}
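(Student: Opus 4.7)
The plan is to work in the exponential reformulation of HS-derivations as $k$-algebra maps $\varphi\colon R\to R[|\mu|]_m$ satisfying $\varphi(r)\equiv r \mod \mu$, and to exploit that $R=k[x_1,\ldots,x_d]$ is \emph{free} as a commutative $k$-algebra on the generators $x_1,\ldots,x_d$.

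I fix $\overline{D}\in \HS_k(R/I;m)$, corresponding to $\varphi_{\overline{D}}\colon R/I\to (R/I)[|\mu|]_m$. Let $\pi_m\colon R[|\mu|]_m\to (R/I)[|\mu|]_m$ be the surjection obtained by reducing coefficients modulo $I$; its kernel is $I\cdot R[|\mu|]_m$, i.e. polynomials in $\mu$ of degree $\leq m$ all of whose coefficients lie in $I$. For each $i$ the element $\varphi_{\overline{D}}(\overline{x_i})\in (R/I)[|\mu|]_m$ has constant term $\overline{x_i}$, so I can lift it along $\pi_m$ to some $y_i\in R[|\mu|]_m$ with constant term $x_i$. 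By the universal property of the polynomial ring, there is a unique $k$-algebra homomorphism $\varphi\colon R\to R[|\mu|]_m$ with $\varphi(x_i)=y_i$ for all $i$. Since $y_i\equiv x_i\mod \mu$ and both source and target are $k$-algebras, one checks $\varphi(r)\equiv r\mod\mu$ for every $r\in R$; hence $\varphi\in \Hom_{k-\text{alg}}^\circ(R,R[|\mu|]_m)$, and $\varphi=\varphi_D$ for some $D\in \HS_k(R;m)$.

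It remains to verify that $D$ is $I$-logarithmic and that $\Pi_{\HS,m}^I(D)=\overline{D}$. Both follow from a single diagram chase: the compositions $\pi_m\circ \varphi$ and $\varphi_{\overline{D}}\circ \pi$ are two $k$-algebra maps $R\to (R/I)[|\mu|]_m$ which, by the choice of the $y_i$, coincide on the generators $x_1,\ldots,x_d$; hence they coincide on all of $R$. Taking $r\in I$ yields $\pi_m(\varphi(r))=\varphi_{\overline{D}}(\pi(r))=0$, so $\varphi(r)\in I\cdot R[|\mu|]_m$, i.e. every component $D_n(r)$ lies in $I$. The same commutativity identifies the map induced on $R/I$ by $D$ with $\varphi_{\overline{D}}$, so $\Pi_{\HS,m}^I(D)=\overline{D}$.

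The only step that uses something specific about $R$ is the existence of the lift $\varphi$ with prescribed values on the $x_i$; this is precisely where freeness of $R$ as a commutative $k$-algebra is essential, and explains why the statement is restricted to polynomial rings. Once that lift is in hand, both $I$-logarithmicity and the compatibility with $\overline{D}$ are automatic from the universal property argument above, so I do not anticipate any further obstacle.
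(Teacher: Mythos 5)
Your proof is correct and complete: the paper itself gives no argument for this proposition (it is quoted from \cite[Prop.~1.3.4]{Na2}), and your lifting-of-generators argument via the universal property of the polynomial ring, together with the diagram chase $\pi_m\circ\varphi_D=\varphi_{\overline D}\circ\pi$ establishing both $I$-logarithmicity and $\Pi^I_{\HS,m}(D)=\overline D$ in one stroke, is exactly the standard proof of the cited result. The only cosmetic caveat is that for $m=\infty$ and $I$ not finitely generated the kernel of $\pi_m$ is the set of series with all coefficients in $I$ rather than literally the ideal $I\cdot R[|\mu|]$, but your argument only uses the former description, so nothing breaks.
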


Let $I\subset R$ be an ideal, $A=R/I$, $m\in \overline\N$ and let us denote by $\Pi^I_m: \IDer_k(\log I;m)\rightarrow
\IDer_k(A;m)$ the map defined as:
\begin{equation} \label{eq:1}
\delta \in \IDer_k(\log I;m) \longmapsto \Pi_m^I(\delta)=\overline{\delta} \in \IDer_k(A;m)
\end{equation}
where $\overline{\delta}$ has been defined in (\ref{delta barra}).

The following proposition is clear thanks to \cite[Corollary 2.1.9]{Na2}.

\begin{prop} \label{Anillo polinomios, logaritmico, map sobrey}
Under the above conditions, the following short sequence of $R$-modules is exact:
$$
0\longrightarrow I(\Der_k(R))\longrightarrow \IDer_k(\log
I;m)\xrightarrow{\Pi_m^I} \IDer_k(R/I;m)\longrightarrow 0.
$$
\end{prop}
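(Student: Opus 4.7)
The plan is to establish the three required properties of the sequence separately, leveraging Propositions \ref{HSenpolise extiende} and \ref{HS log es sobreyectivo} as the two main inputs. The classical exact sequence recalled in the excerpt,
\[
0 \longrightarrow I\Der_k(R) \longrightarrow \Der_k(\log I) \longrightarrow \Der_k(A) \longrightarrow 0,
\]
already gives injectivity of the inclusion $I\Der_k(R) \hookrightarrow \IDer_k(\log I;m)$ once that inclusion is justified, since $I\Der_k(R)$ sits inside $\Der_k(\log I)$. So the two substantive tasks are (i) showing $I\Der_k(R) \subseteq \IDer_k(\log I;m)$ and that this exhausts the kernel of $\Pi_m^I$, and (ii) proving surjectivity of $\Pi_m^I$.

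For the kernel computation, I would pick a generator of $I\Der_k(R)$ of the form $a\delta$ with $a\in I$ and $\delta\in \Der_k(R)$. By Proposition \ref{HSenpolise extiende}, the derivation $\delta$ admits an $m$-integral $D \in \HS_k(R;m)$. I would then form the substitution $a\bullet D \in \HS_k(R;m)$ from Example \ref{Ejemplos de sustituciones}(1), whose components are $(a\bullet D)_i = a^i D_i$. Since $a \in I$, for $i\geq 1$ we have $(a\bullet D)_i(R) \subseteq a^i R \subseteq I$; in particular $(a\bullet D)_i(I) \subseteq I$, so $a\bullet D \in \HS_k(\log I;m)$ and its $1$-component is $a\delta$. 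Hence $a\delta \in \IDer_k(\log I;m)$, and since $\IDer_k(\log I;m)$ is an $R$-submodule of $\Der_k(R)$, the inclusion $I\Der_k(R) \subseteq \IDer_k(\log I;m)$ follows. The kernel of $\Pi_m^I$ is then $\IDer_k(\log I;m) \cap \ker(\Der_k(\log I)\to \Der_k(A)) = \IDer_k(\log I;m) \cap I\Der_k(R) = I\Der_k(R)$.

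For surjectivity, let $\overline{\delta} \in \IDer_k(A;m)$ and pick any $m$-integral $\overline{D} \in \HS_k(A;m)$ of $\overline{\delta}$ (so $\overline{D}_1 = \overline{\delta}$). By Proposition \ref{HS log es sobreyectivo}, the group homomorphism $\Pi_{\HS,m}^I$ is surjective, so there exists $D \in \HS_k(\log I;m)$ with $\Pi_{\HS,m}^I(D) = \overline{D}$. Its $1$-component $D_1$ is then an $I$-logarithmic $k$-derivation of $R$, admitting $D$ itself as an $I$-logarithmic $m$-integral; thus $D_1 \in \IDer_k(\log I;m)$. The commutativity of the diagram defining $\Pi_{\HS,m}^I$ gives $\overline{D_1} = \overline{D}_1 = \overline{\delta}$, so $\Pi_m^I(D_1) = \overline{\delta}$.

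No step looks genuinely difficult: the only place requiring care is recognising that the lift at the level of HS-derivations (Proposition \ref{HS log es sobreyectivo}) automatically upgrades integrability from $A$ to $I$-logarithmic integrability on $R$, which is precisely the content of surjectivity; and checking that $(a\bullet D)_i = a^i D_i$ lands in $I$ for $i\geq 1$, which is immediate from $a\in I$.
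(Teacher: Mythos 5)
Your proof is correct. The paper gives no argument of its own for this proposition---it simply declares it clear by citing \cite[Corollary 2.1.9]{Na2}---and your reconstruction follows exactly the route that citation encapsulates: surjectivity of $\Pi^I_{\HS,m}$ at the level of HS-derivations (Proposition \ref{HS log es sobreyectivo}) gives surjectivity of $\Pi^I_m$, while integrability of every HS-derivation of $R$ (Proposition \ref{HSenpolise extiende}) together with the substitution $a\bullet D=(a^iD_i)_i$ for $a\in I$ places $I\Der_k(R)$ inside $\IDer_k(\log I;m)$ and identifies the kernel via the classical exact sequence for $\Der_k(\log I)$.
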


\begin{cor}\label{Saltos del cociente}
Under the above conditions, $A$ has a leap at $s> 1$ if and only if
the inclusion $\IDer_k(\log I;s-1) \supsetneq  \IDer_k(\log I;s) $
is proper.
\end{cor}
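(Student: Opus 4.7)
The plan is to derive this from the exact sequence in Proposition \ref{Anillo polinomios, logaritmico, map sobrey} applied at both lengths $s-1$ and $s$. Since those two sequences share the same kernel $I\Der_k(R)$, properness of the inclusion on the middle terms is equivalent to properness on the quotients, which are exactly the modules $\IDer_k(A;s-1)$ and $\IDer_k(A;s)$.

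First, I would lay out the two exact sequences
\[
0 \longrightarrow I\Der_k(R) \longrightarrow \IDer_k(\log I;m) \xrightarrow{\ \Pi^I_m\ } \IDer_k(A;m) \longrightarrow 0
\]
for $m = s-1$ and $m = s$, and record that truncation of $(s-1)$-integrals from $s$-integrals yields the compatible inclusions $\IDer_k(\log I;s)\subseteq \IDer_k(\log I;s-1)$ and $\IDer_k(A;s)\subseteq \IDer_k(A;s-1)$, which commute with the projections $\Pi^I_s$ and $\Pi^I_{s-1}$. Note also that $I\Der_k(R)$ is contained in $\IDer_k(\log I;s)$ (hence in $\IDer_k(\log I;s-1)$) because it is the kernel of $\Pi^I_s$.

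Second, for the implication ``leap at $s$ $\Rightarrow$ proper inclusion'', I would pick some $\overline{\delta}\in \IDer_k(A;s-1)\setminus \IDer_k(A;s)$, use surjectivity of $\Pi_{s-1}^I$ to lift it to $\delta\in \IDer_k(\log I;s-1)$, and verify that $\delta\notin \IDer_k(\log I;s)$; otherwise $\Pi^I_s(\delta)=\overline{\delta}$ would lie in $\IDer_k(A;s)$, contradicting the choice. For the converse, I would take $\delta\in \IDer_k(\log I;s-1)\setminus \IDer_k(\log I;s)$ and show $\overline{\delta}:=\Pi^I_{s-1}(\delta)\notin \IDer_k(A;s)$: if it were, surjectivity of $\Pi_s^I$ would supply $\delta'\in \IDer_k(\log I;s)$ with $\Pi^I_s(\delta')=\overline{\delta}$, and then $\delta - \delta'\in \ker\Pi^I_{s-1} = I\Der_k(R)\subseteq \IDer_k(\log I;s)$ would force $\delta\in \IDer_k(\log I;s)$, a contradiction.

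There is no real obstacle; the argument is a direct diagram chase using the two exact sequences together with the compatibility of $\Pi^I$ with truncation. The only mildly subtle point is noting that the kernel $I\Der_k(R)$ is the same at both lengths, which is precisely what allows one to transfer the properness of the inclusion back and forth between the logarithmic and the quotient levels.
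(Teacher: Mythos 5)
Your proof is correct and is essentially the argument the paper intends: the corollary is stated as an immediate consequence of the exact sequence in Proposition \ref{Anillo polinomios, logaritmico, map sobrey}, and your diagram chase --- using that the kernel $I\Der_k(R)$ is the same at lengths $s-1$ and $s$ and that the projections are compatible with truncation --- is precisely the omitted verification.
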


\subsection{A decomposition of logarithmic HS-derivations in characteristic $\bf{p>0}$}

In this section, $k$ will be a ring of characteristic $p>0$, i.e. $\mathbb{F}_p \subset k$, $R=k[x_1,\ldots,x_d]$ and $I\subset R$ an ideal.
\medskip

We first recall the following theorem.

\begin{teo}\rm{\cite[{\bf Th. 3.14}]{Ti2}}\label{Extraer una integral de De a partir de D}
Let $e,s\geq 1$ be two integers and $D\in \HS_k(R;ep^s)$ an
$(ep^s-1)-I$-logarithmic HS-derivation with $\ell(D)\geq
e$. Then, there exist an integral $D'\in \HS_k(R;p^s)$ of $D_e$
and an $I$-differential operator $H$ of order $\leq p^s$
such that $D'$ is $(p^s-1)-I$-logarithmic and $D_{p^s}'=D_{ep^s}+H$.
\end{teo}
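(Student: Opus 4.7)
Plan: The strategy is to construct the desired integral $D'$ by an iterative reduction, transferring information from $D$ at the $ep^s$-scale down to a HS-derivation at the $p^s$-scale via the inflation operation $(-)[e]$ from Example \ref{Ejemplos de sustituciones}(3), and then correcting the discrepancy modulo $I$-differential operators stage by stage.

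First, note that the Leibniz rule of Definition \ref{DefHS} applied together with the vanishing $D_1=\cdots=D_{e-1}=0$ forces $D_e$ to be a $k$-derivation: indeed in the sum for $D_e(xy)$ only the extremal terms $D_0(x)D_e(y)$ and $D_e(x)D_0(y)$ survive. Since $R=k[x_1,\ldots,x_d]$ is a polynomial ring, Proposition \ref{HSenpolise extiende} guarantees that $D_e$ admits integrals of any finite length, so plenty of candidates $E\in \HS_k(R;p^s)$ with $E_1=D_e$ exist; the real problem is to pick one whose inflation $E[e]$ agrees with $D$ sufficiently well.

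Second, I would compare $D$ with $E[e]$ through the composition $G:=D\circ (E[e])^{\ast}\in \HS_k(R;ep^s)$. By Lemma \ref{l(de la composicion)} together with Proposition \ref{Prop 11 de Na3} (applied to the constant‑coefficient substitution $\mu\mapsto \mu^e$, which gives $(E[e])^{\ast}=E^{\ast}[e]$) the $e$-th components of $D$ and $E[e]$ cancel, so $\ell(G)>e$. Then, iterating: at each step I would use Proposition \ref{Anillo polinomios, logaritmico, map sobrey} (valid because $R$ is a polynomial ring) to lift a chosen derivation into $\IDer_k(\log I;p^s)$ modulo $I\Der_k(R)$, and the freedom provided by Lemma \ref{La ultima componente difiere en una derivacion} together with the commutation Lemma \ref{Composicion conmuta con las derivaciones en la ultima componente} to modify the newly added component by a derivation. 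Running the iteration up through positions $e,2e,\ldots,(p^s-1)e$ produces an integral $D'\in \HS_k(R;p^s)$ of $D_e$ which is $(p^s-1)-I$-logarithmic, and ensures that $D\circ(D'[e])^{\ast}$ has its components at all positions $<ep^s$ equal to $I$-differential operators.

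Third, to identify $D_{ep^s}$ with $D'_{p^s}$ modulo an $I$-differential operator, I would read off the top component of $D\circ(D'[e])^{\ast}$ using Lemma \ref{lema-resumen}(d),(e),(f): since $D$ is $(ep^s-1)-I$-logarithmic and $(D'[e])$ is $(ep^s-1)-I$-logarithmic by Lemma \ref{lema-resumen}(a), their composition's top component equals $D_{ep^s}-(D'[e])_{ep^s}$ plus an $I$-differential operator of order $\leq ep^s$. Because $(D'[e])_{ep^s}=D'_{p^s}$ by the very definition of $(-)[e]$ in Example \ref{Ejemplos de sustituciones}(3), this yields $D'_{p^s}=D_{ep^s}+H$ with $H$ an $I$-differential operator.

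The main obstacle, as I see it, is the two‑scale nature of the statement: the hypotheses sit at the scale $ep^s$ but the conclusion lives at the scale $p^s$, and the bridge between the two is only compatible \emph{up to $I$-differential operators}. Concretely, at each stage of the iteration one must modify $E$ by something whose effect on $E[e]$ interacts correctly with the $I$-logarithmic correction, and this only works because the correction maps $(-)[e]$ and $(-)^\ast$ commute in a controlled way (Proposition \ref{Prop 11 de Na3}), because the kernel in Proposition \ref{Anillo polinomios, logaritmico, map sobrey} is exactly $I\Der_k(R)$, and because characteristic $p>0$ together with the arithmetic $p^s\cdot e=ep^s$ keeps the top‑component arithmetic clean. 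Carrying this bookkeeping through all intermediate indices is the technical heart of the argument.
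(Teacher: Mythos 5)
First, a point of reference: the paper does not prove this statement at all — it is imported verbatim from \cite[Th.~3.14]{Ti2} — so there is no internal proof to compare against, and your proposal has to stand on its own. It does not: the core mechanism fails from the third stage of your iteration onwards. Your only tool for correcting the partial integral is to modify its newly added component by a derivation (Lemma \ref{La ultima componente difiere en una derivacion}), but the discrepancy that must be absorbed is not a derivation once the components of $D$ at indices \emph{not} divisible by $e$ enter the picture. Concretely, put $G=D\circ(D'[e])^{\ast}$ and suppose you have arranged $D'_1=D_e$ and $D'_2=D_{2e}$ (this much is fine: for $e\le j<2e$ each $D_j$ is a derivation, and the Leibniz rule for $D_{2e}$ only involves the pairs $(0,2e),(e,e),(2e,0)$). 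A direct computation gives $G_{3e}=D_{3e}-D'_3$, while $G_a=D_a$ for $e<a<2e$, so the Leibniz defect of $G_{3e}$ is $\sum_{a+b=3e,\,e<a,b<2e}D_a(x)D_b(y)$, which is nonzero whenever $D$ has nonzero components strictly between $e$ and $2e$ (nothing in $\ell(D)\ge e$ excludes this). Hence $D_{3e}-D'_3$ is not a derivation for \emph{any} admissible choice of $D'_3$, and no adjustment by a derivation can achieve the matching your iteration requires; the correction actually needed is a second-order operator built from integrals of the components $D_a$ with $e\nmid a$ (for $p=2$, $e=2$, one must subtract the second component of an integral of $D_3$). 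Your plan never engages with these off-multiple components, and peeling them off while preserving $(ep^s-1)$-$I$-logarithmicity is precisely the technical heart of the theorem.

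Second, the invariant you propagate — that all components of $D\circ(D'[e])^{\ast}$ at positions $<ep^s$ are $I$-differential operators — is automatic once both factors are $(ep^s-1)$-$I$-logarithmic (Lemma \ref{lema-resumen}(b),(d)), so it carries no information; in particular it says nothing about the component at position $ep^s$ itself. Your third step needs exactly that $\bigl(D\circ(D'[e])^{\ast}\bigr)_{ep^s}$ be a derivation plus an $I$-differential operator in order to conclude $D'_{p^s}=D_{ep^s}+H$ with $H(I)\subseteq I$, and this is nowhere established. Note also that the two genuinely nontrivial conclusions — that the $I$-logarithmic derivation $D_e$ is $I$-logarithmically $(p^s-1)$-integrable at all (false for a general element of $\Der_k(\log I)$, as the paper's own Counterexample \ref{Contraejemplo} shows), and that the top component can be pinned down to $D_{ep^s}+H$ — must both be extracted from the higher components of $D$; your appeals to Proposition \ref{HSenpolise extiende} and Proposition \ref{Anillo polinomios, logaritmico, map sobrey} presuppose rather than prove them. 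A smaller omission: to get $\ord H\le p^s$ rather than $\le ep^s$ you would also need the fact that $\ell(D)\ge e$ forces $\ord D_n\le\lfloor n/e\rfloor$, which you never invoke.
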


\begin{notacion}\label{Definicion de Cpies}
Let $i,l\geq 1$ be two positive integers such that $i<p^l$. We
define
$
C_{i,l}^p:=\{j\in \mathbb N|\mbox{ } ip^j<p^l\}.
$
\end{notacion}

The proof of the following lemma is clear.

\begin{lem}\label{Cp y potencia de p}
Let $i,l\geq 1$ be two positive integers such that $i<p^l$ and $i$ is not
a power of $p$ and denote $s=\max C_{i,l}^p$. Then $ip^{s+1}>p^l$.
\end{lem}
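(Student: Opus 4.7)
The plan is a short maximality argument together with a single use of the hypothesis that $i$ is not a power of $p$. First I would verify that the set $C_{i,l}^p$ is a well-defined nonempty finite set of nonnegative integers, so that its maximum $s$ exists. Indeed, $0 \in C_{i,l}^p$ because $i p^0 = i < p^l$ by hypothesis, and any $j \in C_{i,l}^p$ satisfies $p^j \leq i p^j < p^l$, forcing $j < l$. Hence $0 \leq s \leq l-1$.

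By the very definition of $s$ as the maximum of $C_{i,l}^p$, the integer $s+1$ does not lie in $C_{i,l}^p$, and this immediately yields the weak inequality $i p^{s+1} \geq p^l$. The only remaining task is to rule out equality. Suppose, for contradiction, that $i p^{s+1} = p^l$. Since $s \leq l-1$, we have $l - s - 1 \geq 0$, and the equation rearranges to $i = p^{l-s-1}$, exhibiting $i$ as a nonnegative integer power of $p$. This contradicts the hypothesis, so the inequality must in fact be strict, as claimed.

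The main (and essentially only) obstacle is conceptual rather than technical: one must notice that the hypothesis ``$i$ is not a power of $p$'' is used exactly to exclude the borderline case $i p^{s+1} = p^l$, which by unique factorization forces $i$ itself to be a power of $p$ (with the usual convention that $p^0 = 1$ counts as such). Care is only needed to confirm $l - s - 1 \geq 0$ so that $p^{l-s-1}$ is a genuine nonnegative power.
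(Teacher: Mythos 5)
Your proof is correct and follows the only natural route: the paper itself omits the argument, declaring the lemma clear, and what you write (maximality of $s$ gives $ip^{s+1}\geq p^l$, and equality would force $i=p^{l-s-1}$, contradicting the hypothesis) is exactly the intended justification. The preliminary checks that $C_{i,l}^p$ is nonempty and that $s\leq l-1$ are welcome additions.
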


\begin{notacion}
Let $l\geq 1$ be an integer and $D\in \HS_k(R;p^l)$. We define:
$$\J(l,D):=\{j\in\N\ |\ \ell(D)\leq j\leq p^l, \mbox{ }p\nmid j\}.
$$
Note that if $E\in \HS_k(R;p^l)$ such that $\ell(E)\leq \ell(D)$,
then $\J(l,D)\subseteq \J(l,E)$ and $\J(l,E)\setminus
\J(l,D)=\{j\in\N\ |\ \ell(E)\leq j< \ell(D),\mbox{ } p\nmid j\}$.
For each family $F^j\in \HS_k(R;m)$, $j\in \J(l,D)$, we will write:
$$
\circ_{j\in \J(l,D)} F^j=F^{p^l-1}\circ \cdots \circ F^{\ell(D)}
$$
(observe that we have chosen the decreasing ordering),
where $F^j=\mathbb{I}$ if $j\not\in\J(l,D)$.
\end{notacion}

\begin{prop}\label{Descomposicion detallada de HS en parte no log y partes log}
Let $l>0$ be an integer and let us denote $s_j:=\max C_{j,l}^p$ for each integer $j$ with $1\leq j \leq p^l$. Then, for any $\left(p^l-1\right)-I$-logarithmic HS-derivation
$D\in \HS_k(R;p^l)$ with $\ell(D)>1$, there exist:
\begin{itemize}
\item a $(p^{l-1}-1)-I$-logarithmic
HS-derivation $T\in \HS_k(R;p^{l-1})$,
\item a $(p^{s_j+1}-1)-I$-logarithmic HS-derivation $F^j\in
\HS_k(R;p^{s_j+1})$, for each $j\in \J(l,D)$, and
\item an $I$-differential operator $H$ of order $\leq p^l$
\end{itemize}
such that $T_{p^{l-1}}=D_{p^l}+H$ and
$$
D=T[p]\circ \left(\circ_{j\in \J(l,D)} \left(\psi^j\bullet
F^j\right)\right),
$$
where $\psi^j:R[|\mu|]_{p^{s_j+1}} \rightarrow R[|\mu|]_{p^l}$ is the substitution map given by $\psi^j(\mu)= \mu^j$.
\end{prop}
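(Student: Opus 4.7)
I would prove this decomposition by iteratively peeling off the non-$p$-multiple indices of $D$, processing $\J(l,D)=\{j_1<j_2<\cdots<j_r\}$ in increasing order. Set $D^{(0)}:=D$ and define recursively $D^{(k)}:=D^{(k-1)}\circ(\psi^{j_k}\bullet F^{j_k})^{\ast}$, where $F^{j_k}\in\HS_k(R;p^{s_{j_k}+1})$ is a $(p^{s_{j_k}+1}-1)$-$I$-logarithmic HS-derivation with $F^{j_k}_1=D^{(k-1)}_{j_k}$. The strategy is to show that after $r$ iterations, $D^{(r)}$ has zero components at every non-$p$-multiple index in $[1,p^l]$, so $D^{(r)}=T[p]$ for a unique $T\in\HS_k(R;p^{l-1})$ with $T_i:=D^{(r)}_{ip}$; unwinding the recursion then yields the decomposition $D=T[p]\circ(\psi^{j_r}\bullet F^{j_r})\circ\cdots\circ(\psi^{j_1}\bullet F^{j_1})$.

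At each step two ingredients are needed: (i) $D^{(k-1)}_{j_k}$ is an $I$-logarithmic $k$-derivation, and (ii) it admits an $I$-logarithmic integral of length $p^{s_{j_k}+1}-1$. For (i), the Leibniz expansion at index $j_k$ has every cross term $D^{(k-1)}_a(x)\,D^{(k-1)}_{j_k-a}(y)$ (with $0<a<j_k$) equal to zero: since $p\nmid j_k$, at least one of $a$ or $j_k-a$ is coprime to $p$ and hence lies either in $\{j_1,\dots,j_{k-1}\}$ (killed at an earlier step, by induction) or strictly below $\ell(D)$ (zero by construction). The $I$-logarithmicity of $D^{(k-1)}_{j_k}$ follows from $D^{(k-1)}$ being $(p^l-1)$-$I$-log---preserved under the iteration by Lemma~\ref{lema-resumen} since each $\psi^{j_i}\bullet F^{j_i}$ and its inverse are fully $I$-log in $\HS_k(R;p^l)$---together with $j_k<p^l$. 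For (ii), I would apply Theorem~\ref{Extraer una integral de De a partir de D} with $e=j_k$ and $s=s_{j_k}$ (to an appropriate truncation) to obtain an $I$-log integral of length $p^{s_{j_k}}$, lift it to length $p^{s_{j_k}+1}-1$ using Theorem~\ref{Teo Integrabilidad} and Corollary~\ref{Saltos del cociente} (leaps occur only at $p$-powers, so $\IDer_k(\log I;m)$ is constant on $[p^{s_{j_k}},p^{s_{j_k}+1}-1]$), and finally extend by one more (not necessarily $I$-log) component via Proposition~\ref{HSenpolise extiende} to produce $F^{j_k}$.

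A direct computation using Proposition~\ref{Prop 11 de Na3} and the formula $(\psi^{j_k}\bullet(F^{j_k})^\ast)_b=(F^{j_k})^\ast_{b/j_k}$ when $j_k\mid b$ (zero otherwise) confirms $D^{(k)}_{j_k}=0$ and preserves the earlier cancellations $D^{(k-1)}_{j_i}=0$ for $i<k$, since any $b\geq j_k$ forces $j_i-b<0$. After $r$ iterations, $D^{(r)}$ is supported on $p$-multiples, so the Leibniz axiom for $D^{(r)}$ translates into that for $T$; that $T$ is $(p^{l-1}-1)$-$I$-log and satisfies $T_{p^{l-1}}=D_{p^l}+H$ with $H$ an $I$-differential operator of order $\leq p^l$ follows by accumulating the corrections at the top component through parts (d), (e), (f) of Lemma~\ref{lema-resumen}, each iteration contributing only $I$-differential-operator corrections. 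The main obstacle is securing (ii) when $k>1$: Theorem~\ref{Extraer una integral de De a partir de D} requires $\ell(D^{(k-1)})\geq j_k$, which may fail because nonzero components of $D^{(k-1)}$ at $p$-multiples in $(j_{k-1},j_k)$ can persist; bridging this gap requires either passing to an auxiliary HS-derivation sharing the $j_k$-component with $D^{(k-1)}$ but with $\ell\geq j_k$, or establishing (ii) by an additional induction on $k$ controlling the difference $D^{(k-1)}_{j_k}-D_{j_k}$ modulo $\IDer_k(\log I;p^{s_{j_k}+1}-1)$.
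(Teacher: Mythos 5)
Your overall shape is close to the paper's -- peel off one factor per index, invert with Proposition \ref{Prop 11 de Na3}, and argue that what survives is supported on multiples of $p$ and hence equals $T[p]$ -- but the order in which you process indices creates a gap that you yourself flag and do not close, and it is not a minor one. Your step (ii) needs $D^{(k-1)}_{j_k}$ to be $I$-logarithmically $(p^{s_{j_k}+1}-1)$-integrable, and when $s_{j_k}\geq 1$ (i.e.\ $j_k<p^{l-1}$) the only available tool is Theorem \ref{Extraer una integral de De a partir de D}, whose hypothesis is $\ell(\cdot)\geq e=j_k$. Because you skip all indices divisible by $p$, the derivation $D^{(k-1)}$ can retain nonzero components at $p$-multiple indices strictly below $j_k$; this already happens at $k=1$ whenever $\ell(D)$ itself is a multiple of $p$, so the problem is not confined to $k>1$ as you state. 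Neither of your two suggested repairs is carried out, and the second one (controlling $D^{(k-1)}_{j_k}$ modulo $\IDer_k(\log I;p^{s_{j_k}+1}-1)$) is essentially the whole difficulty of the proposition restated, not a bridge over it. The parts of your argument that do work -- the vanishing of the cross terms in the Leibniz expansion at $j_k$, the preservation of earlier cancellations under right composition, and the identification $D^{(r)}=T[p]$ -- are fine, but they are the easy half.

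The paper closes exactly this gap by running a decreasing induction on $\ell(D)$ and always stripping the component at the \emph{lowest} nonzero index $i=\ell(D)$, whatever its arithmetic type. When $i$ is a power of $p$ or a multiple of $p$, the stripped factor is immediately refolded into the $T[p]$ part using $F[p^t]=\left(F\left[p^{t-1}\right]\right)[p]$, resp.\ $\psi^i\bullet F=\tau_{ip^{s_i},p^{l-1}}(F[i/p])[p]$, together with Lemma \ref{lema-resumen} (f) to keep track of the top component; only when $p\nmid i$ does the factor become one of the $F^j$. This guarantees that every application of Theorem \ref{Extraer una integral de De a partir de D} is made to a derivation whose $\ell$ equals the index being integrated, so its hypothesis is always satisfied. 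If you want to salvage your increasing-order iteration, you must interleave the removal of the $p$-multiple components in the same way; as written, the proof is incomplete.
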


\begin{proof}  First, note that $\psi^j$ is well-defined for
all $j\in \J(l,D)$ because $jp^{s_j+1}\geq p^l$ by definition of
$s_j$. Moreover, observe that $\psi^j\bullet
E=\tau_{jp^{s_j+1},p^l}(E[j])$ for any $E\in \HS_k(R;p^{s_j+1})$. If
$\ell(D)=\infty$, then $D=\mathbb{I}$, $\J(l,D)=\emptyset$ and we may take
$T=\mathbb{I}$ to obtain the result. Let us suppose that $\ell(D)$ is finite, i.e.
$1<\ell(D)\leq p^l$. We proceed by decreasing induction
on $\ell(D)$.
\smallskip

Assume that $\ell(D)=p^l$. Then, $\J(l,D)=\emptyset$ and
$D=(\Id,\delta)[p^l]=(\Id,\delta)\left[p^{l-1}\right][p]$ (see
Corollary \ref{La truncation y el desplazamiento iterado}). So, if
we put $T:=(\Id,\delta)\left[p^{l-1}\right]$, we have the result.
Let us suppose that the proposition is true for all HS-derivations such that $\ell(\ast)>i$ and let us take
a $(p^l-1)-I$-logarithmic HS-derivation $D\in \HS_k(R;p^l)$ with
$1<\ell(D)=i<p^l$. We divide the proof in two cases:
\begin{itemize}
\item[1.]{\it  If $i$ is a power of $p$.}
\end{itemize}
Let us write $i=p^t$ where $t<l$. Since $\ell(D)>1$, then $t\geq 1$
and we can see $D\in \HS_k(R;p^tp^{l-t})$. By Theorem \ref{Extraer
una integral de De a partir de D}, there exist an integral $F\in
\HS_k(R;p^{l-t})$ of $D_{p^t}$ and an $I$-differential operator $H$
of order $\leq p^{l-t}$ such that $F$ is $(p^{l-t}-1)-I$-logarithmic
and $F_{p^{l-t}}=D_{p^l}+H$. Then, by Lemma \ref{lema-resumen}, (c),
and Proposition \ref{Prop 11 de Na3}, $F^\ast[p^t]=(F[p^t])^\ast\in
\HS_k(R;p^l)$ is $(p^l-1)-I$-logarithmic. Moreover,
$(F[p^t])^\ast_{p^t}=F^\ast_1=-D_{p^t}$ and, by Lemma
\ref{lema-resumen}, (e),
$(F[p^t])^\ast_{p^l}=F^\ast_{p^{l-t}}=-D_{p^l}-H+E$ where $E$ is an
$I$-differential operator of order $\leq p^{l-t}$. We define
$D':=(F[p^t])^\ast\circ D$. By Lemma \ref{l(de la composicion)},
$\ell(D')>i=p^t$ and, by Lemma \ref{lema-resumen}, (d), $D'$ is
$(p^l-1)-I$-logarithmic and $D'_{p^{l}}=D_{p^l}-D_{p^l}+H'=H'$ where
$H'$ is an $I$-differential operator of order $\leq p^l$. So, $D'\in
\HS_k(\log I;p^l)$. We apply the induction hypothesis to $D'$ and we
obtain that
$$
D'=T'[p]\circ \left(\circ_{j\in \J(l,D')} \left( \psi^j\bullet
F^j\right)\right)
$$
where $F^j\in \HS_k(R;p^{s_j+1})$ is $(p^{s_j+1})-I$-logarithmic
for $j\in\J(l,D')$ and $T'\in \HS_k\left(R;p^{l-1}\right)$ is
$\left(p^{l-1}-1\right)-I$-logarithmic with
$T'_{p^{l-1}}=D'_{p^l}+\mbox{some $I$-diff. op. of order} \leq p^l$. Since $D'\in
\HS_k(\log I;p^l)$, we have that $T'\in \HS_k(\log I;p^{l-1})$. We
put $F^j=\II\in \HS_k(\log I;p^{s_j+1})$ for all $j\in
\J(l,D)\setminus \J(l,D')$.  By Corollary \ref{La truncation y el
desplazamiento iterado},
$$
D=F[p^t]\circ T'[p]\circ \left(\circ_{j\in
\J(l,D)}\left(\psi^j\bullet
F^j\right)\right)=\left(F\left[p^{t-1}\right]\circ T'\right)[p]\circ
\left(\circ_{j\in \J(l,D)}\left(\psi^j\bullet F^j\right)\right).
$$
By Lemma \ref{lema-resumen}, (a), $F[p^{t-1}]$ is
$(p^{l-1}-1)-I$-logarithmic. Moreover,
$F[p^{t-1}]_{p^{l-1}}=F_{p^{l-t}}=D_{p^l}+H$. So, by Lemma
\ref{lema-resumen}, (f),
$T:=F\left[p^{t-1}\right]\circ T'\in \HS_k(R;p^{l-1})$ is
$(p^{l-1}-1)-I$-logarithmic and
$T_{p^{l-1}}=F[p^{t-1}]_{p^{l-1}}+\mbox{ some } I\mbox{-diff. op. of order }\leq p^l=D_{p^l}+\mbox{some $I$-diff. op. of order }\leq p^l$ and we have
the proposition in this case.
\begin{itemize}
\item[2.] {\it If $i$ is not a power of $p$.}
\end{itemize}
Since $i$ is not a power of $p$, by Lemma \ref{Cp y potencia de p},
$ip^{s_i+1}>p^l$ where $s_i=\max C^p_{i,l}$. We consider
$\tau_{p^l,ip^{s_i}}(D)\in \HS_k(\log I;ip^{s_i})$. If $s_i\geq 1$,
then $D_i$ is $I$-logarithmically $p^{s_i}$-integrable by  Theorem
\ref{Extraer una integral de De a partir de D}. If $s_i=0$, then
$D_i\in \Der_k(\log I)$. In both cases, since leaps only occur at
powers of $p$ (Theorem \ref{Teo Integrabilidad} and Corollary
\ref{Saltos del cociente}), we have that $D_i$ is
$I$-logarithmically $(p^{s_i+1}-1)$-integrable. Thanks to
Proposition \ref{HSenpolise extiende}, we can integrate any
$I$-logarithmic $(p^{s_i+1}-1)$-integral of $D_i$ so,  there exists
$F\in \HS_k(R;p^{s_i+1})$ a $(p^{s_i+1}-1)-I$-logarithmic integral
of $D_i$. Then, by Lemma \ref{lema-resumen} (c),
$F^\ast[i]\in\HS_k(R;ip^{s_i+1})$ is $ip^{s_i+1}-I$-logarithmic. By
Proposition \ref{Prop 11 de Na3}, $\psi^i\bullet
F^\ast=\left(\psi^i\bullet F\right)^\ast\in \HS_k(\log I;p^l)$ and
$(\psi^i \bullet F)^\ast_i=F[i]^\ast_i=-D_i$.

\begin{itemize}
\item[a.] If $i\not\equiv 0\mod p$, then by Lemma \ref{lema-resumen} (f),
and Lemma \ref{l(de la composicion)}, $D':=D\circ
(\psi^i\bullet F)^\ast$ is $(p^l-1)-I$-logarithmic with $\ell(D')>i$
where $D'_{p^l}=D_{p^l}+H$ with $H$ an $I$-differential
operator of order $\leq p^l$. We apply the induction hypothesis to $D'$ and we obtain
that
$$
D'=T[p]\circ \left(\circ_{j\in \J(l,D')}\left( \psi^j\bullet
F^j\right)\right) \Rightarrow D=T[p]\circ \left(\circ_{j\in
\J(l,D')}\left( \psi^j\bullet F^j\right)\right) \circ(\psi^i \bullet
F)
$$
where $T\in \HS_k(R;p^{l-1})$ is $(p^{l-1}-1)-I$-logarithmic with
$T_{p^l-1}=D'_{p^l}+\mbox{some }I\mbox{-diff. op. of order }\leq p^l=D_{p^l}+H'$ where
$H'$ is an $I$-differential operator of order $\leq p^l$. Then, we put
$F^i=F\in \HS_k(R;p^{s_i+1})$ and $F^j=\II\in \HS_k(\log
I;p^{s_j+1})$ for $j\in \J(l,D)\setminus (\J(l,D') \cup \{i\})$ and
we have the result.
\item[b.] If $i$ is a multiple of $p$, then by Lemma \ref{lema-resumen} (d), and Lemma \ref{l(de la composicion)},
$D':=\left(\psi^i\bullet F\right)^\ast\circ D$ is
$(p^l-1)-I$-logarithmic with $\ell(D')>i$ and $D'_{p^l}=D_{p^l}+H$
where $H$ is an $I$-differential operator of order $\leq p^l$. Then, we
apply the induction hypothesis to $D'$ and we have that
$$
D=\left(\psi^i\bullet F\right)\circ T'[p] \circ \left(\circ_{j\in
\J(l,D')} \left(\psi^j \bullet F^j \right)\right)
$$
where $T'\in \HS_k(R;p^{l-1})$ is $(p^{l-1}-1)-I$-logarithmic with
$T'_{p^{l-1}}=D'_{p^l}+\mbox{some $I$-diff. op. of order }\leq p^l=D_{p^l}+H'$
where $H'$ is an $I$-differential operator of order $\leq p^l$. We put
$F^j=\II$ for all $j\in \J(l,D)\setminus \J(l,D')$. On the other
hand, by Corollary \ref{La truncation y el desplazamiento iterado}
and Lemma \ref{Relaciones entre operaciones},
$$
\psi^i\bullet
F=\tau_{ip^{s_i+1},p^l}(F[i])=\tau_{ip^{s_i+1},p^l}(F[i/p][p])=
\tau_{ip^{s_i},p^{l-1}}(F[i/p])[p]
$$
Since $F$ is $(p^{s_i+1}-1)-I$-logarithmic, $F[i/p]$ is
$(ip^{s_i}-1)-I$-logarithmic by Lemma \ref{lema-resumen} (a), and,
since $ip^{s_i}>p^{l-1}$, $\tau_{{ip^s_i},p^{l-1}}(F[i/p])\in
\HS_k(\log I,p^{l-1})$. By Lemma \ref{lema-resumen} (f),
$T:=\tau_{ip^{s_i},p^{l-1}}(F[i/p])\circ T'$ is
$(p^{l-1}-1)-I$-logarithmic and $T_{p^{l-1}}=T'_{p^{l-1}}+\mbox{
some $I$-diff. op. of order }\leq p^l=D_{p^l}+H''$ where $H''$ is an $I$-differential
operator of order $\leq p^l$. Since $ D=T[p]\circ \left(\circ_{j\in
\J(l,D)} \left(\psi^j\bullet F^j\right)\right)$, we have the
proposition.
\end{itemize}
\end{proof}

\begin{cor}\label{Descomposicion de HS en partes no log y log}
Let $l\geq 1$ be an integer and $D\in \HS_k(R;p^l)$ a $(p^l-1)-I$-logarithmic HS-derivation with $\ell(D)>1$. Then, there exist $F\in
\HS_k(\log I;p^l)$ with $\ell(F)>1$ and a
$(p^{l-1}-1)-I$-logarithmic HS-derivation $T\in \HS_k(R;p^{l-1})$  such that
$D=T[p]\circ F$.
\end{cor}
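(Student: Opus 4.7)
The plan is to derive the corollary directly from the detailed decomposition already provided by Proposition \ref{Descomposicion detallada de HS en parte no log y partes log}. That proposition supplies a $(p^{l-1}-1)-I$-logarithmic $T\in\HS_k(R;p^{l-1})$, HS-derivations $F^j\in\HS_k(R;p^{s_j+1})$ which are $(p^{s_j+1}-1)-I$-logarithmic for each $j\in\J(l,D)$, and the identity $D=T[p]\circ\left(\circ_{j\in\J(l,D)}(\psi^j\bullet F^j)\right)$. Setting $F:=\circ_{j\in\J(l,D)}(\psi^j\bullet F^j)$, the only things left to prove are that $F$ is \emph{fully} $I$-logarithmic (i.e.\ lies in $\HS_k(\log I;p^l)$) and that $\ell(F)>1$.

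For the logarithmicity, I would show that each individual factor $\psi^j\bullet F^j$ already lies in $\HS_k(\log I;p^l)$, from which the claim follows because $\HS_k(\log I;p^l)$ is a subgroup of $\HS_k(R;p^l)$. Since $\psi^j(\mu)=\mu^j$, the definition of the bullet action gives $(\psi^j\bullet F^j)_r = F^j_{r/j}$ when $j\mid r$ and zero otherwise. Two cases occur. If $r<p^l$ and $j\mid r$, then by Lemma \ref{Cp y potencia de p} we have $jp^{s_j+1}>p^l$ (applicable because $\ell(D)>1$ forces $j\geq\ell(D)\geq 2$ with $p\nmid j$, so $j$ is not a power of $p$), hence $r/j<p^l/j<p^{s_j+1}$; the $(p^{s_j+1}-1)-I$-logarithmicity of $F^j$ then gives $F^j_{r/j}(I)\subseteq I$. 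If $r=p^l$, then since the only divisors of $p^l$ are $1,p,\dots,p^l$ and $j\geq 2$ with $p\nmid j$, we get $j\nmid p^l$, so the component is simply zero.

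For $\ell(F)>1$, each $\psi^j\bullet F^j$ has its nonzero components concentrated at positive multiples of $j$, so $\ell(\psi^j\bullet F^j)\geq j\geq\ell(D)>1$; iterating Lemma \ref{l(de la composicion)} yields $\ell(F)\geq\min_{j\in\J(l,D)}\ell(\psi^j\bullet F^j)>1$. The main subtlety of the argument is the second case in the logarithmicity check: a priori the top component $F^j_{p^{s_j+1}}$ of $F^j$ need not preserve $I$, and one must rule out its appearance inside $\HS_k(R;p^l)$. The number-theoretic observation that $j$ is coprime to $p$ and strictly larger than $1$, and hence does not divide $p^l$, is precisely what guarantees this component never enters the truncated product, and is the key point making the corollary an immediate consequence of the preceding proposition.
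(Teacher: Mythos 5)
Your proof is correct and follows essentially the same route as the paper: apply Proposition \ref{Descomposicion detallada de HS en parte no log y partes log}, set $F:=\circ_{j\in \J(l,D)}(\psi^j\bullet F^j)$, and check that each factor lies in $\HS_k(\log I;p^l)$ via the inequality $jp^{s_j+1}>p^l$ from Lemma \ref{Cp y potencia de p} (the paper packages your component-wise check as Lemma \ref{lema-resumen} (a) applied to $F^j[j]$, which makes your separate case $r=p^l$ unnecessary), together with Lemma \ref{l(de la composicion)} for $\ell(F)>1$.
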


\begin{proof} From Proposition \ref{Descomposicion detallada
de HS en parte no log y partes log}, we have that
$$
D=T[p]\circ \left(\circ_{i\in \J(l,D)} \left(\psi^i\bullet
F^i\right)\right)
$$
for some $(p^{l-1}-1)-I$-logarithmic HS-derivation  $T\in
\HS_k(R;p^{l-1})$ and some $(p^{s_i+1}-1)-I$-logarithmic
HS-derivations $F^i\in \HS_k(R;p^{s_i+1})$, for $i\in \J(l,D)$ and
$s_i=\max C^p_{i,l}$. Since $\psi^i\bullet
F^i=\tau_{ip^{s_i+1},p^l}(F^i[i])$ and $F^i[i]$ is
$(ip^{s_i+1}-1)-I$-logarithmic by Lemma \ref{lema-resumen} (a),
then $\psi^i\bullet F^i\in \HS_k(\log I;p^l)$ because $i\not\equiv
0\mod p$ and, by Lemma \ref{Cp y potencia de p}, $ip^{s_i+1}>p^l$.
Hence, $F:=\circ_{i\in \J(l,D)}\left( \psi^i\bullet F^i\right)\in
\HS_k(\log I;p^l)$. Moreover, $\ell(F^i[i])>1$ for all $j\in \J(l,D)$, so
$\ell(\psi^i\bullet F^i)>1$ and $\ell(F)>1$ by Lemma \ref{l(de la
composicion)}.
\end{proof}

\section{Base change}

Let $k$ be a commutative ring, $k \to L$ a ring extension
and $A$ a commutative finitely generated $k$-algebra. We denote
$A_L=L\otimes_k A$. In this section, we study the relationship
between $\IDer_k(A;m)$ and $\IDer_L(A_L;m)$ under suitable hypotheses on the ring extension $k \to L$.

\subsection{Base change for derivations}

For any commutative $k$-algebra $A$, let us denote $A_L:= L\otimes_k A$. For each $k$-derivation $\delta:A \to A$ let us denote by $\wdelta: A_L \to A_L$ the natural $L$-linear extension given by $\wdelta(c \otimes a) = c \otimes \delta(a)$ for all $c\in L$ and all $a\in A$, which is a $L$-derivation. The map
$ \delta \in \Der_k(A) \mapsto \wdelta \in \Der_L(A_L)$, being $A$-linear, gives rise to an $A_L$-linear base change map:
\begin{equation*}
\begin{array}{rccc}
\Phi^{L,A}:&L\otimes_k \Der_k(A) = A_L\otimes_A \Der_k(A)&\longrightarrow&\Der_L(A_L)\\
&c\otimes\delta&\longmapsto&c\wdelta.
\end{array}
\end{equation*}
The above map can be also described through the base change isomorphism for the module of differential forms
$ A_L \otimes_A\Omega_{A/k} = L \otimes_k \Omega_{A/k} \stackrel{\sim}{\to} \Omega_{A_L/L}$, namely:
$$ L\otimes_k \Der_k(A) \simeq L\otimes_k \Hom_A(\Omega_{A/k},A) \longrightarrow \Hom_{A_L}(\Omega_{A/k},A_L) \simeq \Hom_{A_L}(A_L \otimes_A\Omega_{A/k},A_L) \simeq \Der_L(A_L).
$$

If $I\subseteq A$ is an ideal, the map $\Phi^{L,A}:L\otimes_k\Der_k(A)\to \Der_L(A_L)$ induces new $A_L$-linear maps:
$$
\begin{array}{ccc}
\Phi^{L,A}_{\text{\rm ind}}: L\otimes_k (I (\Der_k(A)))\to
I^e\Der_L(A_L)&\mbox{ and } &\Phi^{L,A}_{\text{\rm ind}}: L\otimes_k
\Der_k(\log I)\to \Der_L(\log I^e).
\end{array}
$$

When $A=R=k[x_1,\dots,x_d]$ is a polynomial ring, then $R_L=L[x_1,\dots,x_d]$ is also a polynomial ring and since the module of derivations of a polynomial ring in a finite number of variables is free with basis the partial derivatives, we deduce that the map $\Phi^{L,R}$ is an isomorphism for an arbitrary ring extension $k \to L$.
\medskip

We denote
$I^e=IR_L=IL[x_1,\ldots,x_d]$ the extended ideal of $I$ in $R_L$. It is clear that the following diagram is commutative:
\begin{equation}  \label{eq:diag-base-change-derlog}
\begin{tikzcd}
 &
L \otimes_k \left(I (\Der_k(R))\right) \ar[]{r}{} \ar[]{d}{\Phi^{L,R}_{\text{\rm ind}}}  & L \otimes_k \Der_k(\log I) \ar[]{r}{} \ar[]{d}{\Phi^{L,R}_{\text{\rm ind}}} & L \otimes_k \Der_k(A)  \ar[]{r}{}  \ar[]{d}{\Phi^{L,A}} &0 \\
0 \ar[]{r}{} & I^e \Der_L(R_L)  \ar[]{r}{} &  \Der_L(\log I^e)
\ar[]{r}{} & \Der_L(A_L) \ar[]{r}{} & 0.
\end{tikzcd}
\end{equation}
Moreover, it
has exact rows and the left vertical arrow is surjective, and if $L$ is flat over $k$, then the top row is also left exact, the left vertical arrow is bijective and the middle vertical arrow is injective.
\medskip

\begin{prop} \label{prop:equiv-base-extension-log-der}
Under the above hypotheses, if $k \to L$ is a flat ring extension, then the following properties are equivalent:
\begin{enumerate}
\item[(a)] The map $\Phi^{L,R}_{\text{\rm ind}}: L \otimes_k \Der_k(\log I) \to \Der_L(\log I^e)$ is an isomorphism.
\item[(b)] The map $\Phi^{L,A}: L \otimes_k \Der_k(A) \to \Der_L(A_L)$ is an isomorphism.
\end{enumerate}
Moreover, both properties hold if $I$ is finitely generated (i.e. if $A$ is finitely presented over $k$).
\end{prop}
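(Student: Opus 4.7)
The plan is to combine the snake lemma applied to diagram (\ref{eq:diag-base-change-derlog}) with a kernel description of $\Der_k(\log I)$ when $I$ is finitely generated.

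First, for the equivalence $(a) \Leftrightarrow (b)$: under flatness of $k \to L$, the text already notes that both rows of (\ref{eq:diag-base-change-derlog}) are short exact and the leftmost vertical arrow $L \otimes_k (I\Der_k(R)) \to I^e\Der_L(R_L)$ is bijective. Writing $\alpha_{\log}$ and $\alpha_A$ for the middle and right vertical maps, the snake lemma yields the exact sequence
$$ 0 \longrightarrow \ker \alpha_{\log} \longrightarrow \ker \alpha_A \longrightarrow 0 \longrightarrow \coker \alpha_{\log} \longrightarrow \coker \alpha_A \longrightarrow 0,
$$
so $\ker \alpha_{\log} \cong \ker \alpha_A$ and $\coker \alpha_{\log} \cong \coker \alpha_A$. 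In particular $\alpha_{\log}$ is an isomorphism if and only if $\alpha_A$ is, which is the desired equivalence.

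For the moreover assertion, by the equivalence just proved it suffices to verify (a) when $I = (g_1, \ldots, g_r)$ is finitely generated. Since $\delta \in \Der_k(R)$ is $I$-logarithmic iff $\delta(g_j) \in I$ for every $j$, there is a left exact sequence of $R$-modules
$$ 0 \longrightarrow \Der_k(\log I) \longrightarrow \Der_k(R) \xrightarrow{\Phi_g} (R/I)^r, \qquad \Phi_g(\delta) := (\overline{\delta(g_1)}, \ldots, \overline{\delta(g_r)}),
$$
and, since $I^e$ is generated in $R_L$ by the same elements $g_1, \ldots, g_r$, the analogue
$$ 0 \longrightarrow \Der_L(\log I^e) \longrightarrow \Der_L(R_L) \xrightarrow{\Phi_g^L} (R_L/I^e)^r
$$
is left exact too. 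Tensoring the first sequence by the flat $k$-module $L$ preserves left exactness; the base change isomorphism $L \otimes_k \Der_k(R) \xrightarrow{\sim} \Der_L(R_L)$ (valid because $R$ is a polynomial ring, as noted in the text above the diagram) together with the natural isomorphism $L \otimes_k R/I \xrightarrow{\sim} R_L/I^e$ (right exactness of $L \otimes_k (-)$) identifies the tensored first sequence with the second one, so comparing kernels gives $L \otimes_k \Der_k(\log I) \cong \Der_L(\log I^e)$, which is property (a).

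The argument is essentially a diagram chase once the right short exact sequences are in place, so no step is genuinely delicate. The only mild verification required is that the base change isomorphisms for $\Der_k(R)$ and for $R/I$ carry $\mathrm{id}_L \otimes \Phi_g$ to $\Phi_g^L$, which is immediate from the definition of $\Phi_g$ and the naturality of the base change map for derivations.
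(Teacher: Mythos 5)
Your proof is correct. For the equivalence $(a)\Leftrightarrow(b)$ you use the snake lemma where the paper invokes the five lemma on diagram (\ref{eq:diag-base-change-derlog}); these are interchangeable here and rest on exactly the same inputs (short exactness of both rows under flatness and bijectivity of the left vertical arrow), so that part is essentially identical. The genuine divergence is in the ``moreover'' assertion: the paper proves (b) directly, following \cite[Prop.\ (16.5.11)]{ega_iv_4}, by using the second fundamental exact sequence to see that $\Omega_{A/k}$ is a finitely presented $A$-module and then pushing $L\otimes_k(-)$ through $\Hom_A(\Omega_{A/k},-)$ via flat base change for Hom of finitely presented modules; you instead prove (a) directly, realizing $\Der_k(\log I)$ as the kernel of $\delta\mapsto(\overline{\delta(g_1)},\dots,\overline{\delta(g_r)})$ on the free module $\Der_k(R)$ and using that flat base change commutes with kernels. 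Both are sound; your route is more self-contained (it needs only flatness preserving left exactness and the already-established isomorphism $L\otimes_k\Der_k(R)\simeq\Der_L(R_L)$, avoiding the finitely-presented-Hom lemma), while the paper's route has the advantage of proving (b) intrinsically in terms of $A$, without reference to the chosen presentation $A=R/I$. Your closing remark that one must check $\mathrm{id}_L\otimes\Phi_g$ is carried to $\Phi_g^L$ is the right point of care, and the verification is indeed immediate.
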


\begin{proof} The equivalence (a) $\Leftrightarrow$ (b) comes from the {\em five lemma}. The last statement is well known (cf. \cite[Prop. (16.5.11)]{ega_iv_4}) but for the sake of completeness we recall its proof:
from the {\em second fundamental exact sequence}
$$ I/I^2 \to A \otimes_R \Omega_{R/k} \to \Omega_{A/k} \to 0
$$
we deduce that, if $I$ is finitely generated, then $\Omega_{A/k}$ is a finitely presented $A$-module and so
\begin{eqnarray*}
& L \otimes_k \Der_k(A) \simeq A_L \otimes_A \Der_k(A) \simeq A_L \otimes_A \Hom_A(\Omega_{A/k},A)
\simeq \Hom_A(\Omega_{A/k},A_L) \simeq
& \\
& \Hom_{A_L}(A_L \otimes_A \Omega_{A/k},A_L) \simeq  \Hom_{A_L}(\Omega_{A_L/L},A_L) \simeq \Der_L(A_L).
\end{eqnarray*}
\end{proof}

We also have the following result for any ideal $I \subset R=k[x_1,\ldots,x_d]$ and for any finitely generated $k$-algebra $A=R/I$.

\begin{prop} \label{prop:free-extension-der}
Under the above hypotheses, if $k \to L$ is a free ring extension ($L$ is a free $k$-module) and $A = R/I$ is a finitely generated $k$-algebra, then properties (a) and (b) in Proposition \ref{prop:equiv-base-extension-log-der} hold.
\end{prop}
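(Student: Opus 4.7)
My plan is to deduce the result from property (a) of Proposition \ref{prop:equiv-base-extension-log-der}, since freeness implies flatness, so the equivalence (a) $\Leftrightarrow$ (b) from that proposition will be available. The reason I cannot simply quote the last sentence of \ref{prop:equiv-base-extension-log-der} is that here the ideal $I\subset R$ is not assumed finitely generated (only $A=R/I$ is assumed finitely generated as a $k$-algebra, which is automatic since $R$ is). Thus the whole point is to replace the finite presentation hypothesis by the hypothesis that $L$ is $k$-free.

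The first step is to fix a $k$-basis $\{e_\alpha\}_{\alpha\in\Lambda}$ of $L$. This gives a direct sum decomposition $L\otimes_k M=\bigoplus_\alpha (e_\alpha\otimes M)$ functorial in any $k$-module $M$, and in particular the inclusion $L\otimes_k I\hookrightarrow L\otimes_k R$ (which already holds by flatness) is identified with the obvious inclusion $\bigoplus_\alpha (e_\alpha\otimes I)\hookrightarrow \bigoplus_\alpha(e_\alpha\otimes R)$. This is the only structural fact I will need from freeness beyond flatness.

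Next I exploit that $\Phi^{L,R}:L\otimes_k\Der_k(R)\to\Der_L(R_L)$ is already an isomorphism (as observed in the text, since $\Der_k(R)$ is free on the partials). Given any $\Delta\in\Der_L(\log I^e)\subseteq\Der_L(R_L)$, I write uniquely $\Delta=\sum_\alpha e_\alpha\,\widetilde{\delta_\alpha}$ with $\delta_\alpha\in\Der_k(R)$ and almost all zero. The goal becomes to check that each $\delta_\alpha$ lies in $\Der_k(\log I)$, which will show that $\Phi^{L,R}_{\text{ind}}$ is surjective, hence bijective (injectivity comes from the diagram \eqref{eq:diag-base-change-derlog} together with flatness). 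This yields (a) and hence (b) by Proposition \ref{prop:equiv-base-extension-log-der}.

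For the logarithmicity check, I take any $f\in I$ and compute
$$
\Delta(f)=\sum_\alpha e_\alpha\,\widetilde{\delta_\alpha}(f)=\sum_\alpha e_\alpha\otimes\delta_\alpha(f)\in I^e=L\otimes_k I.
$$
Using the basis decomposition from the first step, membership of this sum in $\bigoplus_\alpha(e_\alpha\otimes I)$ forces $\delta_\alpha(f)\in I$ for every $\alpha$. Since $f\in I$ was arbitrary, $\delta_\alpha\in\Der_k(\log I)$, as required. There is no real obstacle here; the only delicate point is precisely the reason for the hypothesis, namely that without freeness one cannot separate the components $\delta_\alpha$ coordinate by coordinate to test them against $I$.
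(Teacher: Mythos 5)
Your proof is correct and follows essentially the same route as the paper: reduce to surjectivity of $\Phi^{L,R}_{\text{\rm ind}}$ via the flat case of Proposition \ref{prop:equiv-base-extension-log-der}, decompose an $I^e$-logarithmic derivation along a $k$-basis of $L$ into finitely many $k$-derivations $\delta_\alpha$ of $R$, and use the direct-sum decomposition $I^e=\bigoplus_\alpha(e_\alpha\otimes I)$ to force each $\delta_\alpha(I)\subseteq I$. The only cosmetic difference is that you invoke the isomorphism $\Phi^{L,R}$ abstractly where the paper writes the derivation explicitly in the basis of partial derivatives; the substance is identical.
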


\begin{proof} Since $L$ is a (faithfully) flat extension of $k$, after Proposition \ref{prop:equiv-base-extension-log-der} we only need to prove that the map $\Phi^{L,R}_{\text{\rm ind}}: L \otimes_k \Der_k(\log I) \to \Der_L(\log I^e)$ is surjective. Let $ \mathcal B= \{a_i, i \in \I\}$ be a $k$-basis of $L$ and
$\varepsilon: R_L \to R_L$ an $I^e$-logarithmic derivation. Since $\mathcal B$ is also a $R$-basis of $R_L$, there is a finite subset $\I_0 \subseteq \I$ and unique elements $r_{j i} \in R$, $1\leq j \leq d$, $i \in \I_0$ such that $\varepsilon(x_j) = \sum_{i\in\I_0} r_{j i} a_i$ for all $j=1,\dots,d$. We have
$$ \varepsilon = \sum_{j=1}^d \varepsilon(x_j) \partial_j = \sum_{i\in\I_0} a_i \wdelta_i = \Phi^{L,R} \left(  \sum_{i\in\I_0} a_i \otimes \delta_i \right),\ \text{with}\ \delta_i =  \sum_{j=1}^d r_{j i} \partial_j  \in \Der_k(R),
$$
and for each $f\in I$ we have $\varepsilon(f) = \sum_{i\in\I_0} a_i \delta_i(f) \in I^e$ and so $\delta_i(f)\in I$. We deduce that each $\delta_i$ is $I$-logarithmic and so $\varepsilon$ belongs to the image of $\Phi^{L,R}_{\text{\rm ind}}$.
\end{proof}

\subsection{Base change for integrable derivations}

\begin{prop} \label{Extension de D}
Let $A$ be a $k$-algebra, $I\subset A$ an ideal, $k \to L$ a ring extension, $I^e = I A_L$ the extended ideal and $m\in
\overline \N$. For any HS-derivation $D\in \HS_k(A;m)$, there is a unique HS-derivation $\widetilde{D}\in
\HS_L(A_L;m)$ such that the following diagram is commutative:
$$
\begin{tikzcd}
A \ar[r, "\varphi_D" ]  \ar[d, "\text{\rm nat.}"'] & A[|\mu|]_m \ar[d, "\text{\rm nat.}"]\\
A_L \ar[r, "\varphi_{\widetilde{D}}"] &  A_L[|\mu|]_m.
\end{tikzcd}
$$
Moreover, if $D$ is $I$-logarithmic, then $\widetilde{D}$ is $I^e$-logarithmic.
\end{prop}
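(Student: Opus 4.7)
My plan is to work through the bijection
$$
D\in \HS_k(A;m) \;\longleftrightarrow\; \varphi_D \in \Hom^\circ_{k\text{-alg}}\bigl(A,A[|\mu|]_m\bigr)
$$
recalled in the preamble, and to construct $\widetilde D$ by exhibiting the corresponding $\varphi_{\widetilde D}\in \Hom^\circ_{L\text{-alg}}(A_L,A_L[|\mu|]_m)$. Concretely, I consider the natural $k$-algebra map $\iota\colon A[|\mu|]_m \to A_L[|\mu|]_m$ induced by $a\mapsto 1\otimes a$, and form the composition $\iota\circ \varphi_D\colon A\to A_L[|\mu|]_m$, which is a $k$-algebra homomorphism into an $L$-algebra. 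By the universal property of extension of scalars ($\Hom_{k\text{-alg}}(A,B)\cong \Hom_{L\text{-alg}}(A_L,B)$ for any $L$-algebra $B$), there is a unique $L$-algebra homomorphism $\psi\colon A_L \to A_L[|\mu|]_m$ such that $\psi(1\otimes a)=\iota(\varphi_D(a))$ for every $a\in A$, which is exactly the commutativity of the stated diagram.

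Next I check that $\psi$ lies in $\Hom^\circ_{L\text{-alg}}$. For any generator $c\otimes a\in A_L$ (with $c\in L$, $a\in A$), $L$-linearity gives $\psi(c\otimes a)=c\cdot \iota(\varphi_D(a))$, and since $\varphi_D(a)\equiv a\bmod\mu$, its image under $\iota$ is congruent to $1\otimes a$ modulo $\mu$; hence $\psi(c\otimes a)\equiv c\otimes a\bmod\mu$. Extending $\mathbb Z$-linearly, $\psi(\alpha)\equiv \alpha_0\bmod \mu$ for all $\alpha\in A_L$, so $\psi$ is of the required form and corresponds to a unique HS-derivation $\widetilde D\in \HS_L(A_L;m)$ with $\varphi_{\widetilde D}=\psi$. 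Uniqueness of $\widetilde D$ follows from the uniqueness of the $L$-algebra extension $\psi$, since $\widetilde D$ is determined by $\varphi_{\widetilde D}$.

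For the logarithmic statement, I use that $I^e=IA_L$ is generated as an ideal of $A_L$ by the elements $\{1\otimes f\mid f\in I\}$. If $D$ is $I$-logarithmic, then $\varphi_D(f)\in I[|\mu|]_m$ for every $f\in I$, and its image under $\iota$ lies in $I^e[|\mu|]_m$; therefore $\varphi_{\widetilde D}(1\otimes f)=\iota(\varphi_D(f))\in I^e[|\mu|]_m$. Because $\varphi_{\widetilde D}$ is an $L$-algebra homomorphism, it maps the ideal generated by these elements, namely $I^e$, into $I^e[|\mu|]_m$, i.e.\ $\widetilde D_i(I^e)\subseteq I^e$ for all $i$, as required.

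The only place I anticipate having to be slightly careful is the identification $A_L[|\mu|]_m = L\otimes_k A[|\mu|]_m$ (which I only need implicitly through $\iota$): for finite $m$, $A[|\mu|]_m$ is a free $A$-module of rank $m+1$ and the identification is immediate, while for $m=\infty$ one only has a natural map $L\otimes_k A[[\mu]]\to A_L[[\mu]]$, which is enough for the construction since the target is the $L$-algebra in which we want $\varphi_{\widetilde D}$ to land. No further subtlety arises.
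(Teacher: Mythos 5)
Your proof is correct. The paper actually states Proposition \ref{Extension de D} without any proof (it is treated as standard and is followed immediately by a remark), and your argument --- transporting $D$ to $\varphi_D\in\Hom^\circ_{k\text{-alg}}(A,A[|\mu|]_m)$, applying the universal property of $L\otimes_k(-)$ to get the unique $L$-algebra map $\varphi_{\widetilde D}$, and checking the $\bmod\ \mu$ condition and the logarithmic claim on the generators $1\otimes f$ of $I^e$ --- is exactly the argument the paper's setup in Section 1 is designed to make routine. Your closing caveat is also well placed: you only need the map $\iota\colon A[|\mu|]_m\to A_L[|\mu|]_m$, not the identification $A_L[|\mu|]_m\cong L\otimes_k A[|\mu|]_m$, so nothing breaks at $m=\infty$.
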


Observe that for $m=1$, we know that $\Der_k(R)\equiv \HS_k(R;1)$
and the extension process $D \mapsto \widetilde{D}$ described in
Proposition \ref{Extension de D} coincides with the usual extension
$\delta \mapsto \wdelta$ of derivations.

\begin{lem} \label{Propiedades Extension}
Let $A$ be a $k$-algebra, $I\subset A$ an ideal, $k \to L$ a ring extension, $m\in
\overline \N$, $n\leq m$, $D\in \HS_k(A;m)$ a HS-derivation and $\psi:A[|\mu|]_m\rightarrow  A[|\mu|]_n$ a substitution map. The following properties hold:
\begin{enumerate}
\item[(1)] The map $D\in \HS_k(A;m) \mapsto \widetilde{D} \in \HS_L(A_L;m)$ is a group homomorphism.
\item[(2)] $\widetilde{\psi\bullet D}=\widetilde{\psi}\bullet \widetilde D$, where $\widetilde{\psi}:A_L[|\mu|]_m\rightarrow A_L[|\mu|]_n$ is the substitution map induced by $\psi$.
\item[(3)] If $D$ is $n-I$-logarithmic, then $\widetilde D$ is $n-I^e$-logarithmic.
\end{enumerate}
\end{lem}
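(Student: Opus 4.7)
The plan is to deduce each of the three statements from the uniqueness part of Proposition~\ref{Extension de D}. Write $\iota\colon A\to A_L$ for the natural map $a\mapsto 1\otimes a$ and $\iota_m\colon A[|\mu|]_m\to A_L[|\mu|]_m$ for its canonical $k[|\mu|]_m$-linear extension. The defining property of $\widetilde{D}$ then reads $\varphi_{\widetilde{D}}\circ \iota = \iota_m\circ \varphi_D$, and for each claimed equality I will exhibit two HS-derivations of $A_L$ over $L$ whose associated $\varphi$'s both satisfy this square with the same right-hand side, so that uniqueness forces equality.

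For (1), given $D,D'\in \HS_k(A;m)$, the goal is $\widetilde{D\circ D'}=\widetilde{D}\circ \widetilde{D'}$. Using the group law $\varphi_{D\circ D'}=\widetilde{\varphi_D}\circ \varphi_{D'}$ and its tilde counterpart, the key auxiliary fact is that the $k[|\mu|]_m$-algebra automorphism $\widetilde{\varphi_{\widetilde{D}}}$ of $A_L[|\mu|]_m$ satisfies $\widetilde{\varphi_{\widetilde{D}}}\circ \iota_m=\iota_m\circ \widetilde{\varphi_D}$; by $k[|\mu|]_m$-linearity this reduces to the defining square for $\widetilde{D}$. Chaining these identities gives $\varphi_{\widetilde{D}\circ \widetilde{D'}}\circ \iota=\iota_m\circ \varphi_{D\circ D'}$, and uniqueness yields (1).

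For (2), the substitution map $\widetilde{\psi}$ is by construction characterized by $\widetilde{\psi}\circ \iota_m=\iota_n\circ \psi$. Combining this with the defining square for $\widetilde{D}$ gives $\varphi_{\widetilde{\psi}\bullet \widetilde{D}}\circ \iota = \widetilde{\psi}\circ \varphi_{\widetilde{D}}\circ \iota = \widetilde{\psi}\circ \iota_m\circ \varphi_D = \iota_n\circ \psi\circ \varphi_D = \iota_n\circ \varphi_{\psi\bullet D}$, and uniqueness forces $\widetilde{\psi\bullet D}=\widetilde{\psi}\bullet \widetilde{D}$. For (3), the truncation $\tau_{mn}$ is the action of a substitution map with constant coefficients, so (2) applied to it yields $\widetilde{\tau_{mn}(D)}=\tau_{mn}(\widetilde{D})$; if $\tau_{mn}(D)\in \HS_k(\log I;n)$, then the last assertion of Proposition~\ref{Extension de D} gives $\tau_{mn}(\widetilde{D})\in \HS_L(\log I^e;n)$, which is exactly the statement that $\widetilde{D}$ is $n-I^e$-logarithmic.

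The main (modest) obstacle throughout is purely bookkeeping: being consistent about which natural map $\iota$ or $\iota_m$ is in play, and verifying the naturality squares relating the tilde operations on substitution maps and extended algebra automorphisms to the original ones. Once those squares are drawn each of the three statements falls out mechanically, and no delicate componentwise computation is required.
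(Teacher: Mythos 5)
Your argument is correct: the paper states this lemma without proof (it is treated as routine), and your derivation of all three parts from the uniqueness clause of Proposition~\ref{Extension de D} — via the naturality squares $\widetilde{\varphi_{\widetilde{D}}}\circ \iota_m=\iota_m\circ \widetilde{\varphi_D}$ and $\widetilde{\psi}\circ \iota_m=\iota_n\circ \psi$, plus the logarithmicity clause applied to $\tau_{mn}(D)$ for part (3) — is exactly the intended routine verification. No gaps.
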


\begin{lem}\label{Extension y poli}
Let $I\subseteq A$ be an ideal, $B=A/I$ and $I^e=IA_L$ the extended
ideal. Then, for each $D\in \HS_k(\log I;m)$,
$$
\widetilde{\Pi_{\HS,m}^I(D)}=\Pi_{\HS,m}^{I^e}\left(\widetilde
D\right)
$$
(where $\widetilde{\Pi^I_{\HS,m}(D)}$ is the extension of
$\Pi^I_{\HS,m}(D)\in \HS_k(B;m)$ to $B_L=A_L/I^e$ and $\widetilde
D\in \HS_L(\log I^e;m)\subseteq \HS_L(A_L;m)$).
\end{lem}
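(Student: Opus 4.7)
The plan is to identify both HS-derivations $\widetilde{\Pi_{\HS,m}^I(D)}$ and $\Pi_{\HS,m}^{I^e}(\widetilde D)$ in $\HS_L(B_L;m)$ with the unique HS-derivation provided by the universal property in Proposition \ref{Extension de D} applied to $\overline D := \Pi_{\HS,m}^I(D) \in \HS_k(B;m)$, and then appeal to that uniqueness.

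First I would confirm that the right-hand side makes sense: since $D$ is $I$-logarithmic, Lemma \ref{Propiedades Extension}(3) (with $n=m$) says that $\widetilde D$ is $I^e$-logarithmic, so $\Pi_{\HS,m}^{I^e}$ may be applied to it. Set $E := \Pi_{\HS,m}^{I^e}(\widetilde D) \in \HS_L(B_L;m)$. By Proposition \ref{Extension de D}, $\widetilde{\overline D}$ is the unique element of $\HS_L(B_L;m)$ whose associated $L$-algebra morphism makes the square
$$
\begin{tikzcd}
B \ar[r,"\varphi_{\overline D}"] \ar[d,"\mathrm{nat}"'] & B[|\mu|]_m \ar[d,"\mathrm{nat}"] \\
B_L \ar[r,"\varphi_{\widetilde{\overline D}}"'] & B_L[|\mu|]_m
\end{tikzcd}
$$
commute; hence it suffices to check that $\varphi_E$ fits into this same square.

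For this I would run a short diagram chase in the cube with vertices $A, A[|\mu|]_m, A_L, A_L[|\mu|]_m, B, B[|\mu|]_m, B_L, B_L[|\mu|]_m$, in which four of the six faces commute by the defining diagrams of $\varphi_D$, $\varphi_{\overline D}$, $\varphi_{\widetilde D}$ and $\varphi_E$, while the remaining two commute because the quotient by $I$ and the extension of scalars $(-)\otimes_k L$ are compatible, using the canonical identification $A_L[|\mu|]_m/I^e A_L[|\mu|]_m = B_L[|\mu|]_m$. Chasing an arbitrary $a\in A$ around the cube yields $\varphi_E(\mathrm{nat}(\pi(a))) = \mathrm{nat}(\varphi_{\overline D}(\pi(a)))$, and since $\pi: A \to B$ is surjective this is precisely the commutativity of the square above with $\varphi_E$ in place of $\varphi_{\widetilde{\overline D}}$. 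Uniqueness then gives $E = \widetilde{\overline D}$, which is the claim.

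I do not expect a genuine obstacle; the only bookkeeping step requiring some care is the commutativity of the square connecting $A[|\mu|]_m$, $B[|\mu|]_m$, $A_L[|\mu|]_m$ and $B_L[|\mu|]_m$, but this is automatic once one notes that the variable $\mu$ plays no role either under the quotient by $I$ or under the base change to $L$, both operations acting only on the coefficients.
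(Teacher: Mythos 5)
Your proof is correct. The paper states this lemma without proof (treating it as routine), and your argument --- identifying both sides with the unique extension of $\Pi^I_{\HS,m}(D)$ furnished by Proposition \ref{Extension de D} via a diagram chase through the cube, using the canonical identification $B_L[|\mu|]_m \cong A_L[|\mu|]_m / I^e A_L[|\mu|]_m$ and the surjectivity of $A\to B$ --- is exactly the intended verification.
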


\begin{cor}   \label{Integrales_y_extension}
Under the hypotheses of Lemma \ref{Propiedades Extension}, let
$\delta:A \to A$ be a $k$-derivation (resp. an
$I$-logarithmic $k$-derivation). If $\delta$ is
$m$-integrable (resp. $I$-logarithmically $m$-integrable), then
$\wdelta$ is also $m$-integrable (resp. $I^e$-logarithmically
$m$-integrable).
\end{cor}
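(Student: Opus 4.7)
The plan is to deduce the corollary directly from Proposition \ref{Extension de D} and Lemma \ref{Propiedades Extension}, by extending an integral of $\delta$ and identifying its first component with $\wdelta$.

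First, suppose $\delta \in \Der_k(A)$ is $m$-integrable. By definition, there exists $D \in \HS_k(A;m)$ with $D_1 = \delta$, i.e.\ $\tau_{m,1}(D) = (\Id, \delta)$. Apply Proposition \ref{Extension de D} to produce $\widetilde{D} \in \HS_L(A_L;m)$. The key point is to show $\widetilde{D}_1 = \wdelta$, so that $\widetilde{D}$ is an $m$-integral of $\wdelta$. To see this, note that $\tau_{m,1}$ is a substitution map (with constant coefficients), so by Lemma \ref{Propiedades Extension} (2),
$$\tau_{m,1}(\widetilde{D}) = \widetilde{\tau_{m,1}(D)} = \widetilde{(\Id,\delta)}.$$
For length-$1$ HS-derivations the extension process coincides with the usual extension of derivations (as remarked right after Proposition \ref{Extension de D}), so $\widetilde{(\Id,\delta)} = (\Id, \wdelta)$, hence $\widetilde{D}_1 = \wdelta$. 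This proves $\wdelta \in \IDer_L(A_L;m)$.

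For the logarithmic statement, suppose $\delta \in \Der_k(\log I)$ is $I$-logarithmically $m$-integrable. Then there exists $D \in \HS_k(\log I;m)$ with $D_1 = \delta$. Applying Lemma \ref{Propiedades Extension} (3) with $n=m$, the extension $\widetilde{D}$ lies in $\HS_L(\log I^e;m)$. The same truncation argument as above shows $\widetilde{D}_1 = \wdelta$, so $\widetilde{D}$ is an $I^e$-logarithmic $m$-integral of $\wdelta$, proving $\wdelta \in \IDer_L(\log I^e;m)$.

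There is essentially no obstacle: the whole content is already packaged in the preceding Proposition and Lemma. The only thing to verify carefully is the identification $\widetilde{D}_1 = \wdelta$, which reduces to the compatibility between the extension process for HS-derivations and truncation (Lemma \ref{Propiedades Extension} (2)) together with the base case $m=1$ remarked in the paper. No further computation is needed.
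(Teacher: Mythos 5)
Your proof is correct and follows essentially the same route as the paper: take an $m$-integral $D$ of $\delta$, extend it to $\widetilde{D}$ via Proposition \ref{Extension de D}, identify $\widetilde{D}_1=\wdelta$, and use the logarithmic clause for the second assertion. The only cosmetic difference is that you justify $\widetilde{D}_1=\wdelta$ via truncation and Lemma \ref{Propiedades Extension}, whereas the paper reads it off directly from the commutative diagram defining $\widetilde{D}$; both are fine.
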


\begin{proof} Let us suppose that $\delta\in \IDer_k(A;m)$ and let us
consider an $m$-integral $D\in \HS_k(A;m)$ of $\delta$, i.e.
$D_1=\delta$. From Proposition \ref{Extension de D}, $\widetilde D
\in \HS_L(A_L;m)$ is an $m$-integral of $\widetilde D_1=\widetilde
\delta$, i.e. $\widetilde\delta\in \IDer_k(A;m)$. Moreover, if
$\delta\in \IDer_k(\log I;m)$, then we can consider $D\in \HS_k(\log
I;m)$ and, by Proposition \ref{Extension de D}, $\widetilde D\in
\HS_L(\log I^e;m)$. Hence, $\widetilde\delta\in \IDer_L(\log
I^e;m)$.
\end{proof}

As a consequence of the above corollary, base change maps $\Phi^{L,A}: L\otimes_k \Der_k(A) \to \Der_L(A_L)$ and $\Phi^{L,A}_{\text{\rm ind}}: L\otimes_k \Der_k(\log I) \to \Der_L(\log I^e)$
induce, for each $m\in\overline{\N}$, new $A_L$-linear base change maps:
$$
 \Phi_m^{L,A}: L\otimes_k \IDer_k(A;m) \longrightarrow \IDer_L(A_L;m),\quad
 \Phi^{L,A}_{\text{\rm ind},m}: L\otimes_k \IDer_k(\log I; m) \to \IDer_L(\log I^e; m).
$$

From now on, we assume that $L$ is flat over $k$ and $A$ a finitely generated
$k$-algebra. Then, we can put  $A=R/I$ where
$R=k[x_1,\ldots,x_d]$ is a polynomial ring and $I\subset R$ an ideal.

From the exact sequence in Proposition \ref{Anillo polinomios, logaritmico, map sobrey}, we obtain for each $m\in\overline{\N}$ a commutative diagram with exact rows (compare with (\ref{eq:diag-base-change-derlog})):
\begin{equation}  \label{eq:diag-base-change-Iderlog}
\begin{tikzcd}
0 \ar[]{r}{} &
L \otimes_k \left(I(\Der_k(R))\right) \ar[]{r}{} \ar[]{d}{\Phi^{L,R}_{\text{\rm ind}}}  & L \otimes_k \IDer_k(\log I;m) \ar[]{rr}{\Id\otimes\Pi^I_m} \ar[]{d}{\Phi^{L,R}_{\text{\rm ind},m}} && L \otimes_k \IDer_k(A;m)  \ar[]{r}{}  \ar[]{d}{\Phi^{L,A}_m} &0 \\
0 \ar[]{r}{} &
I^e \Der_L(R_L)  \ar[]{r}{} &  \IDer_L(\log I^e;m) \ar[]{rr}{\Pi^{I^e}_m} && \IDer_L(A_L;m) \ar[]{r}{} & 0.
\end{tikzcd}
\end{equation}
Moreover the left vertical arrow is bijective and the middle vertical arrow is injective.

The proof of the following lemma is clear.

\begin{lem}\label{Inyectividad/Sobreyectividad}
Under the above hypotheses, the following properties hold:
\begin{itemize}
\item[1.] $\Phi^{L,A}_m$ is injective.
\item[2.] $\Phi^{L,R}_{\text{\rm ind},m}$ is surjective if and only if $\Phi^{L,A}_m$ is surjective.
\end{itemize}
\end{lem}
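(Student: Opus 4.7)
The plan is to deduce both statements as a routine diagram chase in the commutative square with exact rows~(\ref{eq:diag-base-change-Iderlog}), exploiting that the leftmost vertical arrow $\Phi^{L,R}_{\text{\rm ind}}$ is bijective and the middle vertical arrow $\Phi^{L,R}_{\text{\rm ind},m}$ is injective. The exactness of the top row is a direct consequence of the flatness of $L$ over $k$ applied to the short exact sequence of Proposition~\ref{Anillo polinomios, logaritmico, map sobrey}, and the bottom row is the same sequence for the pair $(R_L,I^e)$.

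For part~1, I will apply the four-lemma in its injectivity version (left vertical surjective + middle vertical injective $\Rightarrow$ right vertical injective). Explicitly, given $\xi\in L\otimes_k \IDer_k(A;m)$ with $\Phi^{L,A}_m(\xi)=0$, I will lift $\xi$ to some $\eta\in L\otimes_k\IDer_k(\log I;m)$ using surjectivity of $\Id\otimes\Pi^I_m$; commutativity shows $\Phi^{L,R}_{\text{\rm ind},m}(\eta)\in \ker\Pi^{I^e}_m = I^e\Der_L(R_L)$; bijectivity of $\Phi^{L,R}_{\text{\rm ind}}$ produces $\zeta\in L\otimes_k(I\Der_k(R))$ mapping to $\Phi^{L,R}_{\text{\rm ind},m}(\eta)$, and then $\eta-\zeta$ still lifts $\xi$ but is killed by $\Phi^{L,R}_{\text{\rm ind},m}$; injectivity of the latter forces $\eta=\zeta$, so $\xi=0$.

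For part~2, the forward implication is immediate from the commutativity relation $\Phi^{L,A}_m\circ(\Id\otimes\Pi^I_m)=\Pi^{I^e}_m\circ\Phi^{L,R}_{\text{\rm ind},m}$: if $\Phi^{L,R}_{\text{\rm ind},m}$ is surjective then the right-hand composition is surjective, so $\Phi^{L,A}_m$ (post-composed after a surjection) is surjective. The reverse implication is the short five-lemma in its surjectivity version (left and right vertical surjective $\Rightarrow$ middle vertical surjective). Concretely, given $\varepsilon\in\IDer_L(\log I^e;m)$, I will use surjectivity of $\Phi^{L,A}_m$ to pull back $\Pi^{I^e}_m(\varepsilon)$ to an element of $L\otimes_k\IDer_k(A;m)$, lift it further through $\Id\otimes\Pi^I_m$ to some $\eta\in L\otimes_k\IDer_k(\log I;m)$, and then correct $\varepsilon - \Phi^{L,R}_{\text{\rm ind},m}(\eta)\in I^e\Der_L(R_L)$ using the bijection $\Phi^{L,R}_{\text{\rm ind}}$ together with the inclusion $L\otimes_k(I\Der_k(R))\hookrightarrow L\otimes_k\IDer_k(\log I;m)$ to land exactly on~$\varepsilon$.

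No step presents a real obstacle: the content lies entirely in the general linear-algebraic lemmas applied to diagram~(\ref{eq:diag-base-change-Iderlog}). The only non-trivial ingredients, already available, are the exactness of the two rows (which uses flatness of $L/k$) and the two properties of the first two vertical arrows recorded just after the diagram; this is presumably why the author states that the proof \emph{is clear}.
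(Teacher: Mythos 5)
Your proof is correct and is exactly the diagram chase the paper intends when it declares the lemma ``clear'': both parts follow from the four lemma applied to diagram~(\ref{eq:diag-base-change-Iderlog}), using the exactness of the rows, the bijectivity of $\Phi^{L,R}_{\text{\rm ind}}$ and the injectivity of $\Phi^{L,R}_{\text{\rm ind},m}$, all of which are recorded in the paper just before the statement. Nothing further is needed.
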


 Moreover, we have the following result about leaps.

\begin{lem}\label{Saltos y cambio de base}
Assume that $L$ is faithfully flat over $k$ and $A$ a finitely
generated $k$-algebra. If $\Phi^{L,A}_m$ is surjective for all
$m\geq 1$ then,
$$
\Leap_k(A)=\Leap_L(A_L).
$$
\end{lem}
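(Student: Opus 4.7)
The plan is to reduce the equality $\Leap_k(A) = \Leap_L(A_L)$ to the assertion that, for each integer $s > 1$, the natural inclusion $\IDer_k(A;s) \subseteq \IDer_k(A;s-1)$ is strict if and only if the corresponding inclusion $\IDer_L(A_L;s) \subseteq \IDer_L(A_L;s-1)$ is strict. Before doing anything else, I would note that by Lemma \ref{Inyectividad/Sobreyectividad} (1) the maps $\Phi^{L,A}_m$ are always injective, so the surjectivity hypothesis promotes each $\Phi^{L,A}_m$ to an isomorphism.

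For each $s \geq 2$, I would consider the inclusion $\iota_s:\IDer_k(A;s)\hookrightarrow\IDer_k(A;s-1)$ (any $s$-integrable derivation is $(s-1)$-integrable via truncation) and the analogous map $\iota^L_s$ over $L$. Since $L$ is flat over $k$, $1_L\otimes\iota_s$ is injective, and I would assemble a commutative square
\[
\begin{tikzcd}
L\otimes_k\IDer_k(A;s) \ar[r, hook, "1\otimes\iota_s"] \ar[d, "\Phi^{L,A}_s"'] & L\otimes_k\IDer_k(A;s-1) \ar[d, "\Phi^{L,A}_{s-1}"] \\
\IDer_L(A_L;s) \ar[r, hook, "\iota^L_s"'] & \IDer_L(A_L;s-1)
\end{tikzcd}
\]
whose vertical arrows are the base change isomorphisms just mentioned. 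Commutativity is immediate from the definition of $\Phi^{L,A}_m$ as the restriction to $L\otimes_k\IDer_k(A;m)$ of the global $A_L$-linear map $c\otimes\delta\mapsto c\widetilde\delta$; the rule does not depend on $m$ and so commutes with any truncation-induced inclusion.

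Faithful flatness of $L$ over $k$ will then finish the argument: a morphism of $k$-modules is an isomorphism if and only if its extension along $k\to L$ is. Thus $\iota_s$ is bijective iff $1_L\otimes\iota_s$ is, iff (via the two vertical isomorphisms) $\iota^L_s$ is; equivalently, $s\in\Leap_k(A)\iff s\in\Leap_L(A_L)$. The argument is essentially formal, and I do not expect a real obstacle; the only point requiring a moment's thought is the commutativity of the square, i.e. the observation that the maps $\Phi^{L,A}_m$ for varying $m$ are coherent with the truncation inclusions, which is built into their definition.
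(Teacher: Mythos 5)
Your argument is correct and is essentially the paper's own proof: the paper likewise upgrades $\Phi^{L,A}_m$ to an isomorphism (injectivity from flatness, surjectivity by hypothesis), identifies $\IDer_L(A_L;m-1)/\IDer_L(A_L;m)$ with $L\otimes_k\bigl(\IDer_k(A;m-1)/\IDer_k(A;m)\bigr)$ via the compatibility of the base change maps with the inclusions, and concludes by faithful flatness. Your phrasing in terms of the inclusion maps being isomorphisms, rather than the quotients vanishing, is only a cosmetic difference.
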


\begin{proof}
Since $L$ is flat over $k$, we have that $\Phi^{L,A}_m$ is bijective
so, $\IDer_L(A_L;m)=\IDer_L(A_L;m-1)$ if and only if
$$
\IDer_L(A_L;m-1)/\IDer_L(A_L;m)=0 \Leftrightarrow L\otimes \left(\IDer_k(A;m-1)/\IDer_k(A;m)\right)=0
$$
Since $L$ is faithfully flat over $k$, the last equality holds if
and only if $\IDer_k(A;m-1)/\IDer_k(A;m)=0$ and we have the result.

\end{proof}

In the rest of this section, we will study the surjectivity of
$\Phi_m^{L,A}$. Let us start by giving a counterexample.

\begin{Contraejemplo}\label{Contraejemplo}
Let us consider $k=\mathbb F_2(s,t)$ the quotient field of $\mathbb
F_2[s,t]$ and $L=\overline k$ the perfect closure of $k$. We denote
$A:=k[x,y]/\langle h\rangle$ where $h\in k[x,y]$ is the irreducible
polynomial $x^2+y^2+tx^4+sy^4$. Then, $\Phi^{L,A}_4$ is not
surjective.
\end{Contraejemplo}

To prove this counterexample, we need to calculate the 4-integrable
derivations of $A$ (resp. $A_L$) over $k$ (resp. over $L$). To do
this, we use two general results:

\begin{prop}{\rm \cite[{\bf Prop. 2.10}]{Ti1}}\label{I^p -logarithmicas}
Let $k$ be a unique factorization domain of characteristic $p>0$,
$R=k[x_1,\ldots,x_d]$ the polynomial ring over $k$ and $h$ a
polynomial of $R$. For all $n\in \mathbb N$, we have:
$$
\IDer_k(\log h;n)=\IDer_k(\log h^p;np).
$$
\end{prop}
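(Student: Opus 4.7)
The plan is to prove both inclusions directly by combining the Frobenius identity in characteristic $p$ with the fact that $R$ is a unique factorization domain. The central preliminary computation applies to any $E\in \HS_k(R;np)$: writing $\varphi_E(h) = h + \sum_{i=1}^{np} b_i \mu^i$ with $b_i = E_i(h)\in R$, the hypothesis $\charr(k)=p$ makes the Frobenius a ring endomorphism of $R[|\mu|]_{np}$, so
\[
\varphi_E(h^p) = (\varphi_E(h))^p = h^p + \sum_{i=1}^{np} b_i^p\,\mu^{ip}.
\]
Since $\mu^{ip}=0$ in $R[|\mu|]_{np}$ as soon as $ip>np$, only terms with $i\leq n$ survive, yielding
\[
\varphi_E(h^p) - h^p = \sum_{i=1}^{n} b_i^p\,\mu^{ip}.
\]
The key point is that this expression depends \emph{only} on $b_1,\dots,b_n$, i.e. only on $\tau_{np,n}(E)$; this is what links $h^p$-logarithmicity at length $np$ with $h$-logarithmicity at length $n$.

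For the inclusion $\IDer_k(\log h;n) \subseteq \IDer_k(\log h^p;np)$, I would pick $\delta\in \IDer_k(\log h;n)$ together with an $h$-logarithmic $n$-integral $D\in \HS_k(\log h;n)$. By Proposition \ref{HSenpolise extiende}, $D$ extends to some $E\in \HS_k(R;np)$ with $\tau_{np,n}(E)=D$. Then $b_i=D_i(h)\in (h)$ for $i\leq n$, hence $b_i^p\in (h^p)$, and the displayed identity shows $\varphi_E(h^p)-h^p\in (h^p)R[|\mu|]_{np}$; equivalently $E_j(h^p)\in (h^p)$ for all $j$. Thus $E\in \HS_k(\log h^p;np)$ is an $h^p$-logarithmic $np$-integral of $\delta$.

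For the reverse inclusion, I start with $\delta\in \IDer_k(\log h^p;np)$ and $E\in \HS_k(\log h^p;np)$ with $E_1=\delta$, and set $D:=\tau_{np,n}(E)\in \HS_k(R;n)$, so that $D_1=\delta$. The $h^p$-logarithmicity of $E$ and the displayed formula force $b_i^p\in (h^p)$, i.e. $h^p\mid b_i^p$ in $R$, for every $i=1,\dots,n$. Since $k$ is a UFD, so is $R=k[x_1,\dots,x_d]$; comparing exponents of each prime factor of $h$ then yields $h\mid b_i$. Therefore $D_i(h)\in (h)$ for $i\leq n$, which is exactly the $h$-logarithmicity of $D$, and hence $\delta\in \IDer_k(\log h;n)$.

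I do not anticipate a serious obstacle: the whole argument pivots on two elementary facts, the Frobenius identity $(a+b)^p=a^p+b^p$ in characteristic $p$ and the UFD implication $h^p\mid b^p\Rightarrow h\mid b$. The slightly subtle step is noticing that passing from length $n$ to length $np$ makes the Frobenius expansion of $\varphi_E(h^p)$ truncate at exactly the right place, so that $h^p$-logarithmicity at length $np$ sees nothing beyond the $n$-th component of $E$.
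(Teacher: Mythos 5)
Your argument is correct. Note that the paper itself gives no proof of this proposition --- it is quoted verbatim from \cite[Prop.~2.10]{Ti1} --- so there is no in-paper argument to compare against; your Frobenius computation $\varphi_E(h^p)=\varphi_E(h)^p=h^p+\sum_{i\le n}b_i^p\mu^{ip}$ (with the truncation at $i\le n$ because $\mu^{ip}=0$ for $ip>np$), combined with the UFD step $h^p\mid b_i^p\Rightarrow h\mid b_i$, is a complete and self-contained proof. The only points you leave implicit are standard: logarithmicity along the principal ideal $\langle h\rangle$ (resp.\ $\langle h^p\rangle$) can be checked on the generator alone via the Leibniz rule $E_j(hg)=\sum_{a+b=j}E_a(h)E_b(g)$, and the degenerate cases $h=0$ or $h$ a unit are trivial.
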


\begin{prop}{\rm\cite[{\bf Prop. 2.2.4}]{Na2}}\label{Narvaez Prop 2.2.4}
Let $h\in R=k[x_1,\ldots,x_d]$, $I=\h$ and $J^0=\langle
\partial_1(h),\ldots,\partial_d(h)\rangle$ the gradient ideal. If
$\delta:R\rightarrow R$ is an $I$-logarithmic $k$-derivation with
$\delta\in J^0\Der_k(R)$, then $\delta$ admits an $I$-logarithmic
integral $D\in \HS_k(\log I)$ with $D_i(h)=0$ for all $i>i$. In
particular, if $\delta(h)=0$, the integral $D$ can be taken with
$\varphi_D(h)=h$.
\end{prop}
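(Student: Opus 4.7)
The plan is to construct $D$ inductively, one component at a time, maintaining a double invariant: (i) $D_j(x_i)\in J^0$ for every coordinate $x_i$ and every $1\le j\le n$, and (ii) $D_j(h)=0$ for $2\le j\le n$ (the sub-case $\delta(h)=0$ lets us strengthen (ii) to $j\ge 1$, which is equivalent to $\varphi_D(h)=h$). Starting from $D^{(1)}=(\Id,\delta)$, which satisfies (i) by the hypothesis $\delta\in J^0\Der_k(R)$, I will produce a compatible tower $(D^{(n)})_{n\ge 1}$ with $\tau_{n,n-1}(D^{(n)})=D^{(n-1)}$ and pass to the limit in $\HS_k(R)$; the invariants will guarantee that the resulting $D$ is $I$-logarithmic with the required property on $\varphi_D(h)$.

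For the inductive step from $n-1$ to $n$, I use that $R=k[x_1,\ldots,x_d]$ is $0$-smooth over $k$, so Theorem~\ref{IDer=Der} supplies some extension $\widehat D\in\HS_k(R;n)$ of $D^{(n-1)}$. By Lemma~\ref{La ultima componente difiere en una derivacion}, every other extension has the form $\widehat D\circ (\Id,\epsilon)[n]$ with $\epsilon\in\Der_k(R)$, and its last component is $\widehat D_n+\epsilon$. The problem thus reduces to finding $\epsilon\in\Der_k(R)$ such that the corrected $D_n:=\widehat D_n+\epsilon$ satisfies $D_n(x_i)\in J^0$ for every $i$ and $D_n(h)=0$.

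The core of the argument is a Taylor-type expansion of $\varphi_{\widehat D}(h)=h(\varphi_{\widehat D}(x_1),\ldots,\varphi_{\widehat D}(x_d))$ in (Hasse) derivatives of $h$, producing an identity
\[
\widehat D_n(h)\;=\;\sum_{i=1}^d \widehat D_n(x_i)\,\partial_i h\;+\;S,
\]
where $S$ collects the contributions of multi-indices $\alpha$ with $|\alpha|\ge 2$: each contribution is a divided-power derivative of $h$ multiplied by a product of factors $\widehat D_{j_k}(x_{i_k})$ with $\sum_k j_k=n$; the condition $|\alpha|\ge 2$ forces each $j_k<n$. By the inductive invariant (i), every such factor lies in $J^0$, so $S\in (J^0)^{|\alpha|}\subseteq (J^0)^2$.

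Parametrising $\epsilon(x_i)=u_i-\widehat D_n(x_i)$ with $u_i\in J^0$ (which automatically enforces invariant (i) for $D_n$), the cancellation requirement $\epsilon(h)=-\widehat D_n(h)$ becomes $\sum_i u_i\,\partial_i h=-S$. Since $-S\in (J^0)^2$, one can write $-S=\sum_{k,l}c_{kl}(\partial_k h)(\partial_l h)$ and set $u_i:=\sum_l c_{il}\,\partial_l h\in J^0$; a direct substitution gives $\sum_i u_i\,\partial_i h=-S$, completing the inductive step. The main obstacle is the Taylor bookkeeping in positive characteristic: one must work with Hasse (divided-power) derivatives throughout and verify carefully that the summands of $S$ only involve $\widehat D_j$ with $j<n$. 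The general case $\delta(h)\in I$ with $\delta(h)\ne 0$ is treated analogously by relaxing (ii) to $D_j(h)\in I$ and carrying out the same adjustment modulo $I$.
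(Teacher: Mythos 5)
This proposition is quoted in the paper from \cite[Prop.~2.2.4]{Na2} without proof, so there is no in-paper argument to compare against; your proposal must stand on its own, and it does. Your inductive scheme is correct: the Taylor expansion of $\varphi_{\widehat D}(h)=h(\varphi_{\widehat D}(x_1),\ldots,\varphi_{\widehat D}(x_d))$ in divided-power derivatives does put the higher-order obstruction $S$ in $(J^0)^2$ (every monomial in the $\mu^n$-coefficient of $v^\alpha$ with $|\alpha|\ge 2$ has all indices $j_k\le n-1$, so the invariant $D_j(x_i)\in J^0$ applies to each factor), and the correction $u_i=\sum_l c_{il}\,\partial_l h$ kills it while preserving both invariants; this is essentially the argument of the cited source. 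One small simplification: your final paragraph's worry about the case $\delta(h)\neq 0$ is unnecessary, since $S$ never involves $D_1(h)$, only the $D_j(x_i)$, so the same argument yields $D_j(h)=0$ exactly for all $j\ge 2$ (which is what the statement's ``$i>1$'' asks for), with $\varphi_D(h)=h+\delta(h)\mu$ and hence $I$-logarithmicity via Leibniz.
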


\noindent{\bf Proof of Counterexample \ref{Contraejemplo}.} To
calculate the $m$-integrable derivations of $A$, we will follow the
same step of Example 7 of \cite{Ma1}. Let us suppose that there
exists $\delta\in \IDer_k(A;m)$ and $D\in \HS_k(A;m)$ an integral of
$\delta$. Let us consider
$$
\begin{array}{rccl}
\varphi_D:&A&\rightarrow& A[|\mu|]\\
&x&\mapsto&x+u_1\mu+u_2\mu^2+\cdots\\
&y&\mapsto&y+v_1\mu+v_2\mu^2+\cdots
\end{array}
$$
where $u_i,v_i\in A$. To $\varphi_D$ be well defined, we need
$\varphi_D(h)=0$, i.e.
$$
(x+u_1\mu+u_2\mu^2+\cdots)^2+(y+v_1\mu+v_2\mu^2+\cdots)^2+
t(x+u_1\mu+u_2\mu^2+\cdots)^4+s(y+v_1\mu+v_2\mu^2+\cdots)^4=0.
$$
By looking at the coefficient of $\mu^2$ in the previous equation, we deduce
$u_1^2+v_1^2=(u_1+v_1)^2=0$, and since $A$ is a domain, $u_1=v_1$.
By looking at the  coefficient of $\mu^4$, we deduce
$u_2^2+v_2^2+tu^4_1+sv^4_1=0$. We can write $w=u_2+v_2$ and
$u=u_1=v_1$, and we obtain the equation:
$$
w^2+(t+s)u^4=0
$$
Let $W$ and $U$ be elements of $k[x,y]$ such that $W+\h=w$ and
$U+\h=u$. Then, thanks to the previous equation:
\begin{equation}\label{Ecu 1}
W^2+(t+s)U^4=hG
\end{equation}
for some $G\in k[x,y]$. By applying the partial derivatives
$\partial_s$ and $\partial_t$ to (\ref{Ecu 1}), we obtain:
$$
\begin{array}{rc}
\partial_t:&U^4=x^4G+h\partial_t(G)\\
\partial_s:&U^4=y^4G+h\partial_s(G).
\end{array}
$$
Then, if $g:=G+\h$, we have the following equalities in $A$:
$$
\left.
\begin{array}{rc}
\partial_t:&u^4=x^4g\\
\partial_s:&u^4=y^4g
\end{array} \right\} \Rightarrow (x^4-y^4)g=0.
$$
Since $A$ is a domain and $x^4\neq y^4$, $g=0$, so $u=u_1=v_1=0$.
Then, we can not integrate any non-zero $k$-derivation until length 4, i.e.
$\IDer_k(A;4)=0$ and $L\otimes_k\IDer_k(A;4)=0$.
\medskip

To prove that $\IDer_L(A_L;4)$ is not zero, we calculate
$\IDer_L(\log \h^e;4)$. Thanks to Proposition \ref{I^p
-logarithmicas}, it is enough to calculate $\IDer_L(\log H;2)$ where
$H=x+y+t^{1/2}x^2+s^{1/2}y^2$. Note that $J^0=\langle 1\rangle$ so,
by Proposition \ref{Narvaez Prop 2.2.4}, any $I$-logarithmic
$k$-derivation is integrable. It is easy to see that $\Der_L(\log
H)=\langle
\partial_x+\partial_y,H\partial_x\rangle$. Hence, thanks to
Proposition \ref{Anillo polinomios, logaritmico, map sobrey},
$\IDer_L(A_L;4)=\langle
\overline\delta_1,\overline\delta_2\rangle\neq 0$ where
$\overline\delta_1$ (resp. $\overline \delta_2$) is the derivation
induced by $\partial_x+\partial_y$ (resp.
$H\partial_x$) in the quotient. Therefore, $\Phi_4^{L,A}$
is not surjective.
\vspace{-0.5cm}\begin{flushright}$\square$\end{flushright}

We have seen that $\Phi_m^{L,A}$ is not surjective in general,
however, if we assume that $L$ is not only flat, but satisfies some
additional conditions, then $\Phi^{L,A}_m$ will be surjective for all $m\geq 1$
and any finitely generated $k$-algebra $A$.

\subsubsection{The extension is a polynomial ring}

In this section, we assume that $k$ is a commutative ring and
$L:=k[t_i\ |\ i\in \I]$ is a polynomial ring in an arbitrary number
of variables. We define $\N^{(\I)}=\{\alpha:=(\alpha_i)_{i\in \I}\
|\ \alpha_i=0 \mbox{ \it  except for a finite number of } i\in \I\}$
and, for $\alpha\in \N^{(\I)}$, we put $t^\alpha=\prod_{i\in \I}
t_i^{\alpha_i}$. We start with some numerical results.

The following lemma is clear.
\begin{lem}\label{Parte entera} Let $n\leq m$ be two positive integers. We have the following properties.
\begin{itemize}
\item[a.] $\left(\lfloor m/n \rfloor+1\right)n-1\geq m$.
\item[b.] If $m\neq 0\mod n$, then $\lfloor m/n\rfloor=\lfloor
(m-1)/n\rfloor$. Otherwise, $\lfloor m/n\rfloor=\lfloor
(m-1)/n\rfloor+1$.
\item[c.] If $n<m$ such that $m=0\mod n$. Then, there exists a prime
factor of $m$ which divides $m/n$.
\end{itemize}
\end{lem}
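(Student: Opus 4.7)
The plan is to handle the three parts independently by direct calculation with the Euclidean division $m = qn + r$, $0 \le r < n$, where $q = \lfloor m/n \rfloor$.

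For part (a), I would simply observe that $(q+1)n - 1 = qn + (n-1) \ge qn + r = m$ since $r \le n-1$. This is immediate and needs no further work.

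For part (b), I would split into the two cases according to whether $n$ divides $m$. If $m \not\equiv 0 \pmod n$, then $r \ge 1$, so writing $m - 1 = qn + (r-1)$ with $0 \le r-1 < n$ gives $\lfloor (m-1)/n \rfloor = q = \lfloor m/n \rfloor$. If instead $m \equiv 0 \pmod n$, then $r = 0$ and $m - 1 = (q-1)n + (n-1)$ with $0 \le n - 1 < n$ (note $q \ge 1$ here because $m \ge n \ge 1$ and $n | m$), so $\lfloor (m-1)/n \rfloor = q - 1$, giving the stated shift by one.

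For part (c), I would use that $n < m$ together with $n \mid m$ forces the quotient $m/n$ to be a positive integer strictly greater than $1$. Hence $m/n$ admits at least one prime divisor $p$; then $p \mid m/n$ and $m/n \mid m$ give $p \mid m$, so $p$ is a prime factor of $m$ dividing $m/n$, which is exactly what is claimed. None of the three parts presents any real obstacle — these are purely arithmetic facts packaged to be invoked in the inductive arguments of the next section, so the whole proof is a few lines of elementary number theory with no substantive difficulty.
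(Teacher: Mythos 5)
Your proof is correct in all three parts; the paper itself gives no argument (it declares the lemma ``clear''), and your Euclidean-division verification is exactly the elementary filling-in that is intended. Nothing further is needed.
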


\begin{Def}\label{Pn}
Let $n$ be a positive integer. We define
$$
\PR_n=\bigcup_{q \mbox{ factor prime of } n }q\N^{(\I)}.
$$
\end{Def}

\begin{lem}\label{Conjunto disjunto}
Let $n,s$ be two positive integers such that $n\neq s$. Then, there
do not exist $\alpha\in \N^{(\I)}\setminus \PR_n$ and $\eta\in
\N^{(\I)}\setminus \PR_s$ such that $\alpha s=\eta n$.
\end{lem}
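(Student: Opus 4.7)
The plan is to settle this by a standard coprimality analysis after stripping off the greatest common divisor. First I would set $d = \gcd(n, s)$ and write $n = d n'$, $s = d s'$ with $\gcd(n', s') = 1$. The hypothesis $n \neq s$ immediately rules out $n' = s' = 1$, so at least one of $n'$, $s'$ is strictly greater than $1$; this is the only place where the assumption $n\neq s$ enters the argument.

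The key step is to exploit the equation $\alpha s = \eta n$ componentwise. For each index $i \in \I$ the coordinate relation $\alpha_i s = \eta_i n$ becomes, after dividing through by $d$, the equation $\alpha_i s' = \eta_i n'$. Since $\gcd(n', s') = 1$, Euclid's lemma then forces $n' \mid \alpha_i$ and $s' \mid \eta_i$ for every index $i$. This is the one substantive observation in the proof, but it is entirely routine.

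From here the conclusion is immediate by a case split. If $n' > 1$, pick any prime factor $q$ of $n'$; since $n' \mid n$, the prime $q$ is also a prime factor of $n$, and the relation $q \mid \alpha_i$ for every $i$ places $\alpha$ in $q\N^{(\I)} \subseteq \PR_n$, contradicting $\alpha \notin \PR_n$. If instead $s' > 1$, the symmetric argument applied to $\eta$ and $s$ yields $\eta \in \PR_s$, again a contradiction. I do not expect any genuine obstacle, as the whole argument amounts to a Bezout/coprimality computation; the only thing to keep in mind is the degenerate situations $n = 1$ or $s = 1$ (where $\PR_1 = \emptyset$), which are absorbed transparently by the same case split since whichever of $n', s'$ corresponds to the value greater than $1$ supplies the required prime.
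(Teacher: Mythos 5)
Your proof is correct and follows essentially the same route as the paper: strip off the common factor of $n$ and $s$, then use a prime dividing one of the coprime quotients to force all components of the corresponding multi-index to be divisible by it, contradicting membership in $\N^{(\I)}\setminus \PR_n$ (or $\setminus\PR_s$). The only cosmetic difference is that you divide the componentwise equation by $\gcd(n,s)$ directly and invoke Euclid's lemma, whereas the paper cancels common primes from $n$ and $s$ themselves (implicitly using $\PR_{n/q}\subseteq \PR_n$) before locating the offending prime.
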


\begin{proof} Suppose that there exist $\alpha\in
\N^{(\I)}\setminus \PR_n$ and $\eta\in \N^{(\I)}\setminus \PR_s$ such
that $\alpha s=\eta n$. If there were such a prime that divides $n$
and $s$, then we could simplify it. So, we can assume that $s$ and
$n$ do not have prime factors in common. Now, as $s$ and $n$ are not
the same, one of them, we say $s$, has a prime factor $q$ such that
does not divide to the another one, in this case $n$. Since $\alpha
s=\eta n$, we have that $\alpha_i s=\eta_in$ for all $i\in \I$. So,
$q$ divide $\eta_i$ for all $i\in \I$. Then $\eta=q\eta'\in \PR_s$
and we have a contradiction.
\end{proof}

Fix $m>1$ an integer and consider $m=q_1^{a_1}\cdots q_s^{a_s}$ its prime
factorization, i.e. for all $j=1,\ldots s$, $q_j$ is a prime,
$a_j>0$ and  $q_j\neq q_i$ if $i\neq j$. Let us consider $\beta\in
\PR_m$. Then, we can write $\beta=q_1^{b_1}\cdots q_s^{b_s}\eta$
where $b_j\geq 1$ for some $j\in\{1,\ldots, s\}$ and $\eta\in \N^{(\I)}$
such that $q_j\nmid \eta$ for any $j=1,\ldots,s$, i.e. for all $j$
there exists $\eta_{i_j}$ with $i_j\in \I$ such that $q_j\nmid
\eta_{i_j}$. We can assume, without loss of generality, that there
exists an integer $l_\beta$ such that $0\leq l_\beta\leq s$ and
$a_j>b_j$ for all $j\leq l_\beta$ and $a_j\leq b_j$ for all
$j>l_\beta$. Then, we define
$$
n_\beta=\left\{
\begin{array}{lcc}
1&\mbox{if } l_\beta=0\\
q_1^{a_1-b_1}\cdots q_{l_\beta}^{a_{l_\beta}-b_{l_\beta}}&\mbox{if
}l_\beta\geq 1.
\end{array}
\right.
$$

\begin{lem}\label{Existencia de una n}
For each $\beta\in \PR_m$, there exists a unique $n\in \N$ with
$1\leq n<m$ such that $m=0\mod n$ and $\beta n/m\not\in\PR_n$.
\end{lem}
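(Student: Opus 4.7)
The natural plan is to show that $n=n_\beta$ (the positive integer defined just before the lemma) is the only value that works, by computing $\beta n_\beta/m$ explicitly and then reversing the argument to prove uniqueness.

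For existence, I first observe that $n_\beta \mid m$ is immediate from its definition. For $1\leq n_\beta<m$: if $l_\beta=0$, then $n_\beta=1<m$; if $l_\beta\geq 1$, then $n_\beta=q_1^{a_1-b_1}\cdots q_{l_\beta}^{a_{l_\beta}-b_{l_\beta}}$, and the hypothesis $\beta\in \PR_m$ guarantees some $b_j\geq 1$, making $n_\beta<m$. A direct computation yields
\[
\frac{m}{n_\beta}=q_1^{b_1}\cdots q_{l_\beta}^{b_{l_\beta}}q_{l_\beta+1}^{a_{l_\beta+1}}\cdots q_s^{a_s},
\]
which divides $\beta=q_1^{b_1}\cdots q_s^{b_s}\eta$ since $b_j\geq a_j$ for $j>l_\beta$, so $\beta n_\beta/m\in\N^{(\I)}$ and equals $q_{l_\beta+1}^{b_{l_\beta+1}-a_{l_\beta+1}}\cdots q_s^{b_s-a_s}\eta$. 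The prime factors of $n_\beta$ are exactly $\{q_j : j\leq l_\beta\}$, and for each such $q_j$ the scalar prefactor of $\beta n_\beta/m$ is coprime to $q_j$, while $\eta$ has some component $\eta_{i_j}$ with $q_j\nmid \eta_{i_j}$ by construction; hence $\beta n_\beta/m\notin q_j\N^{(\I)}$ for every prime factor $q_j$ of $n_\beta$, proving $\beta n_\beta/m\notin \PR_{n_\beta}$.

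For uniqueness, I take any $n\mid m$ with $1\leq n<m$ satisfying $\beta n/m\notin \PR_n$, and write $n=q_1^{c_1}\cdots q_s^{c_s}$ with $0\leq c_j\leq a_j$. Requiring $\beta n/m\in\N^{(\I)}$ forces $(m/n)\mid\beta$, which by comparing $q_j$-valuations and using that $\min_i v_{q_j}(\eta_i)=0$ gives $c_j\geq a_j-b_j$ for every $j$. Hence $\beta n/m = q_1^{b_1+c_1-a_1}\cdots q_s^{b_s+c_s-a_s}\eta$, and the minimum $q_j$-valuation of its components is $b_j+c_j-a_j$. For each prime factor $q_j$ of $n$ (i.e.\ $c_j\geq 1$), the condition $\beta n/m\notin q_j\N^{(\I)}$ forces $b_j+c_j-a_j=0$, so $c_j=a_j-b_j$; in particular this requires $a_j>b_j$, i.e.\ $j\leq l_\beta$. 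Conversely, for $j>l_\beta$ one has $a_j-b_j\leq 0$, so $c_j\geq 1$ is impossible, forcing $c_j=0$; and for $j\leq l_\beta$ the inequality $c_j\geq a_j-b_j\geq 1$ already makes $q_j$ a prime factor of $n$, hence $c_j=a_j-b_j$ by the previous step. This pins down $n=n_\beta$.

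The only delicate point is keeping the two sets of indices $\{j\leq l_\beta\}$ and $\{j>l_\beta\}$ straight when passing between the existence and uniqueness arguments: the key observation, which drives both directions, is that the presence of $\eta$ (with components coprime to every $q_j$) means the $q_j$-valuation of $\beta n/m$ coincides with $b_j+c_j-a_j$, converting the logarithmic-type condition $\beta n/m\notin q_j\N^{(\I)}$ into a sharp equality $c_j=a_j-b_j$.
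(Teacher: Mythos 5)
Your proof is correct. The existence half is essentially identical to the paper's: both take $n=n_\beta$, compute $\beta n_\beta/m = q_{l_\beta+1}^{b_{l_\beta+1}-a_{l_\beta+1}}\cdots q_s^{b_s-a_s}\eta$, and use the fact that each prime factor $q_j$ of $n_\beta$ (namely $q_1,\dots,q_{l_\beta}$) fails to divide some component of $\eta$. Where you diverge is uniqueness: the paper disposes of it in one line by observing that a second admissible $n'$ would produce $\alpha\in\N^{(\I)}\setminus\PR_n$ and $\alpha'\in\N^{(\I)}\setminus\PR_{n'}$ with $\alpha n'=\alpha' n$, contradicting Lemma \ref{Conjunto disjunto}, whereas you give a self-contained $q_j$-valuation computation that directly forces $c_j=a_j-b_j$ for $j\le l_\beta$ and $c_j=0$ for $j>l_\beta$, hence $n=n_\beta$. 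Your route costs a bit more bookkeeping but does not rely on the preceding lemma and has the side benefit of identifying the unique $n$ explicitly as $n_\beta$; the paper's route is shorter given that Lemma \ref{Conjunto disjunto} has already been established for exactly this purpose. Both arguments are sound.
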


\begin{proof} We write $\beta=q_1^{b_1}\cdots
q_s^{b_s}\eta$, where $\eta\in \N^{(\I)}$ such that $q_j\nmid\eta$
for any $j=1,\ldots,s$ and $b_j\geq 1$ for some $j\in \{1,\ldots,s\}$.
We take $n=n_\beta$. It is obvious that $n$ divides $m$ and $1\leq
n<m$. We denote $l:=l_\beta$ to simplify the notation. We put
$$
\alpha:=\dfrac{\beta n}{m}=\dfrac{\eta q_1^{b_1}\cdots
q_s^{b_s}n}{q_1^{a_1}\cdots q_s^{a_s }}.
$$
If $l=0$, then $n=1$ and $\PR_1=\emptyset$ so, $\alpha\not\in\PR_n$
(note that $\alpha\in \N^{(\I)}$ because if $l=0$, then $b_j\geq
a_j$ for all $j=1,\ldots,s$). If $l\geq 1$, then
$$
\alpha=\dfrac{\eta q_1^{b_1}\cdots q_s^{b_s} q_1^{a_1-b_1}\cdots
q_l^{a_l-b_l}}{q_1^{a_1}\cdots q_s^{a_s}}= \dfrac{\eta
q_1^{a_1}\cdots q_l^{a_l} q_{l+1}^{b_{l+1}}\cdots
q_s^{b_s}}{q_1^{a_1}\cdots
q_s^{a_s}}=q_{l+1}^{b_{l+1}-a_{l+1}}\cdots q_s^{b_s-a_s}\eta.
$$
Note that set of primes which divide $n$ is
$\{q_1,\ldots,q_l\}$. Hence, $q_j\nmid \alpha$ for all $j=1,\ldots,l$
(recall that $q_j\nmid\eta$). So, $\alpha\not\in \PR_n$.

Now, let us suppose that there exists another $n'\in \N$ holding the
lemma, in particular $\alpha':=\beta n'/m\not\in \PR_{n'}$. Then,
$\alpha n'= \alpha' n$ and we have a contradiction by Lemma
\ref{Conjunto disjunto}.
\end{proof}


\begin{teo}\label{Expresion de D en RL}
Let $m\geq 1$ be an integer and $L=k[t_i\ |\mbox{ }i\in \I]$ a
polynomial ring. Let us consider $D\in \HS_L(R_L;m)$. Then, for all
$n=1,\ldots, m$ there exist a finite subset $L_n$ of
$\N^{(\I)}\setminus \PR_n$ and $N^{n,\alpha}\in \HS_k(R)$ for each
$\alpha\in L_n$ such that
$$
D=\circ_{n=1}^m\left(\circ_{\alpha\in
L_n}\left(\psi_{\alpha}^{n,m}\bullet \widetilde
{N^{n,\alpha}}\right)\right)
$$
where $\psi^{n,m}_\alpha:R_L[|\mu|]\rightarrow R_L[|\mu|]_m$ is the
substitution map of constant coefficients given by
$\psi_\alpha^{n,m}(\mu)=t^\alpha\mu^n$.
\end{teo}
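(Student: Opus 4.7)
The plan is to proceed by induction on the length $m$. In the base case $m=1$, any $D\in \HS_L(R_L;1) \equiv \Der_L(R_L)$ is determined by the elements $D_1(x_j) \in R_L$; writing $D_1(x_j) = \sum_\alpha t^\alpha r_j^\alpha$ with $r_j^\alpha \in R$ (finite support) and choosing $N^{1,\alpha} \in \HS_k(R)$ with $N^{1,\alpha}_1(x_j) = r_j^\alpha$ via Proposition \ref{HSenpolise extiende}, we take $L_1 \subseteq \N^{(\I)} = \N^{(\I)} \setminus \PR_1$ to be the support of this expansion, and the composition of the pieces $\psi^{1,1}_\alpha \bullet \widetilde{N^{1,\alpha}} = (\Id, t^\alpha \widetilde{N^{1,\alpha}_1})$ recovers $D$.

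For the inductive step, apply the result at length $m-1$ to $\tau_{m,m-1}(D)$ to obtain data $(L_n, N^{n,\alpha})_{1 \le n \le m-1}$. The same data, read with $\psi_\alpha^{n,m}$ in place of $\psi_\alpha^{n,m-1}$, defines an HS-derivation $D' \in \HS_L(R_L;m)$ with $\tau_{m,m-1}(D') = \tau_{m,m-1}(D)$: indeed truncation commutes with composition by Corollary \ref{La truncation y el desplazamiento iterado}, and with substitution by Lemma \ref{composicion y sustituciones}, since $\pi_{m,m-1}\circ \psi_\alpha^{n,m} = \psi_\alpha^{n,m-1}$. Lemma \ref{La ultima componente difiere en una derivacion} then produces a unique $\delta \in \Der_L(R_L)$ with $D = D' \circ (\Id,\delta)[m]$, and we expand $\delta(x_j) = \sum_\beta t^\beta r_j^\beta$ with $r_j^\beta \in R$.

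The core step is to split this $\delta$ according to whether $\beta \in \PR_m$. For each $\beta \in \N^{(\I)} \setminus \PR_m$ with some $r_j^\beta \ne 0$, adjoin $\beta$ to $L_m$ and choose $N^{m,\beta}\in \HS_k(R)$ with $N^{m,\beta}_1(x_j) = r_j^\beta$; the resulting $E^m := \circ_{\beta\in L_m}\psi^{m,m}_\beta \bullet \widetilde{N^{m,\beta}}$ equals $\bigl(\Id,\sum_{\beta\notin\PR_m} t^\beta\widetilde{\eta_\beta}\bigr)[m]$, where $\eta_\beta(x_j):=r_j^\beta$. For each $\beta \in \PR_m$ with some $r_j^\beta \ne 0$, Lemma \ref{Existencia de una n} furnishes a unique $n_\beta$ with $n_\beta\mid m$, $n_\beta < m$, and $\alpha_\beta := \beta n_\beta/m \in \N^{(\I)} \setminus \PR_{n_\beta}$; we absorb the $t^\beta$-term by replacing $N^{n_\beta,\alpha_\beta}$ (adjoining $\alpha_\beta$ to $L_{n_\beta}$, with components below position $m/n_\beta$ vanishing, if it is new) by $N^{n_\beta,\alpha_\beta}\circ (\Id,\eta_\beta)[m/n_\beta]$. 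Because $\psi_{\alpha_\beta}^{n_\beta,m}$ has constant coefficients over $L$, part 1 of Lemma \ref{composicion y sustituciones} shows that this modification post-composes the affected piece by $(\Id, t^\beta\widetilde{\eta_\beta})[m]$; since such length-$m$ end pieces commute with every element of $\HS_L(R_L;m)$ by Lemma \ref{Composicion conmuta con las derivaciones en la ultima componente}, the cumulative effect on $D'$ is $D'_{\text{new}} = D' \circ \bigl(\Id, \sum_{\beta\in\PR_m} t^\beta\widetilde{\eta_\beta}\bigr)[m]$, and consequently $D = D'_{\text{new}} \circ E^m$, which is the desired decomposition at length $m$. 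The main obstacle is the bookkeeping: altering only the $(m/n_\beta)$-th component of $N^{n_\beta,\alpha_\beta}$ must preserve all components of $D'$ at positions $<m$ (so that $\tau_{m,m-1}(D)$ stays fixed), and distinct $\beta\in\PR_m$ must not collide---both hold thanks to the uniqueness in Lemma \ref{Existencia de una n}, which makes $\beta\mapsto(n_\beta,\alpha_\beta)$ injective.
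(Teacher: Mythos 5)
Your proof is correct and follows essentially the same route as the paper's: induction on $m$, reduction to the last component via Lemma \ref{La ultima componente difiere en una derivacion}, and absorption of the terminal factor $(\Id,\delta)[m]$ into the pieces indexed by divisors of $m$ using Lemma \ref{Existencia de una n}, the constant-coefficients property of $\psi^{n,m}_\alpha$, and the commutation of end pieces. The only cosmetic difference is that the paper replaces your $(\Id,\eta_\beta)[m/n_\beta]$ (which has finite length, so $N^{n_\beta,\alpha_\beta}\circ(\Id,\eta_\beta)[m/n_\beta]$ is not literally in $\HS_k(R)$) by $M^{n,\alpha}[m/n]$ for an infinite-length integral $M^{n,\alpha}$ of $\delta_{\alpha(m/n)}$ supplied by Proposition \ref{HSenpolise extiende} --- a one-line adjustment your argument also needs, and which is harmless since $\psi^{n,m}_\alpha\bullet{}$ only sees components up to $m/n$.
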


\begin{proof} First, observe that, if $E\in \HS_L(R_L;m)$
then, $\psi_\alpha^{n,m} \bullet
E=\tau_{\infty,m}\left(\left(t^\alpha\bullet E\right)[n]\right)$. We
prove this theorem by induction on $m$. Assume that $m=1$ then,
$D=(\Id,D_1)\in \HS_L(R_L;1)$. Since $L$ is free over $k$ and
$\{t^\alpha\mbox{ }|\mbox{ } \alpha\in \N^{(\I)}\}$ is a $k$-basis
of $L$, from the proof of Proposition \ref{prop:free-extension-der},
$D_1\in \Der_L(R_L)$ can be written as $D_1=\sum_{\alpha\in L_1}
t^\alpha \widetilde{\delta_\alpha}$ where $L_1$ is a finite subset of $\N^{(\I)}$ and $\delta_\alpha\in
\Der_k(R)$ for each $\alpha\in L_1$. Let us
consider $N^{1,\alpha}$ an integral of $\delta_\alpha$ for
$\alpha\in L_1$. Then, $\widetilde{N^{1,\alpha}}\in \HS_L(R_L)$ is
an integral of $\widetilde\delta_\alpha$ (see Corollary
\ref{Integrales_y_extension}). Hence,
$$
D=\circ_{\alpha\in L_1} \left(t^\alpha\bullet (\Id,
\widetilde{\delta_\alpha})\right)=\circ_{\alpha\in L_1}
\left(\tau_{\infty,1}\left(t^\alpha\bullet
\widetilde{N^{1,\alpha}}\right)\right)=\circ_{\alpha\in
L_1}\left(\psi^{1,1}_\alpha \bullet \widetilde{N^{1,\alpha}}\right)
$$
(note that the order of the composition in this equality is not
important because $\HS_L(R_L;1)\equiv \Der_L(R)$ is a commutative
group) and we have the result when $m=1$. Let us assume that the
theorem is true for any HS-derivation of length $m-1$ and we will
prove it for $D\in \HS_L(R_L;m)$. By induction hypothesis, for all
$n=1,\ldots,m-1$, there exist a finite subset $L_n'$  of
$\N^{(\I)}\setminus \PR_n$ and  $N^{n,\alpha}\in \HS_k(R)$ for all
$\alpha\in L'_n$ such that
\begin{equation}\label{Pol. Extension 1}
\tau_{m,m-1}(D)=\circ_{n=1}^{m-1} \left(\circ_{\alpha\in L_n'}
\left(\psi^{n,m-1}_\alpha \bullet
\widetilde{N^{n,\alpha}}\right)\right).
\end{equation}
We define
$$
E:=\circ_{n=1}^{m-1}\left(\circ_{\alpha\in L'_n}
\left(\psi_\alpha^{n,m}\bullet
\widetilde{N^{n,\alpha}}\right)\right)
$$
where the composition is taken in the same order that in (\ref{Pol.
Extension 1}). Note that $\psi^{n,m-1}_\alpha=\tau_{m,m-1}\circ
\psi^{n,m}_\alpha$, and thanks to Lemma \ref{composicion y
sustituciones} and Corollary \ref{La truncation y el desplazamiento
iterado}, we have that:
$$
\begin{array}{rl}
\tau_{m,m-1}(E)=&\displaystyle
\circ_{n=1}^{m-1}\left(\circ_{\alpha\in L'_n}
\left(\tau_{m-1}\bullet\left(\psi_\alpha^{n,m}\bullet
\widetilde{N^{n,\alpha}}\right)\right)\right)
=\displaystyle\circ_{n=1}^{m-1}\left(\circ_{\alpha\in L'_n}
\left(\left(\tau_{m,m-1}\circ\psi_\alpha^{n,m}\right)\bullet
\widetilde{N^{n,\alpha}}\right)\right)=\tau_{m,m-1}(D).
\end{array}
$$
Then, by Lemma \ref{La ultima componente difiere en una derivacion},
$D=E\circ (\Id,\delta)[m]$ where $\delta\in \Der_L(R_L)$. From the
proof of Proposition \ref{prop:free-extension-der},
$\delta=\sum_{\beta\in \J} t^\beta \widetilde{\delta_\beta}$ where
$\J$ is a finite subset of $\N^{(\I)}$ and $\delta_\beta\in
\Der_k(R)$ for all $\beta\in \J$. We denote $\Gamma=\{n\in \N\mbox{
}|\mbox{ } 1\leq n\leq m-1, \mbox{ } m=0\mod n\}$. For all $n \in
\Gamma$, we define
$$
\begin{array}{ccc}
\J_n:=\left\{\beta\in \J\mbox{ }|\mbox{ } \beta=\alpha (m/n) \mbox{
for some }\alpha \in L'_n\right\}&\mbox{ and }& \mathcal
L_m=\J\setminus \PR_m.
\end{array}
$$

{\bf Claim 1.} {\it For all $n,s\in \Gamma$ such that $n\neq s$,
then $\J_n\cap \J_s=\emptyset$.}

\smallskip

 Let us suppose that there exists
$\beta\in \J_n\cap\J_s$. In this case, there exist $\alpha\in
L'_n\subseteq \N^{(\I)} \setminus \PR_n$ and $\eta\in L'_s\subseteq
\N^{(\I)}\setminus \PR_s$ such that $\beta=\alpha(m/n)=\eta(m/s)$,
i.e. $\alpha s=\eta n$ and this can not happen by Lemma
\ref{Conjunto disjunto}. \medskip

{\bf Claim 2.} {\it $\mathcal L_m\cap \J_n=\emptyset$ for all $n\in
\Gamma$.}

\smallskip

By Lemma \ref{Parte entera} c., there exists a prime factor, $q$, of
$m$ that divides $m/n$. Assume that $\beta\in \J_n$. Then, we have
that $\beta=\alpha(m/n)$ for some $\alpha\in L_n'$. Then, $q|\beta$
so, $\beta\in \PR_m$.
\medskip

Let us write $\J=\sqcup_{n\in \Gamma} \J_n \sqcup \mathcal L_m
\sqcup \overline\J$ where $\overline\J=\J\setminus\left(\sqcup_{n\in
\Gamma}\J_n\sqcup \mathcal L_m \right)$. Observe that
$\overline\J\subseteq \PR_m$ so, from Lemma \ref{Existencia de una
n}, for all $\beta\in \overline\J$, there exists a unique
$n_\beta\in \Gamma$ such that $(\beta n_{\beta})/m\not\in
\PR_{n_\beta}$. Therefore, if we denote $\overline\J_n=\{\beta\in
\overline\J\mbox{ }|\mbox{ } n_\beta=n\}$ for all $n\in \Gamma$, we
can write
$$
\J=\sqcup_{n\in \Gamma} \left(\J_n \sqcup \overline\J_n\right)
\sqcup \mathcal L_m \mbox{ \hspace{0.4cm} and
\hspace{0.4cm}}\delta=\sum_{n\in \Gamma} \sum_{\beta\in \J_n \sqcup
\overline\J_n} t^\beta \delta_\beta+\sum_{\alpha\in \mathcal L_m}
t^\alpha \delta_\alpha.
$$
Now, for each $n\in \Gamma$ we can define
$$\begin{array}{lcc}
\mathcal L'_n=\{\alpha \in L_n'\ |\mbox{ } \alpha(m/n)\in
\J_n\}&\mbox{ and }& \overline{\mathcal L}_n=\{\alpha\in
\N^{(\I)}\setminus L_n'\ |\mbox{ } \alpha(m/n)\in \overline
\J_n\}\nsubseteq \PR_n.
\end{array}
$$
Note that $\mathcal L'_n\cap \overline{\mathcal L}_n=\emptyset$. Let
us denote  $\mathcal L_n=\mathcal L'_n \cup \overline{\mathcal
L}_n$. Hence, we can express
$$
(\Id,\delta)=\circ_{n\in \Gamma} \left(\circ_{\alpha\in \mathcal
L'_n} \left(\Id,
t^{\alpha(m/n)}\widetilde{\delta_{\alpha(m/n)}}\right)
\circ_{\alpha\in\overline{\mathcal
L}_n}\left(\Id,t^{\alpha(m/n)}\widetilde{\delta_{\alpha(m/n)}}\right)\right)
\circ \left(\circ_{\alpha\in \mathcal L_m}
\left(\Id,t^\alpha\widetilde{\delta_\alpha}\right)\right).
$$
By Corollary \ref{La truncation y el desplazamiento iterado} and
Lemma \ref{Relaciones entre operaciones}, for each $n\in \Gamma \cup
\{m\}$ and $\alpha\in \mathcal L_n$, we have that:
$$
\left(\Id,t^{\alpha(m/n)}
\widetilde{\delta_{\alpha(m/n)}}\right)[m]= \left(\left(
t^{\alpha(m/n)}\bullet
\left(\Id,\widetilde{\delta_{\alpha(m/n)}}\right)\right)[m/n]\right)[n]=
\left(t^\alpha \bullet
\left(\left(\Id,\widetilde{\delta_{\alpha(m/n)}}\right)[m/n]\right)
\right)[n].
$$
For each $n\in \Gamma \cup \{m\}$ and $\alpha\in \mathcal L_n$, let
us consider $M^{n,\alpha}\in \HS_k(R)$ an integral of
$\delta_{\alpha(m/n)}$.  We know that $\widetilde{M^{n,\alpha}}$ is
an integral of $\widetilde{\delta_{\alpha(m/n)}}$, so
$\widetilde{M^{n,\alpha}}[m/n]$ is an integral of
$\left(\Id,\widetilde{\delta_{\alpha(m/n)}}\right)[m/n]$. Hence, by
Lemma \ref{Relaciones entre operaciones}, we have
$$
\begin{array}{rl}
\psi^{n,m}_{\alpha} \bullet
\left(\widetilde{M^{n,\alpha}}[m/n]\right)=&\displaystyle
\tau_{\infty,m}\left(\left(t^\alpha\bullet
\left(\widetilde{M^{n,\alpha}}[m/n]\right)\right)[n]\right)=
\left(\tau_{\infty,m/n}\left(t^\alpha\bullet
\left(\widetilde{M^{n,\alpha}}[m/n]\right)\right)\right)[n]\\[0.2cm]
=&\displaystyle\left(t^\alpha \bullet
\tau_{\infty,m/n}\left(\widetilde{M^{n,\alpha}}[m/n]\right)\right)[n]=\left(t^\alpha
\bullet
\left(\left(\Id,\widetilde{\delta_{\alpha(m/n)}}\right)[m/n]\right)
\right)[n].
\end{array}
$$
To simplify the following expression, we put $\overline{\mathcal
L}_n=\mathcal L'_n=\emptyset$ for all $n\in
\{1,\ldots,m-1\}\setminus \Gamma$. Moreover, for all $n\in
\{1,\ldots,m-1\}$, if $\alpha\in L_n'\setminus \mathcal L'_n$ then
we consider $\delta_{\alpha(m/n)}=0$ and $M^{n,\alpha}=\II\in
\HS_k(R)$ an integral of $\delta_{\alpha(m/n)}$. Thanks to Lemmas
\ref{Composicion conmuta con las derivaciones en la ultima
componente} and \ref{composicion y sustituciones} and the previous
equation, we can write:
$$
\begin{array}{rl}
D=&\displaystyle\circ_{n=1}^{m-1}\left(
 \circ_{\alpha\in L'_n}
  \left(\psi^{n,m}_\alpha\bullet \widetilde{N^{n,\alpha}} \circ
  \left(\Id,t^{\alpha(m/n)}\widetilde{\delta_{\alpha(m/n)}}\right)[m]\right)
  \circ \left(\circ_{\alpha\in \overline{\mathcal L}_n}
  \left(\Id,t^{\alpha(m/n)}\widetilde{\delta_{\alpha(m/n)}}\right)[m]\right)
  \right)
  \circ\\[0.2cm]&\displaystyle\circ
  \left(\circ_{\alpha\in \mathcal L_m}
  \left(\Id,t^\alpha\widetilde{\delta_\alpha}\right)[m]\right)=\\[0.2cm]
=&\displaystyle\circ_{n=1}^{m-1}\left(
 \circ_{\alpha\in L'_n}
   \left(\psi^{n,m}_\alpha\bullet \widetilde{N^{n,\alpha}} \circ
    \psi^{n,m}_\alpha \bullet (\widetilde{M^{n,\alpha}}[m/n])\right)
  \circ_{\alpha\in \overline{\mathcal L}_n} \left(
  \psi^{n,m}_\alpha \bullet (\widetilde{M^{n,\alpha}}[m/n])\right)\right)\\&
\displaystyle\circ \left(\circ_{\alpha\in \mathcal L_m}
\left(\psi^{m,m}_\alpha \bullet
\widetilde{M^{m,\alpha}}\right)\right)\\[0.2cm]
=&\displaystyle\circ_{n=1}^{m-1}\left(
 \circ_{\alpha\in L'_n}
   \left(\psi^{n,m}_\alpha\bullet \left(\widetilde{N^{n,\alpha}} \circ
    \widetilde{M^{n,\alpha}}[m/n]\right)\right)
  \circ_{\alpha\in \overline{\mathcal L}_n} \left(
  \psi^{n,m}_\alpha \bullet (\widetilde{M^{n,\alpha}}[m/n])\right)\right)
\circ \left(\circ_{\alpha\in \mathcal L_m} \left(\psi^{m,m}_\alpha
\bullet
\widetilde{M^{m,\alpha}}\right)\right).\\
\end{array}
$$
Thanks to Lemma \ref{Propiedades Extension} (2),
$\widetilde{M^{n,\alpha}}[m/n]$ is the extension of the
HS-derivation $M^{n,\alpha}[m/n]$ and, by Lemma \ref{Propiedades
Extension} (1), $\widetilde{N^{n,\alpha}}\circ
\widetilde{M^{n,\alpha}}[m/n]$ is the extension of $N^{n,\alpha}
\circ M^{n,\alpha}[m/n]$. Therefore, if we denote $L_n=L_n'\cup
\overline{\mathcal L}_n\subseteq \N^{(\I)}\setminus \PR_n$ and
$L_m=\mathcal L_m$, we have the theorem.
\end{proof}

\begin{teo}\label{Expresion de D, trascen, I-log}
Let $m\geq 1$ be an integer, $L=k[t_i\mbox{ }|\mbox{ }i\in \I]$ a
polynomial ring, $I\subseteq R$ an ideal and $D\in \HS_L(\log
I^e;m)$. For all $n=1,\ldots, m$, let $L_n$ be a finite subset of
$\N^{(\I)}\setminus \PR_n$ and $N^{n,\alpha}\in \HS_k(R)$ for all
$\alpha\in L_n$ such that
$$ D=\circ_{n=1}^m\left(\circ_{\alpha\in
L_n}\left(\psi_{\alpha}^{n,m}\bullet \widetilde
{N^{n,\alpha}}\right)\right)
$$
where $\psi_\alpha^{n,m}:R_L[|\mu|]\to R_L[|\mu|]_m$ is the
substitution map given by $\psi^{n,m}_\alpha(\mu)=t^\alpha \mu^n$.
Then, for all $n=1,\ldots, m$ and $\alpha\in L_n$, $N^{n,\alpha}\in
\HS_k(R)$ is an $\lfloor m/n\rfloor-I$-logarithmic HS-derivation.
\end{teo}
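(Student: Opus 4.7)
The plan is to argue by induction on $m$, repeatedly using the $R$-module decomposition $R_L=\bigoplus_{\beta\in\N^{(\I)}}t^\beta R$ and its refinement $I^e=\bigoplus_{\beta\in\N^{(\I)}}t^\beta I$ to read off coefficients monomial by monomial. For the base case $m=1$ one has $D_1=\sum_{\alpha\in L_1}t^\alpha\widetilde{N^{1,\alpha}_1}$, so $D_1(f)\in I^e$ for $f\in I$ forces $N^{1,\alpha}_1(f)\in I$ by comparing $t^\alpha$-coefficients; since $\lfloor 1/1\rfloor=1$, this is the required conclusion.

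For the induction step, I first truncate. By Lemma \ref{composicion y sustituciones}(2) and Corollary \ref{La truncation y el desplazamiento iterado}(2),
$$
\tau_{m,m-1}(D)=\circ_{n=1}^{m-1}\circ_{\alpha\in L_n}\left(\psi_\alpha^{n,m-1}\bullet\widetilde{N^{n,\alpha}}\right),
$$
since every $n=m$ factor truncates to $\II$. By induction, each $N^{n,\alpha}$ with $1\leq n\leq m-1$ is $\lfloor(m-1)/n\rfloor$-$I$-logarithmic, and Lemma \ref{Parte entera}(b) immediately upgrades this to $\lfloor m/n\rfloor$-$I$-logarithmicity for every $n$ with $n\nmid m$. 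What remains is to prove $N^{n,\alpha}_{m/n}(I)\subset I$ in the cases $n\mid m$, including $n=m$, for which the inductive hypothesis says nothing.

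The crucial step is to isolate $N^{n,\alpha}_{j_0}(f)$, with $j_0:=m/n$, as the unique \emph{new} contribution to the $t^{\alpha j_0}$-coefficient of $D_m(f)$, for $f\in I$. Expanding the composition that defines $D$ and invoking the $L$-linearity of each $\widetilde{N^{n',\alpha'}_{k'}}$, one writes
$$
D_m(f)=\sum t^{\alpha_1 k_1+\cdots+\alpha_l k_l}\bigl(N^{n_1,\alpha_1}_{k_1}\circ\cdots\circ N^{n_l,\alpha_l}_{k_l}\bigr)(f),
$$
summed over ordered tuples of non-identity factors drawn from the given decomposition of $D$, with each $k_i\geq 1$ and $\sum n_ik_i=m$. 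Fix $\alpha\in L_n$ and examine the tuples contributing to $t^{\alpha j_0}$, i.e.\ those with $\sum\alpha_ik_i=\alpha j_0$. If $l\geq 2$ then $n_ik_i<m$ for every $i$; a case split on whether $n_i\mid m$, combined with Lemma \ref{Parte entera}(b), gives $k_i\leq\lfloor(m-1)/n_i\rfloor$, so the induction hypothesis makes every $N^{n_i,\alpha_i}_{k_i}$ preserve $I$ and the whole contribution lies in $I$. In the $l=1$ case, writing the unique factor as $(n',\alpha',k')$, the equations $n'k'=m$ and $\alpha'k'=\alpha j_0$ force $\alpha'n=\alpha n'$; since $\alpha\in\N^{(\I)}\setminus\PR_n$ and $\alpha'\in\N^{(\I)}\setminus\PR_{n'}$, Lemma \ref{Conjunto disjunto} imposes $n'=n$, and hence $(n',\alpha',k')=(n,\alpha,j_0)$. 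Therefore the $t^{\alpha j_0}$-coefficient of $D_m(f)$ equals $N^{n,\alpha}_{j_0}(f)$ plus a sum of elements already known to lie in $I$, and comparing with $D_m(f)\in I^e$ yields $N^{n,\alpha}_{j_0}(f)\in I$.

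The main obstacle is the combinatorial bookkeeping in this last step: one must simultaneously verify that every multi-factor tuple has all its indices inside the range covered by the induction hypothesis, and that the single-factor case coincides exactly with the configuration forbidden by Lemma \ref{Conjunto disjunto}. Both reduce to the arithmetic of $\lfloor\cdot\rfloor$ from Lemma \ref{Parte entera} and the disjointness property of the sets $\N^{(\I)}\setminus\PR_n$; once assembled they leave no room for spurious contributions at $t^{\alpha j_0}$, and the induction closes.
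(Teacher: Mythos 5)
Your proof is correct and follows essentially the same route as the paper: induction on $m$, truncation $\tau_{m,m-1}$ to invoke the inductive hypothesis, Lemma \ref{Parte entera}(b) to dispose of the case $n\nmid m$, and a comparison of $t^{\alpha(m/n)}$-coefficients of $D_m(f)$ made possible by the disjointness statement of Lemma \ref{Conjunto disjunto}. The only (cosmetic) difference is that you expand the composition into tuples by hand and check directly that every multi-factor term lands in $I$, whereas the paper packages those cross-terms as an $I^e$-differential operator via Lemma \ref{lema-resumen}(d) applied to the blocks $E^n$.
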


\begin{proof} We prove this result by induction on $m$. If
$m=1$, we have to prove that $N^{1,\alpha}$ is $1-I$-logarithmic for
all $\alpha\in L_1$, i.e. $N^{1,\alpha}_1\in \Der_k(\log I)$ for all
$\alpha\in L_1$. In this case,
$$
D=\circ_{\alpha\in L_1} \left(\psi_\alpha^{1,1} \bullet \widetilde
{N^{1,\alpha}}\right)=\circ_{\alpha\in L_1}
\left(\tau_{\infty,1}\left(t^\alpha\bullet
\widetilde{N^{1,\alpha}}\right)\right)=\circ_{\alpha\in L_1}
\left(\Id,t^\alpha\left(\widetilde{N^{1,\alpha}}\right)_1\right)
\Rightarrow D_1=\sum_{\alpha\in L_1} t^\alpha
\left(\widetilde{N^{1,\alpha}}\right)_1.
$$
Since $D_1$ is $I^e$-logarithmic, doing the same process of the
proof of Proposition \ref{prop:free-extension-der}, we have that
$N^{n,\alpha}$ is $1-I$-logarithmic. Assume that the theorem is true
for all $I^e$-logarithmic HS-derivation of length $m-1$ and let us
take $D\in \HS_L(\log I^e;m)$ such that
$$
D=\circ_{n=1}^m\left(\circ_{\alpha\in L_n}\left(\psi^{n,m}_\alpha
\bullet \widetilde{N^{n,\alpha}}\right)\right)
$$
where $L_n\subseteq \N^{(\I)}\setminus \PR_n$ is a finite set and
$N^{n,\alpha}\in \HS_k(R)$ for all $\alpha\in L_n$ and
$n=1,\ldots,m$. By Corollary \ref{La truncation y el desplazamiento
iterado}, we have that
$$
\tau_{m,m-1}(D)=\circ_{n=1}^{m-1} \left(\circ_{\alpha\in
L_n}\tau_{m,m-1}\bullet\left( \psi^{n,m}_\alpha \bullet
\widetilde{N^{n,\alpha}}\right)\right)\circ\left(\circ_{\alpha\in
L_m} \tau_{m,m-1}\bullet\left(\psi^{m,m}_\alpha\bullet
\widetilde{N^{n,\alpha}}\right)\right).
$$
From Lemma \ref{composicion y sustituciones}, for any $E\in
\HS_L(R_L)$,  $\tau_{m,m-1} \bullet (\psi^{n,m}_\alpha
\bullet(E))=(\tau_{m,m-1}\circ \psi^{n,m}_\alpha)\bullet
E=\psi^{n,m-1}_\alpha\bullet E$. Moreover,
$\psi^{m,m-1}_\alpha\bullet E=\II$. So,
$$
\tau_{m,m-1}(D)=\circ_{n=1}^{m-1} \left(\circ_{\alpha\in L_n}
\psi^{n,m-1}_\alpha\bullet \widetilde{N^{n,\alpha}}\right).
$$
Hence, since $\tau_{m,m-1}(D)\in \HS_L(\log I^e;m-1)$,  we can apply
the induction hypothesis and we deduce that  $N^{n,\alpha}\in
\HS_k(R)$ is $\lfloor (m-1)/n\rfloor-I$-logarithmic for all
$\alpha\in L_n$ and $n=1,\ldots,m-1$. We define
$$
E^n:=\circ_{\alpha\in L_n} \left(\psi^{n,m}_\alpha \bullet
\widetilde{N^{n,\alpha}}\right) \Rightarrow D=E^1\circ \cdots \circ
E^m
$$
where the order of the composition in $E^n$ is the same that in $D$.
\smallskip

 {\bf Claim.} {\it $E^n$ is $(m-1)-I^e$-logarithmic:}
Since $N^{n,\alpha}$ is $\lfloor (m-1)/n\rfloor-I$-logarithmic, by
Lemma \ref{Propiedades Extension} (3),  $t^\alpha\bullet
\widetilde{N^{n,\alpha}}$ is
$\lfloor(m-1)/n\rfloor-I^e$-logarithmic. From Lemma
\ref{lema-resumen} (a), $\left(t^\alpha\bullet
\widetilde{N^{n,\alpha}}\right)[n]$ is
$\left((\lfloor(m-1)/n\rfloor+1)n-1\right)-I^e$-logarithmic. By
Lemma \ref{Parte entera} a., $m-1< (\lfloor(m-1)/n\rfloor +1)n-1$,
so $\psi^{n,m}_\alpha \bullet \widetilde{N^{n,\alpha}}$ is
$(m-1)-I^e$-logarithmic because  $\psi^{n,m}_\alpha\bullet
\ast=\tau_{\infty,m}\left((t^\alpha\bullet \ast)[n]\right)$. Hence,
by Lemma \ref{lema-resumen} (d), $E^n$ is $(m-1)-I^e$-logarithmic
for all $n$.
\medskip

Let us consider $n\in \{1,\ldots,m\}$ such that $n \nmid m$. Then,
by Corollary \ref{La truncation y el desplazamiento iterado},
$$
E^n=\circ_{\alpha\in L_n} \left(\psi^{n,m}_\alpha\bullet
\widetilde{N^{n,\alpha}}\right)=\circ_{\alpha\in L_n}
\tau_{\infty,m}\left(\left(t^\alpha \bullet
\widetilde{N^{n,\alpha}}\right)[n]\right)=
\tau_{\infty,m}\left(\left(\circ_{\alpha\in L_n}
\left(t^\alpha\bullet
\widetilde{N^{n,\alpha}}\right)\right)[n]\right).
$$
Hence, $E^n_m=0$. Moreover, by Lemma
\ref{Parte entera} b., $\lfloor (m-1)/n\rfloor=\lfloor m/n\rfloor$,
so $N^{n,\alpha}$ is $\lfloor m/n \rfloor-I$-logarithmic. Therefore,
to prove the theorem we have to show that $N^{n,\alpha}$ is
$(m/n)-I$-logarithmic for $n|m$. By Lemma \ref{Parte entera} b.,
$m/n=\lfloor (m-1)/n\rfloor +1$ and, since $N^{n,\alpha}$ is
$\lfloor (m-1)/n\rfloor-I$-logarithmic, it is enough to prove that
$N^{n,\alpha}_{m/n}(I)\subseteq I$. Note that
$$
\left(\psi^{n,m}_\alpha \bullet
\widetilde{N^{n,\alpha}}\right)_m=\left(\tau_{\infty,m}
\left(\left(t^\alpha\bullet
\widetilde{N^{n,\alpha}}\right)[n]\right)\right)_m=t^{\alpha(m/n)}
\left(\widetilde{N^{n,\alpha}}\right)_{m/n}
$$
where
${\left(\widetilde{N^{n,\alpha}}\right)_{m/n}}_{|R}=N^{n,\alpha}_{m/n}$.
Therefore, by Lemma \ref{lema-resumen} (d),
$$
E^n_m=\sum_{\alpha\in L_n}
t^{\alpha(m/n)}(\widetilde{N^{n,\alpha}})_{m/n}+F_n
$$
where $F_n$ is an $I^e$-differential operator. Hence, again by Lemma
\ref{lema-resumen} (d),
$$
D_m=\sum_{n=1}^m E^n_m +F=\sum_{n|m} \sum_{\alpha\in L_n}
t^{\alpha(m/n)}(\widetilde{N^{n,\alpha}})_{m/n}+F_n+F
$$
where $F$ is an $I^e$-differential operator. Since $D_m$ is also an
$I^e$-differential operator, we have that
$$
\sum_{n|m} \sum_{\alpha\in L_n}
t^{\alpha(m/n)}(\widetilde{N^{n,\alpha}})_{m/n} \mbox{ is an
$I^e$-differential operator.}
$$
Observe that $\alpha(m/n)\neq \eta(m/s)$ for all $\alpha\in L_n$ and
$\eta\in L_s$ by Lemma \ref{Conjunto disjunto} because $L_n\subseteq
\N^{(\I)}\setminus \PR_n$ and $L_s\in \N^{(\I)}\setminus \PR_s$.
Doing the same process than in the proof of Proposition
\ref{prop:free-extension-der}, we can deduce that $N^{n,\alpha}\in
\HS_k(R)$ is $\lfloor m/n\rfloor-I$-logarithmic for all $\alpha\in
L_n$ and $n=1,\ldots,m$.
\end{proof}

\begin{teo}
Let $m\geq 1$ be an integer, $L=k[t_i\mbox{ } |\mbox{ } i\in \I]$ a
polynomial ring, $A$ a finitely generated $k$-algebra and $E\in
\HS_L(A_L;m)$. Then, for all $n=1,\ldots,m$ there exist  a finite
subset $L_n\subseteq \N^{(\I)}\setminus \PR_n$ and $M^{n,\alpha}\in
\HS_k(A;\lfloor m/n\rfloor)$ for each $\alpha \in L_n$ such that
$$
E=\circ_{n=1}^m \left(\circ_{\alpha\in L_n}\left(\phi^{n,m}_\alpha
\bullet \widetilde{M^{n,\alpha}}\right)\right)
$$
where $\phi^{n,m}_\alpha:A_L[|\mu|]_{\lfloor m/n \rfloor}\to
A_L[|\mu|]_m$ is the substitution map of constant coefficients given
by $\phi^{n,m}_\alpha(\mu)=t^\alpha \mu^n$.
\end{teo}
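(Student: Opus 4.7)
The plan is to reduce the statement to the previously proved Theorem \ref{Expresion de D, trascen, I-log}, which applies to $I^e$-logarithmic HS-derivations on the polynomial ring $R_L$, by lifting $E$ from $A_L=R_L/I^e$ to $R_L$, decomposing on $R_L$, truncating to the right length, and finally projecting back to $A_L$.

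Since $A$ is finitely generated, write $A=R/I$ with $R=k[x_1,\ldots,x_d]$ and $I\subset R$ an ideal; then $A_L=R_L/I^e$ with $R_L=L[x_1,\ldots,x_d]$. By Proposition \ref{HS log es sobreyectivo} applied over $L$, there exists an $I^e$-logarithmic lift $D\in \HS_L(\log I^e;m)$ with $\Pi^{I^e}_{\HS,m}(D)=E$. Theorem \ref{Expresion de D, trascen, I-log} then supplies, for each $n=1,\ldots,m$, a finite subset $L_n\subseteq \N^{(\I)}\setminus \PR_n$ and HS-derivations $N^{n,\alpha}\in \HS_k(R)$ that are $\lfloor m/n\rfloor$-$I$-logarithmic, and
$$ D=\circ_{n=1}^m\left(\circ_{\alpha\in L_n}\left(\psi_{\alpha}^{n,m}\bullet \widetilde{N^{n,\alpha}}\right)\right). $$

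Next, I would define $\overline{N^{n,\alpha}}:=\tau_{\infty,\lfloor m/n\rfloor}(N^{n,\alpha})\in \HS_k(\log I;\lfloor m/n\rfloor)$ and then
$$ M^{n,\alpha}:=\Pi^{I}_{\HS,\lfloor m/n\rfloor}\left(\overline{N^{n,\alpha}}\right)\in \HS_k(A;\lfloor m/n\rfloor). $$
The remaining task is to verify factor-by-factor the identity
$$ \Pi^{I^e}_{\HS,m}\left(\psi^{n,m}_\alpha\bullet \widetilde{N^{n,\alpha}}\right) = \phi^{n,m}_\alpha\bullet \widetilde{M^{n,\alpha}}, $$
after which applying the group homomorphism $\Pi^{I^e}_{\HS,m}$ to the product formula for $D$ gives the desired expression for $E$, since composition is preserved. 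To prove the displayed identity, I would observe that $\psi^{n,m}_\alpha(\mu^j)=0$ in $R_L[|\mu|]_m$ for $j>\lfloor m/n\rfloor$ (use Lemma \ref{Parte entera}(a) to check $n(\lfloor m/n\rfloor+1)>m$), so $\psi^{n,m}_\alpha$ factors through the truncation to length $\lfloor m/n\rfloor$; combining this with Lemma \ref{Propiedades Extension}(2) yields $\psi^{n,m}_\alpha\bullet \widetilde{N^{n,\alpha}} = \psi'\bullet \widetilde{\overline{N^{n,\alpha}}}$ where $\psi':R_L[|\mu|]_{\lfloor m/n\rfloor}\to R_L[|\mu|]_m$ has the same formula as $\psi^{n,m}_\alpha$. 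Then Lemma \ref{Susti y Poli} interchanges $\psi'$ with the projection $\Pi^{I^e}_{\HS,m}$, and Lemma \ref{Extension y poli} interchanges extension with projection, producing $\phi^{n,m}_\alpha\bullet \widetilde{M^{n,\alpha}}$ on the nose.

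The main obstacle, which is largely bookkeeping, is to keep the various substitution maps, truncations and extensions straight and in particular to justify the factorization of $\psi^{n,m}_\alpha$ through length $\lfloor m/n\rfloor$; once that is established, the computation is a clean combination of the compatibilities already recorded (Lemmas \ref{composicion y sustituciones}, \ref{Propiedades Extension}, \ref{Susti y Poli}, \ref{Extension y poli}) together with the fact that $\Pi^{I^e}_{\HS,m}$ is a group homomorphism.
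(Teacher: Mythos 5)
Your proposal is correct and follows essentially the same route as the paper: lift $E$ to an $I^e$-logarithmic $D$ on $R_L$ via Proposition \ref{HS log es sobreyectivo}, decompose $D$ using the polynomial-ring theorems, factor each $\psi^{n,m}_\alpha$ through the truncation to length $\lfloor m/n\rfloor$, and push the whole expression down to $A_L$ with Lemmas \ref{Susti y Poli} and \ref{Extension y poli}. The only nitpick is that the existence of the decomposition of $D$ comes from Theorem \ref{Expresion de D en RL}, while Theorem \ref{Expresion de D, trascen, I-log} only adds the $\lfloor m/n\rfloor$-$I$-logarithmicity; citing both, as the paper does, is the accurate attribution.
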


\begin{proof}
Since $A$ is a finitely generated $k$-algebra, we can take $A=R/I$
where $R=k[x_1,\ldots,x_d]$ and $I\subseteq R$ an ideal. By
Proposition \ref{HS log es sobreyectivo}, there exists $D\in
\HS_k(\log I^e;m)$ such that $\Pi_{\HS;m}^{I^e}(D)=E$. By theorems
\ref{Expresion de D en RL} and  \ref{Expresion de D, trascen,
I-log}, for all $n=1,\ldots,m$ there exist a finite subset $L_n$ of
$\N^{(\I)}\setminus \PR_n$ and an $\lfloor m/n\rfloor-I$-logarithmic
HS-derivation $N^{n,\alpha}\in \HS_k(R)$ such that
$$
D=\circ_{n=1}^m \left(\circ_{\alpha\in L_n} \left(\psi^{n,m}_\alpha
\bullet \widetilde{N^{n,\alpha}}\right)\right)
$$
where $\psi^{n,m}_\alpha:R_L[\mu|]\to R_L[|\mu|]_m$ is the
substitution map given by $\psi^{n,m}_\alpha(\mu)=t^\alpha \mu^n$.

\smallskip

Let us consider $\theta^{n,m}_\alpha:R_L[|\mu|]_{\lfloor
m/n\rfloor}\to R_L[|\mu|]_m$ the substitution map given by
$\theta^{n,m}_\alpha(\mu)=t^\alpha \mu^n$. Then,
$\psi^{n,m}_\alpha=\theta^{n,m}_\alpha\circ \tau_{\infty,\lfloor
m/n\rfloor}$. So, let us rewrite $N^{n,\alpha}=\tau_{\infty,\lfloor
m/n \rfloor}(N^{n,\alpha})\in \HS_k(\log I; \lfloor m/n\rfloor)$ and
we have that
$$
D=\circ_{n=1}^m \left(\circ_{\alpha\in L_n} \left( \theta^{n,m}_n
\bullet \widetilde{N^{n,\alpha}}\right)\right)
$$
(note that $\widetilde{\tau_{\infty,s}(N})=\tau_{\infty,s}(\widetilde{N})$ for any $N\in
\HS_k(R;m)$ and $s\geq 1$ by Lemma \ref{Propiedades Extension}).
Moreover $\phi^{n,m}_\alpha$ is the induced map by
$\theta^{n,m}_\alpha$ in $A_L$.  Therefore, by Lemmas \ref{Susti y
Poli} and \ref{Extension y poli},
$$
\begin{array}{rl}
E=\Pi_{\HS,m}^{I^e}(D)=&\displaystyle  \circ_{n=1}^m
\left(\circ_{\alpha\in L_n} \left(\Pi_{\HS,m}^{I^e} \left(
\theta^{n,m}_\alpha \bullet
\widetilde{N^{n,\alpha}}\right)\right)\right)= \circ_{n=1}^m
\left(\circ_{\alpha\in L_n}\left( \phi^{n,m}_\alpha \bullet
\left(\Pi^{I^e}_{\HS, \lfloor
m/n\rfloor}(\widetilde{N^{n,\alpha}})\right)\right)\right)\\[0.2cm]
 =&\displaystyle \circ_{n=1}^m
\left(\circ_{\alpha\in L_n}\left( \phi^{n,m}_\alpha \bullet
\left(\widetilde{M^{n,\alpha}}\right)\right)\right)
\end{array}
$$
where $\widetilde{M^{n,\alpha}}\in \HS_L(A_L;m)$ is the extension of
$\Pi^{I}_{\HS, \lfloor m/n\rfloor}(N^{n,\alpha})\in \HS_k(A; \lfloor
m/n\rfloor)$ and the theorem is proved.

\end{proof}

\begin{cor}\label{Anillo polinomios, algebra, sobrey}
Let $k$ be a ring, $L=k[t_i\ |\mbox{ }i\in \I]$ and $A$ a finitely
generated $k$-algebra. We denote $A_L=A\otimes_k L$. Then,
$\Phi_m^{L,A}:L\otimes \IDer_k(A;m)\rightarrow \IDer_L(A_L;m)$ is an
isomorphism of $A_L$-modules for all $m\in \N$. Moreover,
$\Leap_k(A)=\Leap_L(A_L)$.
\end{cor}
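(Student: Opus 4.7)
The plan is to read off the corollary directly from the previous theorem, combined with the general machinery of base change set up in Section~3. Injectivity of $\Phi_m^{L,A}$ is immediate from Lemma~\ref{Inyectividad/Sobreyectividad}(1), since a polynomial ring is free, hence flat, over $k$. So everything reduces to surjectivity (and then a short deduction for leaps).

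For surjectivity, I would start with an arbitrary $\delta \in \IDer_L(A_L;m)$ and fix an $m$-integral $E \in \HS_L(A_L;m)$, i.e.\ $E_1 = \delta$. The previous theorem provides, for each $n = 1,\dots,m$, a finite subset $L_n \subseteq \N^{(\I)} \setminus \PR_n$ and HS-derivations $M^{n,\alpha} \in \HS_k(A;\lfloor m/n\rfloor)$ for $\alpha \in L_n$, such that
$$
E = \circ_{n=1}^m \left(\circ_{\alpha \in L_n}\left(\phi^{n,m}_\alpha \bullet \widetilde{M^{n,\alpha}}\right)\right),
$$
where $\phi^{n,m}_\alpha(\mu) = t^\alpha \mu^n$. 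The crucial observation is that the 1-component of $\phi^{n,m}_\alpha \bullet \widetilde{M^{n,\alpha}}$ vanishes whenever $n \geq 2$, because the substitution starts in degree $n$. Reading off degree $1$ from the composition formula in Lemma~\ref{l(de la composicion)} therefore gives
$$
\delta = E_1 = \sum_{\alpha \in L_1} t^\alpha \, \widetilde{(M^{1,\alpha})_1}.
$$
Since $\lfloor m/1\rfloor = m$, each $M^{1,\alpha}$ is already an $m$-integral in $\HS_k(A;m)$ of $(M^{1,\alpha})_1$, so $(M^{1,\alpha})_1 \in \IDer_k(A;m)$. Hence $\delta = \Phi_m^{L,A}\bigl(\sum_{\alpha \in L_1} t^\alpha \otimes (M^{1,\alpha})_1\bigr)$, which proves surjectivity.

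For the statement on leaps, it suffices to invoke Lemma~\ref{Saltos y cambio de base}: a polynomial ring over $k$ is free and therefore faithfully flat over $k$, and we have just shown that $\Phi_m^{L,A}$ is bijective for every $m \geq 1$, so $\Leap_k(A) = \Leap_L(A_L)$ follows immediately. I do not anticipate any real technical obstacle here: all of the structural work is concentrated in Theorems~\ref{Expresion de D en RL} and~\ref{Expresion de D, trascen, I-log} and the subsequent theorem that descends them from $R$ to $A$; the only delicate point in the present proof is the vanishing of the $1$-component for $n \geq 2$, which is essentially automatic once the decomposition is in place.
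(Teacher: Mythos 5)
Your proposal is correct and follows essentially the same route as the paper's own proof: injectivity from flatness via Lemma~\ref{Inyectividad/Sobreyectividad}, surjectivity by applying the preceding decomposition theorem to an $m$-integral of $\delta$ and observing that only the $n=1$ factors contribute to the $1$-component (with $\lfloor m/1\rfloor=m$ giving $(M^{1,\alpha})_1\in\IDer_k(A;m)$), and the leaps statement from faithful flatness via Lemma~\ref{Saltos y cambio de base}.
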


\begin{proof}
Since $L$ is flat over $k$, from Lemma
\ref{Inyectividad/Sobreyectividad}, $\Phi_m^{L,A}$ is injective. To
prove the surjectivity, we take $\delta\in \IDer_L(A_L;m)$. By
definition of integrability, there exists $E\in \HS_L(A_L;m)$ such
that $E_1=\delta$. By the previous theorem, we can write $E$ as
$$
E=\circ_{n=1}^m \left(\circ_{\alpha\in L_n}\left(\phi^{n,m}_\alpha
\bullet \widetilde{M^{n,\alpha}}\right)\right)
$$
where, for all $n=1,\ldots,m$, $L_n$ is a finite subset of
$\N^{(\I)}\setminus \PR_n$ and, for all $\alpha\in L_n$,
$M^{n,\alpha} \in \HS_k(A;\lfloor m/n \rfloor)$ and
$\phi^{n,m}_\alpha:A_L[|\mu|]_{\lfloor m/n\rfloor}\to A_L[|\mu|]_m$
is the substitution map given by
$\phi^{n,m}_{\alpha}(\mu)=t^\alpha\mu^n$. If $n>1$, then
$\ell\left(\phi^{n,m}_\alpha \bullet N\right)>1$ for all $N\in
\HS_L(A_L;m)$ and if $n=1$, then $M^{1,\alpha}_1\in \IDer_k(A;m)$.
Hence,
$$
\delta=E_1=\left(\circ_{\alpha\in L_1} \left(\phi^{1,m}_\alpha
\bullet \widetilde{M^{n,\alpha}}\right)\right)_1= \sum_{\alpha\in
L_1} t^\alpha \left(\widetilde {M^{n,\alpha}}\right)_1=\Phi^{L,A}_m
\left(\sum_{\alpha \in L_1} (t^\alpha\otimes M^{n,\alpha}_1)\right).
$$
So, $\Phi^{L,A}_m$ is surjective. Moreover, since $L$ is faithfully
flat over $k$, $\Leap_k(A)=\Leap_L(A_L)$ by Lemma \ref{Saltos y
cambio de base}.
\end{proof}


Let $L\supseteq k$ a pure transcendental field extension. Then, we
can express $L=T^{-1}L'$ where $L'=k[t_i\ |\mbox{ } i \in \I]$ and
$T=L'\setminus \{0\}$. Hence, for any finitely generated $k$-algebra
$A$, we have that
$$
L\otimes_k\IDer_k(A;m)\cong T^{-1}L'\otimes_{L'}
L'\otimes_k\IDer_k(A;m)\cong T^{-1}L' \otimes_{L'}
\IDer_{L'}(A_{L'};m).
$$
Now, let us recall the following proposition:

\begin{prop}{\rm \cite[{\bf Corollary 2.3.5}]{Na2}}\label{Localizacion e
integrables} Assume that $B$ is a finitely presented $C$-algebra,
where $C$ is a commutative ring, and let $T\subseteq B$ be a
multiplicative set. Then, for any integer $m\geq 1$, the canonical
map
$$
T^{-1}\IDer_C(B;m)\rightarrow  \IDer_C(T^{-1}B;m)
$$
is an isomorphism of $(T^{-1}B)$-modules.
\end{prop}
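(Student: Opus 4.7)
The plan is to prove bijectivity of the canonical map $\Psi:T^{-1}\IDer_C(B;m)\to\IDer_C(T^{-1}B;m)$ by treating injectivity and surjectivity separately, relying throughout on the standard fact that for a finitely presented $C$-algebra $B$ the analogous map at the level of derivations, $T^{-1}\Der_C(B)\to\Der_C(T^{-1}B)$, is already an isomorphism (an immediate consequence of the compatibility of $\Omega^1$ with localization). The $(T^{-1}B)$-linearity of $\Psi$ is routine once well-definedness is checked.

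For well-definedness and injectivity, the idea is to extend HS-derivations across localization. Given $D\in\HS_C(B;m)$ with associated algebra map $\varphi_D:B\to B[|\mu|]_m$, every $t\in T$ satisfies $\varphi_D(t)\equiv t\bmod\mu$ and is therefore a unit in $(T^{-1}B)[|\mu|]_m$; hence $\varphi_D$ extends uniquely to a map $T^{-1}B\to (T^{-1}B)[|\mu|]_m$, yielding an HS-derivation $\widehat{D}\in\HS_C(T^{-1}B;m)$. Composed with the rescaling $(1/s)\bullet(-)$, this shows that $\Psi(\delta/s)$ is well defined for $\delta\in\IDer_C(B;m)$ and $s\in T$. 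Injectivity of $\Psi$ then follows at once from the injectivity of the derivation analogue.

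The main work lies in surjectivity. Start from $\varepsilon\in\IDer_C(T^{-1}B;m)$ with an integral $E\in\HS_C(T^{-1}B;m)$, and write $B=C[x_1,\ldots,x_d]/\langle f_1,\ldots,f_r\rangle$ using finite presentation. Clearing denominators proceeds in two successive stages. First, since $\{E_i(\overline{x_j}):1\le i\le m,\,1\le j\le d\}\subset T^{-1}B$ is finite, pick $s\in T$ such that $s\,E_i(\overline{x_j})\in B$ for all such $i,j$. The composition of $\varphi_{s\bullet E}$ with the natural map $B\to T^{-1}B$ then sends each generator into $B[|\mu|]_m$, so it lifts to a tentative $C$-algebra homomorphism $\tilde\varphi:C[x_1,\ldots,x_d]\to B[|\mu|]_m$.

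The main obstacle is forcing $\tilde\varphi$ to descend across the relations, i.e., ensuring $\tilde\varphi(f_\ell)=0$ in $B[|\mu|]_m$ for each $\ell$. A priori one only knows that $\tilde\varphi(f_\ell)$ vanishes in $(T^{-1}B)[|\mu|]_m$, so its coefficients in $B$ are merely $T$-torsion. The trick is a second substitution: choose $v\in T$ annihilating the finite collection of torsion coefficients of all the $\tilde\varphi(f_\ell)$, and replace $s$ by $sv$. The effect on the relations is the formal substitution $\mu\mapsto v\mu$, so the coefficient of $\mu^n$ in $\tilde\varphi(f_\ell)$ gets multiplied by $v^n$ and hence killed for $n\geq 1$, while the $\mu^0$-term equals $f_\ell(\overline{x_1},\ldots,\overline{x_d})=0$ by construction. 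This produces an honest algebra map $B\to B[|\mu|]_m$ and hence $D\in\HS_C(B;m)$; a direct check shows $\Psi(D_1/(sv))=\varepsilon$. Finite presentation enters essentially in keeping the relation-torsion coefficients finite in number, so that a single $v\in T$ kills them all at once.
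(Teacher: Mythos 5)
The paper does not actually prove this proposition: it is quoted verbatim from \cite[Cor.~2.3.5]{Na2} and used as a black box, so there is no internal proof to compare against. Judged on its own, your argument is correct and self-contained. The extension of $D\in\HS_C(B;m)$ to $T^{-1}B$ via the universal property (each $\varphi_D(t)$ being a unit because $\mu$ is nilpotent in $(T^{-1}B)[|\mu|]_m$) gives well-definedness, and injectivity does reduce to the derivation-level isomorphism $T^{-1}\Der_C(B)\simeq\Der_C(T^{-1}B)$, which is where finite presentation of $\Omega_{B/C}$ enters. The surjectivity argument is the essential content, and your two-stage denominator clearing is sound: the first factor $s$ puts the finitely many coefficients $E_i(\overline{x_j})$ (finitely many precisely because $m<\infty$ and $d<\infty$) into the image of $B$, and the second factor $v$, acting through the substitution $\mu\mapsto v\mu$, kills the $T$-torsion coefficients of $\tilde\varphi(f_\ell)$ in degrees $n\geq 1$ while leaving the degree-$0$ term equal to $f_\ell(\overline{x})=0$. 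This correctly handles the case where $B\to T^{-1}B$ is not injective, which is the only genuine obstacle; one should just make explicit that the lifts for the modified map are chosen coherently (the lift of $(sv)^iE_i(\overline{x_j})$ being $v^i$ times the previously chosen lift), so that the effect on $\tilde\varphi(f_\ell)$ really is the substitution $\mu\mapsto v\mu$, and that the resulting $\widehat{D}$ coincides with $(sv)\bullet E$ by uniqueness of extensions to the localization, whence $\Psi(D_1/(sv))=\varepsilon$.
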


Hence, if $A$ is finitely presented $k$-algebra,
$T^{-1}L'\otimes_{L'} \IDer_{L'}(A_{L'};m)\cong \IDer_{L'}
(T^{-1}L'\otimes_{L'} A_{L'};m)=\IDer_{L'} (A_L;m)$. Moreover, it is
easy to prove that if $T\subseteq L'$, then any Hasse-Schmidt
derivation over $L'$ is $T^{-1}L'$-linear, so
$\HS_{L'}(A_L;m)=\HS_{T^{-1}L'}(A_L;m)$. Therefore,
$$
L\otimes_k\IDer_k(A;m)\cong
T^{-1}L'\otimes_{L'}\IDer_{L'}(A_{L'};m)\cong\IDer_{L}(A_L;m)
$$
and  we have proved the following corollary:
\begin{cor}\label{Trascendentes}
Let $k$ be a field and $L$ a pure transcendental field extension of $k$.
Assume that $A$ is a finitely presented $k$-algebra. Then, $
\Phi^{L,A}_m:L\otimes_k \IDer_k(A;m)\rightarrow \IDer_L(A_L;m) $ is
an isomorphism of $A_L$-modules for all $m\in \N$. Moreover, $\Leap_k(A)=\Leap_L(A_L)$.
\end{cor}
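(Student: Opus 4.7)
The plan is to bootstrap the pure-transcendental case from Corollary \ref{Anillo polinomios, algebra, sobrey} (the polynomial case) via localization, mirroring the discussion just preceding the statement. Writing $L = T^{-1}L'$ with $L' = k[t_i \mid i \in \I]$ and $T = L' \setminus \{0\}$, I would chain three natural isomorphisms:
$$L \otimes_k \IDer_k(A;m) \;\cong\; T^{-1}L' \otimes_{L'} \bigl(L' \otimes_k \IDer_k(A;m)\bigr) \;\cong\; T^{-1}\IDer_{L'}(A_{L'};m) \;\cong\; \IDer_{L'}(A_L;m).$$
The first isomorphism is tensor associativity; the second comes from applying Corollary \ref{Anillo polinomios, algebra, sobrey} to the finitely generated $k$-algebra $A$ and the polynomial ring $L'$; the third is Proposition \ref{Localizacion e integrables}, which applies because the hypothesis that $A$ is finitely presented over $k$ forces $A_{L'}$ to be finitely presented over $L'$, with the multiplicative set $T \subseteq L' \subseteq A_{L'}$. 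By naturality the composition realizes $\Phi^{L,A}_m$ with image $\IDer_{L'}(A_L;m)$.

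The one remaining point is to identify $\IDer_{L'}(A_L;m)$ with $\IDer_L(A_L;m)$, i.e.\ to verify that every HS-derivation of $A_L$ over $L'$ is automatically $L$-linear. For $D \in \HS_{L'}(A_L;m)$ and $t \in T$, each component $D_i$ with $i \geq 1$ vanishes on $t \in L'$ by $L'$-linearity, so applying the Leibniz rule to $t \cdot t^{-1} = 1$ and inducting on $i$ forces $D_i(t^{-1}a) = t^{-1} D_i(a)$ for all $a \in A_L$. This is the \emph{easy observation} alluded to by the author just before the statement. Once this identification is in place, $\Phi^{L,A}_m$ is an isomorphism of $A_L$-modules.

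The statement on leaps follows formally: since $k$ is a field, $L$ is faithfully flat over $k$, so bijectivity of $\Phi^{L,A}_m$ for every $m \geq 1$ combined with Lemma \ref{Saltos y cambio de base} yields $\Leap_k(A) = \Leap_L(A_L)$. No real obstacle is expected: all the substantive work has been done in Corollary \ref{Anillo polinomios, algebra, sobrey} (polynomial case) and Proposition \ref{Localizacion e integrables} (behavior of integrable derivations under localization), and the present corollary is essentially a bookkeeping consequence via localization and faithful flatness.
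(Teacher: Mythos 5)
Your proposal is correct and follows essentially the same route as the paper: writing $L=T^{-1}L'$ with $L'=k[t_i\mid i\in\I]$, chaining the polynomial-ring case (Corollary \ref{Anillo polinomios, algebra, sobrey}) with localization (Proposition \ref{Localizacion e integrables}), identifying $\IDer_{L'}(A_L;m)$ with $\IDer_L(A_L;m)$ via automatic $L$-linearity of HS-derivations over $L'$, and invoking Lemma \ref{Saltos y cambio de base} for the leaps. You even supply the Leibniz-rule verification of $L$-linearity that the paper leaves as an "easy to prove" remark.
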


\subsubsection{Separable extensions}

Let us consider a field $k$ of characteristic $p>0$ and $L$ a
$k$-algebra containing $k$. Recall that $L$ is separable over $k$ if
$L_K:=K\otimes_k L$ is reduced for every possible extension $K$ of
$k$. In this section, we prove that $\Phi_m^{L,A}$ is bijective when
$L$ is a separable algebra over a field $k$ and $A$ a finitely
generated $k$-algebra.

\begin{hip}\label{Hip para bases} Let $k$ be a ring of characteristic $p>0$
and $k\to L$ a free ring extension. Then, we assume that the following conditions hold.
\begin{itemize}
\item[1.] For every k-linearly independent subset
$\{a_i,\ i\in \mathcal I\}$ of $L$, the subset
$\left\{a_i^p,\mbox{ } i\in \I\right\}$ of $L$ continues to be
$k$-linearly independent.
\item[2.] For every $k$-basis $\{a_i,\mbox{ }i\in \mathcal I\}$ of $L$  and every
$k$-linearly independent set $\{b_1,\ldots,b_s\}$ of $L$, there
exists $\mathcal L\subseteq \mathcal I$ such that
$\{b_1,\ldots,b_s\}\cup \{a_i,\mbox{ }i\in \mathcal L\}$ is a
$k$-basis of $L$.
\end{itemize}
\end{hip}

\begin{nota}\label{Separable cumple Hip}
If $k$ is a field, then the second condition always holds and the
first one is equivalent to $L$ being a separable $k$-algebra (see
\cite[\S 15.4. Th. 2]{Bo}). Then, if $L$ is a separable $k$-algebra,
$L$ satisfies Hypothesis \ref{Hip para bases}. Unfortunately, we do
not know another type of extension that satisfies Hypothesis
\ref{Hip para bases}.
\end{nota}

From now on, we put $R=k[x_1,\ldots,x_d]$.

\begin{hip}\label{Hip para sobrey}
Let $l\geq 1$ be an integer. We say that $I\subseteq R$ satisfies $S_l$ if
$\Phi_{\text{ind},m}^{L,R}:L\otimes_k \IDer_k(\log I;m)\to
\IDer_L(\log I^e;m)$ is surjective of all $m<p^l$.
\end{hip}

Note that if $k\to L$ is a flat ring extension where $k$ is a ring of characteristic $p>0$, $S_1$ is satisfied
for all $I\subseteq R$ thanks to $\Phi_{\text{ind}}^{L,R}:L\otimes_k
\Der_k(\log I)\to \Der_k(\log I^e)$ is bijective and leaps only
occur at powers of $p$.

\begin{lem}\label{Derivaciones pl-1-log}
Let $l\geq 1$ be an integer and $k$ a ring of characteristic $p>0$.
Assume that $k\to L$ is a free ring extension and $I\subseteq R$
satisfies $S_l$. Let us consider a
$\left(p^l-1\right)-I$-logarithmic HS-derivation $D\in
\HS_L\left(R_L;p^l\right)$. Then, for each $k$-basis $\{a_i,\mbox{
}i\in \I\}$ of $L$, there exist a finite subset $\I_0\subseteq \I$
and a $\left(p^l-1\right)-I$-logarithmic HS-derivation $N^i\in
\HS_k\left(R;p^l\right)$ for each $i\in \I_0$ such that if
$$
E=\circ_{i\in \I_0} \left(a_i\bullet \widetilde{N^i}\right)
$$
(where we choose any order of composition) there exist a
$\left(p^{l-1}-1\right)-I^e$-logarithmic HS-derivation $T\in
\HS_L\left(R_L;p^{l-1}\right)$ and an $I^e$-logarithmic
HS-derivation $F\in \HS_L\left(\log I^e;p^l\right)$ with $\ell(F)>1$
such that
$$
D=E\circ T[p]\circ F.
$$
\end{lem}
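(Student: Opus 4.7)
The plan is to use the decomposition Corollary \ref{Descomposicion de HS en partes no log y log} after pre-composing $D$ with the inverse of an $E$ that exactly cancels $D_1$. Since $D$ is $\left(p^l-1\right)-I^e$-logarithmic, its truncation $\tau_{p^l,p^l-1}(D)$ lies in $\HS_L(\log I^e;\, p^l-1)$, so $D_1\in \IDer_L(\log I^e;\, p^l-1)$. Applying hypothesis $S_l$ with $m=p^l-1$, the map $\Phi^{L,R}_{\text{\rm ind},p^l-1}$ is surjective; expanding a preimage of $D_1$ along the $k$-basis $\{a_i\}$ of $L$ produces a finite subset $\I_0\subseteq \I$ and derivations $\delta_i\in \IDer_k(\log I;\, p^l-1)$ such that $D_1=\sum_{i\in\I_0} a_i\widetilde{\delta_i}$.

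For each $i\in \I_0$ I would choose an $I$-logarithmic integral $M^i\in \HS_k(\log I;\, p^l-1)$ of $\delta_i$; Proposition \ref{HSenpolise extiende} lets me extend $M^i$ to an integral in $\HS_k(R)$, whose truncation to length $p^l$ is an $N^i\in \HS_k(R;p^l)$ that is $\left(p^l-1\right)-I$-logarithmic (since $\tau_{p^l,p^l-1}(N^i)=M^i$). Define $E:=\circ_{i\in \I_0}\bigl(a_i\bullet \widetilde{N^i}\bigr)$ in any fixed order. Lemma \ref{Propiedades Extension} gives that each $\widetilde{N^i}$ is $\left(p^l-1\right)-I^e$-logarithmic; the substitution $a_i\bullet(-)$ preserves this property because its $j$-th component just multiplies $\widetilde{N^i}_j$ by $a_i^j\in L$; and Lemma \ref{lema-resumen}(d) then shows that $E$ is $\left(p^l-1\right)-I^e$-logarithmic with $E_1=\sum_{i\in\I_0} a_i\widetilde{\delta_i}=D_1$.

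Set $D':=E^\ast\circ D$. Lemma \ref{lema-resumen}(b) ensures that $E^\ast$ is $\left(p^l-1\right)-I^e$-logarithmic, and (d) that so is $D'$; moreover $D'_1=-E_1+D_1=0$, hence $\ell(D')>1$. Now Corollary \ref{Descomposicion de HS en partes no log y log}, valid over the characteristic-$p$ ring $L$ and the polynomial ring $R_L$, applied to $D'$, yields $T\in \HS_L(R_L;p^{l-1})$ that is $\left(p^{l-1}-1\right)-I^e$-logarithmic and $F\in \HS_L(\log I^e;p^l)$ with $\ell(F)>1$ satisfying $D'=T[p]\circ F$. Consequently $D=E\circ D'=E\circ T[p]\circ F$, as desired.

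The most delicate point is the construction of the $N^i$: one needs an $I$-logarithmic integral of $\delta_i$ only up to length $p^l-1$ (granted by $S_l$), and then Proposition \ref{HSenpolise extiende} supplies an arbitrary extension to length $p^l$ whose last component need not preserve $I$. This matches exactly the requirement that $N^i$ be $\left(p^l-1\right)-I$-logarithmic, and it explains why the hypothesis $S_l$, rather than a stronger hypothesis up to $p^l$, is sufficient to run the argument.
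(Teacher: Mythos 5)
Your proposal is correct and follows essentially the same route as the paper: use $S_l$ to write $D_1=\sum a_i\widetilde{\delta_i}$ with $\delta_i\in\IDer_k(\log I;p^l-1)$, build $E$ from $(p^l-1)$-$I$-logarithmic integrals $N^i$ of the $\delta_i$, and apply Corollary \ref{Descomposicion de HS en partes no log y log} to $E^\ast\circ D$. The only difference is that you spell out the construction of the $N^i$ (logarithmic integral to length $p^l-1$, then extension via Proposition \ref{HSenpolise extiende} and truncation), which the paper leaves implicit.
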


\begin{proof}
Since $\Phi_{\text{ind},p^l-1}^{L,R}:L\otimes_k \IDer_k(\log
I;p^l-1)\to \IDer_L(\log I^e;p^l-1)$ is surjective and $D_1\in
\IDer_L(\log I^e;p^l-1)$, there exist a subset $\I_0\subset\I$ and
$\delta_i\in \IDer_k(\log I;p^l-1)$ for each $i\in \I_0$ such that
$$
\Phi^{L,R}_{\text{ind},p^l-1}\left(\sum_{i\in \I_0} a_i\otimes
\delta_i\right)=\sum_{i\in \I_0} a_i\widetilde\delta_i=D_1.
$$
Let us consider a $(p^l-1)-I$-logarithmic integral $N^i\in
\HS_k(R;p^l)$ of $\delta_i$ for all $i\in \I_0$. Then, $
E:=\circ_{i\in \I_0} \left(a_i\bullet \widetilde{N^i}\right) $ is a
$p^l$-integral of $D_1$ (note that the order of the composition is
not important, $E$ is always an integral of $D_1$). Since $N^i$ is
$(p^l-1)-I$-logarithmic for all $i\in \I_0$, we have that
$\widetilde{N^i}$ is $(p^l-1)-I^e$-logarithmic (see Lemma
\ref{Propiedades Extension} (3)). Hence, by Lemma \ref{lema-resumen}
(b) and (d), $E^\ast$ is a $(p^l-1)-I^e$-logarithmic integral of
$-D_1$. Therefore, $E^\ast\circ D\in \HS_L(R_L;p^l)$ is a
$(p^l-1)-I^e$-logarithmic HS-derivation such that $\ell(E^\ast \circ
D)>1$. So, we can apply Corollary \ref{Descomposicion de HS en
partes no log y log} to this HS-derivation. Then, there exist a
$(p^{l-1}-1)-I^e$-logarithmic HS-derivation $T\in
\HS_L(R_L;p^{l-1})$ and $F\in \HS_L(\log I^e;p^l)$ with $\ell(F)>1$
such that
$$
E^\ast\circ D=T[p]\circ F \Rightarrow D=E\circ T[p]\circ F
$$
and the result is proved.
\end{proof}

\begin{teo}\label{Expresion para D en hip para base}
Let $l\geq 1$ be an integer and assume that $k\to L$ satisfies
Hypothesis \ref{Hip para bases} and the ideal $I\subseteq R$
satisfies $S_l$. Let us consider a
$\left(p^l-1\right)-I^e$-logarithmic HS-derivation
$D\in\HS_L(R_L;p^l)$. Then, for every $k$-basis $\{a_i,\mbox{ }i\in
\I\}$ of $L$, there exist, for all $j=0,\ldots, l$,
\begin{itemize}
\item a finite subset $\I_j$ of $\I$ and
\item a $\left(p^{l-j}-1\right)-I$-logarithmic HS-derivation $N^{j,n,i,j-n}\in
\HS_k\left(R;p^{l-j}\right)$ for each $i\in \I_{j-n}$, $0\leq n\leq
j$
\end{itemize}
such that for all $j=0,\ldots,l$
$$
\bigcup_{m=0}^j \left\{ a_i^{p^{j-m}},\mbox{ }i\in \I_m\right\}
\text{ is a $k$-linearly independent set of } L
$$
and, if we take
$$
E^j=\circ_{i\in \mathcal I_0} \left(a_i^{p^j}\bullet
\widetilde{N^{j,j,i,0}}\right)\circ \circ_{i\in \mathcal I_1}
\left(a_i^{p^{j-1}}\bullet \widetilde{N^{j,j-1,i,1}}\right)\circ
\cdots\circ \circ_{i\in \mathcal I_j} \left(a_i\bullet
\widetilde{N^{j,0,i,j}}\right)
$$
for all $j=0,\ldots, l$ then, there exists $F\in \HS_L(\log
I^e;p^l)$ with $\ell(F)>1$ such that
$$
D=E^0\circ E^1[p]\circ \cdots \circ E^l\left[p^l\right] \circ F.
$$
\end{teo}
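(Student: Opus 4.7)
The approach is induction on $l$, with Lemma \ref{Derivaciones pl-1-log} as the engine at each step and Hypothesis \ref{Hip para bases} used to choose $k$-bases of $L$ compatibly with the $p$-th power operation.

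For the base case $l=1$, apply Lemma \ref{Derivaciones pl-1-log} to obtain $D = E\circ T[p]\circ F$ with $E = \circ_{i\in\I_0}(a_i\bullet\widetilde{N^i})$, $T\in\Der_L(R_L)$ and $F\in\HS_L(\log I^e;p)$ with $\ell(F)>1$. Set $E^0 := E$ and $N^{0,0,i,0} := N^i$. By Hypothesis \ref{Hip para bases}~(1), $\{a_i^p,\ i\in\I_0\}$ is $k$-linearly independent; by Hypothesis \ref{Hip para bases}~(2) it extends to a $k$-basis of $L$ by adjoining some $\{a_j,\ j\in\mathcal L\}$ for a subset $\mathcal L\subseteq\I$. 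Expanding $T\in\Der_L(R_L)$ in this basis (as in the proof of Proposition \ref{prop:free-extension-der}) yields a finite $\I_1\subseteq\mathcal L$ and $k$-derivations $\delta_i^{(0)},\delta_j^{(1)}\in\Der_k(R)$ with $T = \sum_{i\in\I_0}a_i^p\widetilde{\delta_i^{(0)}} + \sum_{j\in\I_1}a_j\widetilde{\delta_j^{(1)}}$. Taking $N^{1,1,i,0} := (\Id,\delta_i^{(0)})$ and $N^{1,0,j,1} := (\Id,\delta_j^{(1)})$ makes $E^1 = T$ of the prescribed form, and the linear independence of $\{a_i^p,\ i\in\I_0\}\cup\{a_j,\ j\in\I_1\}$ is inherited from the basis.

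For the inductive step, apply Lemma \ref{Derivaciones pl-1-log} to obtain $D = E_0\circ T[p]\circ F_0$ with $T\in\HS_L(R_L;p^{l-1})$ being $(p^{l-1}-1)-I^e$-logarithmic; set $E^0 := E_0$. The crucial step is to apply the induction hypothesis to $T$ with respect to the $k$-basis $\mathcal B = \{a_i^p,\ i\in\I_0\}\sqcup\{a_j,\ j\in\mathcal L\}$ supplied by Hypothesis \ref{Hip para bases}~(2), with $\mathcal L\subseteq\I\setminus\I_0$ chosen whenever possible. This provides subsets $\mathcal K_0,\ldots,\mathcal K_{l-1}$ of the index set of $\mathcal B$ and a decomposition $T = \widetilde E^0\circ\widetilde E^1[p]\circ\cdots\circ\widetilde E^{l-1}[p^{l-1}]\circ\widetilde F$ of the prescribed shape relative to $\mathcal B$. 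Substituting back to the original basis — so a factor $c_\nu^{p^{j-1-m}}$ becomes $a_\nu^{p^{j-m}}$ for $\nu$ in the $\I_0$-part of $\mathcal B$ and $a_\nu^{p^{j-1-m}}$ for $\nu$ in the $\mathcal L$-part — and using Corollary \ref{La truncation y el desplazamiento iterado} and Lemma \ref{Relaciones entre operaciones} to pass substitution maps through shifts, one rewrites each $\widetilde E^{j-1}[p^j]$ in the form $E^j[p^j]$ with $\I'_m := \{i\in\I_0 : (0,i)\in\mathcal K_m\}\cup\{j'\in\mathcal L : (1,j')\in\mathcal K_{m-1}\}$ (padding with $\II$ to fill in missing indices). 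Setting $F' := \widetilde F[p]\circ F_0$ — which lies in $\HS_L(\log I^e;p^l)$ with $\ell(F')>1$ by Lemma \ref{lema-resumen}~(a) and Lemma \ref{l(de la composicion)} — closes the induction.

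The main obstacle is verifying the linear independence of $\bigcup_{m=0}^j\{a_i^{p^{j-m}},\ i\in\I'_m\}$ for each $j$. After the translation above, this reduces to the induction hypothesis's linear independence condition for the $\mathcal K_m$'s relative to $\mathcal B$, combined with the fact that the additional set $\{a_i^{p^j},\ i\in\I_0\}$ coming from $E^0$ is linearly independent from all the other terms — an instance of iterated Hypothesis \ref{Hip para bases}~(1), guaranteeing that taking $p$-th powers preserves $k$-linear independence. A secondary subtlety is the possible overlap $\I_0\cap\mathcal L$: an index $i\in\I_0\cap\mathcal L$ can contribute to both the $\I_0$-part and the $\mathcal L$-part of $\mathcal B$, and one must verify that no two factors of $E^j$ end up sharing the same base $a_i$ and the same exponent — a situation that cannot in general be merged into a single $N^{j,n,i,j-n}$, since $a_i\bullet(-)$ is not a group homomorphism on HS-derivations when $a_i\notin k$.
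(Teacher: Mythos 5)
Your strategy (induction on $l$, applying the theorem itself as a black box to the truncated derivation $T$ with respect to the new basis $\mathcal B=\{a_i^p,\ i\in\I_0\}\cup\{a_i,\ i\in\mathcal L\}$) differs from the paper's, which never inducts on $l$: it unrolls the recursion explicitly inside a fixed $l$, applying Lemma \ref{Derivaciones pl-1-log} successively to $T^1, T^2,\dots$, each time with a basis $\mathcal B_j=\mathcal C_{j-1}^p\cup\{a_i,\ i\in\mathcal L_j\}$ chosen \emph{after} the set $\mathcal C_{j-1}$ of already-used elements is known. That difference is exactly where your argument has a genuine gap, and it is the point you yourself flag as ``the main obstacle'': the required joint linear independence of $U_j=\bigcup_{m=0}^j\{a_i^{p^{j-m}},\ i\in\I'_m\}$ does \emph{not} reduce to ``IH independence for the $\mathcal K_m$'s plus iterated Hypothesis \ref{Hip para bases}(1)''. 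Hypothesis \ref{Hip para bases}(1) only says that $p$-th powers of a linearly independent set stay linearly independent; it says nothing about joint independence of sets sitting at \emph{different} $p$-power heights. Concretely, with $c_{(0,i)}=a_i^p$ for $i\in\I_0$ and $c_{(1,i)}=a_i$ for $i\in\mathcal L$, the inductive hypothesis gives independence of $V_{l-1}=\bigcup_{m}\{c_\nu^{p^{(l-1)-m}},\ \nu\in\mathcal K_m\}$, but you need independence of $\{c_{(0,i)}^{p^{l-1}},\ i\in\I_0\}\cup V_{l-1}$, where the first set strictly enlarges the top layer $\{c_\nu^{p^{l-1}},\nu\in\mathcal K_0\}$ of $V_{l-1}$. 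Nothing prevents a lower-layer basis element, say $c_3\in\mathcal K_1$ with $c_3=c_1^p+c_2^p$, from being dependent on the enlarged top layer while $V_{l-1}$ itself remains independent; the black-box IH chooses its own basis completions internally and gives you no control over this. The paper avoids the problem precisely because at every stage the whole accumulated set $\mathcal C_j$ is, by construction, a subset of the single basis $\mathcal B_j$. To repair your induction you would have to strengthen the inductive statement (e.g.\ carry along the assertion that the union extends to a $k$-basis compatibly with prescribed data), which amounts to re-doing the paper's bookkeeping.

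The ``secondary subtlety'' you mention is also a real, unresolved defect rather than a verification left to the reader. Hypothesis \ref{Hip para bases}(2) does not let you choose $\mathcal L$ disjoint from $\I_0$ (for $L=k(a)$ with basis $\{1,a,a^2,\dots\}$ and $\I_0$ indexing $a$, any completion of $\{a^p\}$ must reuse the index of $a$), so an index $i\in\I_0\cap\mathcal L$ can genuinely produce two factors of $E^j$ with the same base $a_i$ and the same exponent $p^{j-m}$ but different HS-derivations; as you note, these cannot be merged into a single $N^{j,j-m,i,m}$ because $a\bullet(-)$ is not a group homomorphism for $a\notin k$, so the output is not of the form required by the statement. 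In the paper's construction this collision cannot occur, since the coefficient of each basis element of $\mathcal B_j$ in the expansion of $(T^j)_1$ determines exactly one HS-derivation per slot $(j,m,i)$. The remaining parts of your argument (the composition identities via Corollary \ref{La truncation y el desplazamiento iterado} and Lemma \ref{Relaciones entre operaciones}, the form $F'=\widetilde F[p]\circ F_0$ with $\ell(F')>1$, the padding by $\mathbb I$) are fine, but the two gaps above are essential and are not closed by the proposal as written.
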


\begin{proof}
By Lemma \ref{Derivaciones pl-1-log}, there exist a finite subset
$\I_0\subseteq \I$ and  a $(p^l-1)-I$-logarithmic HS-derivation
$N^{0,0,i,0}\in \HS_k(R;p^l)$ for each $i\in \I_0$ such that, if we
take $E^0=\circ_{i\in \I_0} \left(a_i\bullet N^{0,0,i,0}\right)$,
there exist a $(p^{l-1}-1)-I^e$-logarithmic HS-derivation $T^1\in
\HS_L(R_L;p^{l-1})$ and $F\in \HS_L(\log I^e;p^l)$ with $\ell(F)>1$
such that
$$
D=E^0\circ T^1[p]\circ F.
$$
Observe that the set $\mathcal C_0:=\{a_i,\mbox{ } i\in \I_0\}$ of
$L$ is $k$-linearly independent so, by Hypothesis \ref{Hip para
bases}, we have that the set $\mathcal C_0^p:=\{a_i^p,\mbox{ }i\in
\I_0\}$ of $L$ is also $k$-linearly independent and from the point 2
in Hypothesis \ref{Hip para bases} (taking $\{a_i,\mbox{ } i\in
\I\}$ as $k$-basis) we obtain a subset $\mathcal L_1\subseteq \I$
such that $\mathcal B_1=\mathcal C_0^p \cup \{a_i,\mbox{ }i\in
\mathcal L_1\}$ is a $k$-basis of $L$.  Note that if $l\neq 1$, we
can apply the previous lemma to $T^1$ using the $k$-basis $\mathcal
B_1$ of $L$.

\medskip

\begin{La induccion}
Let us suppose that doing this process recursively we obtain that,
for some integer $j$ such that $0\leq j\leq l$, there exist for all
$s=0,\ldots, j-1$,
\begin{itemize}
\item a finite subset $\I_s$ of $\I$,
\item a $(p^{l-s}-1)-I$-logarithmic HS-derivation $N^{s,n,i,s-n}\in
\HS_k(R;p^{l-s})$ for all $i\in \I_{s-n}$ and  $0\leq n\leq s$
\end{itemize}
such that for all $s=0,\ldots,j-1$,
$$
\mathcal C_s=\bigcup_{m=0}^s \left\{a_i^{p^{s-m}},\mbox{ }
i\in\I_m\right\} \text{ is $k$-linearly independent set of $L$}
$$
and if we take
$$
E^s=\circ_{i\in \mathcal I_0} \left(a_i^{p^s}\bullet
\widetilde{N^{s,s,i,0}}\right)\circ \circ_{i\in \mathcal I_1}
\left(a_i^{p^{s-1}}\bullet \widetilde{N^{s,s-1,i,1}}\right)\circ
\cdots\circ \circ_{i\in \mathcal I_s} \left(a_i\bullet
\widetilde{N^{s,0,i,s}}\right)
$$
for all $s=0,\ldots,j-1$ then, there exist
\begin{itemize}
\item $F\in \HS_L(\log I^e;p^l)$ with $\ell(F)>1$ and
\item  a $(p^{l-j}-1)-I^e$-logarithmic HS-derivation $T^j\in
\HS_L(R_L;p^{l-j})$
\end{itemize}
such that
\begin{equation}\label{Separ.1}
D=E^0\circ E^1[p]\circ \cdots \circ E^{j-1}\left[p^{j-1}\right]\circ
T^j\left[p^j\right] \circ F.
\end{equation}
\end{La induccion}

Observe that since $\mathcal C_{j-1}$ is $k$-linearly independent,
then $\mathcal C_{j-1}^p=\bigcup_{m=0}^{j-1}\left\{
a_i^{p^{j-m}},\mbox{ }i\in \I_m\right\}$ is also a $k$-linearly
independent finite set of $L$. So, there exists a subset $\mathcal
L_j\subseteq \I$ such that $\mathcal B_j:=\mathcal C_{j-1}^p \cup
\{a_i,\mbox{ }i\in \mathcal L_j\}$ is a $k$-basis of $L$ (see
Hypothesis \ref{Hip para bases}).

\medskip
 Let us suppose that $j\neq l$, i.e. $l-j\geq 1$. Then, we can
apply Lemma \ref{Derivaciones pl-1-log} to $T^j$ using the $k$-basis
$\mathcal B_j$ of $L$. Hence, there exists a finite subset $\mathcal
I'_m$ of $\I_m$ for all $m=0,\ldots,j-1$, a finite set $\I'_j$ of
$\mathcal L_j$ and a $(p^{l-j}-1)-I$-logarithmic HS-derivation
$N^{j,n,i,j-n}\in \HS_k(R;p^{l-j})$ for each $0\leq n\leq j$ and
$i\in \I'_{j-n}$ such that, if we take
$$
E^j=\circ_{i\in \mathcal I'_0} \left(a_i^{p^j}\bullet
\widetilde{N^{j,j,i,0}}\right)\circ \circ_{i\in \mathcal I'_1}
\left(a_i^{p^{j-1}}\bullet \widetilde{N^{j,j-1,i,1}}\right)\circ
\cdots\circ \circ_{i\in \mathcal I_j} \left(a_i\bullet
\widetilde{N^{j,0,i,j}}\right)
$$
then, there exist $F'\in \HS_L\left(\log I^e;p^{l-j}\right)$ with
$\ell(F')>1$ and a $(p^{l-(j+1)}-1)-I^e$-logarithmic HS-derivation
$T^{j+1}\in \HS_L\left(R_L;p^{l-(j+1)}\right)$ such that
$$
T^j=E^j\circ T^{j+1}[p]\circ F'.
$$
Note that we can take $\I_m'=\I_m$ for all $0\leq n\leq j-1$ (it is enough to take
$N^{j,n,i,j-n}=\II$ for all $i\in \I_m\setminus \I'_m$) and let us rewrite $\I_j=\I_j'$. Moreover,
the subset $C_j=\bigcup_{m=0}^j \left\{a_i^{p^{j-m}},\mbox{ }i\in
\I_m\right\}$ of $L$ is $k$-linearly independent and, if we replace
$T^j$ in (\ref{Separ.1}), we obtain that
$$
D=E^0\circ \cdots \circ E^{j-1}\left[p^{j-1}\right]\circ
E^j[p^j]\circ T^{j+1}[p^{j+1}]\circ F'[p^j]\circ F.
$$
Observe that $F[p^j]\in \HS_L(\log I^e;p^l)$ so, $F:=F'[p^j]\circ
F\in \HS_L(\log I^e;p^l)$ with $\ell(F)>1$. Therefore, we have the
same condition that {\it Assumption} for $j+1$. So that, we can
apply this process until $j=l$.

\medskip

Let us suppose that $j=l$ in {\it Assumption}. Then, $T^l\in
\HS_L(R_L;1)\equiv \Der_L(R_L)$ and, by the proof of Proposition
\ref{prop:free-extension-der} with the $k$-basis $\mathcal
B_j=\mathcal B_l$, there exists a finite subset $\I_l\subseteq
\mathcal L_l\subseteq \I$ such that
$$
T^l=\circ_{i\in \I_0}
\left(a_i^{p^j}\bullet\widetilde{N^{l,l,i,0}}\right)\circ
\circ_{i\in \I_1} \left(a_i^{p^{j-1}}\bullet
\widetilde{N^{l,l-1,i,1}}\right)\circ\cdots \circ \left(\circ_{i\in
\I_l} a_i \bullet\widetilde{N^{l,0,i,l}}\right)
$$
where $N^{l,n,i,l-n}\in \HS_k(R;1)$ for each $i\in \I_{l-n}$ and
$0\leq n\leq l$. It is obvious that $\bigcup_{m=0}^l
\left\{a_i^{p^{j-m}},\mbox{ }i\in \I_{j-m}\right\}$ is a
$k$-linearly independent set of $L$ and since $D=E_0\circ
E_1[p]\circ \cdots \circ E^{l-1}[p^{l-1}]\circ T^l[p^l]\circ F$, we
have the result.
\end{proof}

\begin{teo}\label{Phi Sobrey si cumple hip para base}
Let $k\to L$ be a ring extension satisfying Hypothesis \ref{Hip para
bases} and $A$ a commutative finitely generated $k$-algebra. Then,
$\Phi_m^{L,A}:L\otimes_k \IDer_k(A;m)\to \IDer_L(A_L;m)$ is
an isomorphism of $A_L$-modules for all $m\in \N$. Moreover, $\Leap_k(A)=\Leap_L(A_L)$.
\end{teo}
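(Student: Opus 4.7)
The plan is to prove by induction on $l\geq 0$ that $\Phi^{L,R}_{\text{ind},m}$ is surjective for every $m<p^{l+1}$. Since $L$ is free over $k$ by Hypothesis~\ref{Hip para bases}, it is in particular (faithfully) flat, so Lemma~\ref{Inyectividad/Sobreyectividad} tells us that $\Phi^{L,A}_m$ is automatically injective and that its surjectivity is equivalent to that of $\Phi^{L,R}_{\text{ind},m}$ for $A=R/I$, $R=k[x_1,\ldots,x_d]$. By Theorem~\ref{Teo Integrabilidad} and Corollary~\ref{Saltos del cociente}, leaps in $\IDer_k(\log I;-)$ and $\IDer_L(\log I^e;-)$ occur only at powers of $p$, so both modules are constant between consecutive powers of $p$, and it is enough to check surjectivity of $\Phi^{L,R}_{\text{ind},p^l}$ for every $l\geq 0$. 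The base case $l=0$ is Proposition~\ref{prop:free-extension-der}.

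For the inductive step, assume $I$ satisfies $S_l$ and pick $\delta\in\IDer_L(\log I^e;p^l)$ together with an $I^e$-logarithmic $p^l$-integral $D\in\HS_L(\log I^e;p^l)$. Apply Theorem~\ref{Expresion para D en hip para base} to the $(p^l-1)-I^e$-logarithmic HS-derivation $D$ relative to a fixed $k$-basis $\{a_i\}_{i\in\I}$ of $L$. One obtains a decomposition
\[
D=E^0\circ E^1[p]\circ\cdots\circ E^l[p^l]\circ F,
\]
with $F\in\HS_L(\log I^e;p^l)$ satisfying $\ell(F)>1$, finite subsets $\I_0,\ldots,\I_l\subseteq\I$, and $(p^{l-j}-1)-I$-logarithmic HS-derivations $N^{j,j-n,i,n}\in\HS_k(R;p^{l-j})$ for $0\leq n\leq j\leq l$ and $i\in\I_n$. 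Because $\ell(E^j[p^j])\geq p^j>1$ for every $j\geq 1$ and $\ell(F)>1$, only $E^0$ contributes to the first component of $D$, giving
\[
\delta=D_1=E^0_1=\sum_{i\in\I_0}a_i\,\widetilde{(N^{0,0,i,0})_1}.
\]
Thus $\delta$ already lies in the image of $\Phi^{L,R}_{\text{ind},p^l-1}$ via $\sum_{i\in\I_0}a_i\otimes (N^{0,0,i,0})_1$, but we need to promote this preimage to $L\otimes_k\IDer_k(\log I;p^l)$.

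The main obstacle is to verify $(N^{0,0,i,0})_1\in\IDer_k(\log I;p^l)$ rather than just in $\IDer_k(\log I;p^l-1)$. My plan is to read this off from the $p^l$-th component of $D$. Using Lemma~\ref{lema-resumen}(d) inside each $E^j$ and once more on the whole composition, together with the identities $(E^j[p^j])_{p^l}=E^j_{p^{l-j}}$ and $(a_i^{p^{j-n}}\bullet\widetilde{N^{j,j-n,i,n}})_{p^{l-j}}=a_i^{p^{l-n}}\,\widetilde{(N^{j,j-n,i,n})_{p^{l-j}}}$, one arrives at
\[
D_{p^l}=\sum_{n=0}^{l}\sum_{i\in\I_n}a_i^{p^{l-n}}\,\widetilde{P_{n,i}}+H,\qquad P_{n,i}:=\sum_{j=n}^{l}(N^{j,j-n,i,n})_{p^{l-j}},
\]
with $H$ an $I^e$-differential operator. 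Since $D_{p^l}$ itself sends $I^e$ into $I^e$ and the family $\{a_i^{p^{l-n}}:0\leq n\leq l,\ i\in\I_n\}$ is $k$-linearly independent in $L$ (part of the conclusion of Theorem~\ref{Expresion para D en hip para base}, rooted in condition~1 of Hypothesis~\ref{Hip para bases} applied iteratively), I would extend this family to a $k$-basis of $L$ via condition~2 of Hypothesis~\ref{Hip para bases} and, identifying $I^e=I\otimes_k L$ in this basis, deduce $P_{n,i}(f)\in I$ for all $f\in I$ and every pair $(n,i)$. Running the same top-component analysis inductively on each shorter HS-derivation $N^{j,j-n,i,n}\in\HS_k(R;p^{l-j})$ with $j\geq 1$—whose analogous identities are delivered by the inductive hypothesis at level $l-j<l$—forces every $j\geq 1$ summand in $P_{0,i}$ already to preserve $I$, and hence so does the leftover $j=0$ term $(N^{0,0,i,0})_{p^l}$. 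This promotes $N^{0,0,i,0}$ to a fully $I$-logarithmic HS-derivation of length $p^l$ and yields $(N^{0,0,i,0})_1\in\IDer_k(\log I;p^l)$.

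This closes the induction, establishing bijectivity of $\Phi^{L,A}_m$ for every $m\in\N$. The identity $\Leap_k(A)=\Leap_L(A_L)$ then follows from Lemma~\ref{Saltos y cambio de base}, using that $L$ is faithfully flat over $k$ (as it is nonzero and free).
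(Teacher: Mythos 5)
Your proposal tracks the paper's argument closely --- the reduction to surjectivity of $\Phi^{L,R}_{\text{\rm ind},p^l}$, the base case, the use of Theorem \ref{Expresion para D en hip para base}, and the computation of $D_{p^l}$ showing that each $P_{0,i}=\sum_{j=0}^l \left(N^{j,j,i,0}\right)_{p^{l-j}}$ is an $I$-differential operator are all correct and coincide with the paper. The gap is in your final step. You claim that ``running the same top-component analysis inductively'' on each $N^{j,j,i,0}$ with $j\geq 1$ forces every $j\geq 1$ summand of $P_{0,i}$ to preserve $I$ individually, so that the leftover term $\left(N^{0,0,i,0}\right)_{p^l}$ does too. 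This is unjustified: the $N^{j,j,i,0}$ are HS-derivations of $R$ over $k$ produced by Theorem \ref{Expresion para D en hip para base} with only $(p^{l-j}-1)-I$-logarithmicity guaranteed, and the inductive hypothesis $S_l$ is a statement about surjectivity of the base-change map for derivations over $L$; it gives you no control over the top component of an individual HS-derivation over $k$. There is no reason each summand should separately send $I$ into $I$ --- only their sum is constrained.

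The paper avoids this entirely: it never promotes $N^{0,0,i,0}$ itself to a fully $I$-logarithmic HS-derivation. Instead it forms the composite
$$
D^i:=N^{0,0,i,0}\circ N^{1,1,i,0}[p]\circ\cdots\circ N^{l,l,i,0}\left[p^l\right]\in \HS_k\left(R;p^l\right),
$$
which is still a $p^l$-integral of $\left(N^{0,0,i,0}\right)_1$ because the shifted factors have $\ell(\cdot)>1$. By Lemma \ref{lema-resumen} (a) and (d), $D^i$ is $(p^l-1)-I$-logarithmic and $D^i_{p^l}=\sum_{j=0}^l \left(N^{j,j,i,0}\right)_{p^{l-j}}+{}$(an $I$-differential operator), which is exactly the sum you showed preserves $I$. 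Hence $D^i\in\HS_k(\log I;p^l)$ and $\left(N^{0,0,i,0}\right)_1\in\IDer_k(\log I;p^l)$, which is all that is needed to conclude. You should replace your last step with this composite construction; the rest of your argument then goes through.
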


\begin{proof}  If $\Phi_m^{L,A}$ is bijective, since $L$ is faithfully flat over $k$, we have that $\Leap_k(A)=\Leap_L(A_L)$ by Lemma \ref{Saltos y cambio de base}. Moveover, by Lemma
\ref{Inyectividad/Sobreyectividad} 1., $\Phi_m^{L,A}$ is injective for all $m\in \N$. So, we only need to prove that $\Phi^{L,A}_m$ is surjective.

Recall that we consider $A=R/I$ where
$R=k[x_1,\ldots,x_d]$ is a polynomial ring in a finite number of
variable and $I\subseteq R$ an ideal. By Lemma
\ref{Inyectividad/Sobreyectividad},  $\Phi_m^{L,A}$ is surjective if and only if
$\Phi^{L,R}_{\text{ind},m}:L\otimes_k \IDer_k(\log I;m)\to
\IDer_L(\log I^e;m)$ is surjective. So, we will prove that
$\Phi_{\text{ind},m}^{L,R}$ is surjective for all $m\in \N$.
Moreover, since leaps only occur at powers of $p$, it is enough to
see that $\Phi_{\text{ind},m}^{L,R}$ is surjective when $m=p^l$ for
$l\geq 0$. We proceed by induction on $l\geq 0$.

\medskip
If $l=0$, Proposition \ref{prop:free-extension-der} gives us the
result in this case. Now, let us assume that
$\Phi_{\text{ind},m}^{L,R}$ is surjective for all $m<p^l$ with
$l\geq 1$, i.e. $I$ satisfies $S_l$, and we prove the theorem for
$\Phi_{\text{ind},p^l}^{L,R}$ with $l\geq 1$.

Let $\delta\in \IDer_L(\log I^e,p^l)$ be an $L$-derivation of $R_L$,
then there exists $D\in \HS_k(\log I^e;p^l)$ an integral of
$\delta$. In particular, $D$ is $(p^l-1)-I^e$-logarithmic and we can
apply Theorem \ref{Expresion para D en hip para base} to $D$. Let us
consider a $k$-basis $\{a_i,\mbox{ }i\in \I\}$ of $L$. Then, for all
$j=0,\ldots, l$, there exist
\begin{itemize}
\item a finite subset $\I_j$ of $\I$ and
\item a $(p^{l-j}-1)-I$-logarithmic HS-derivation $N^{j,n,i,j-n}\in
\HS_k(R;p^{l-j})$ for each $i\in \I_{j-n}$ and $0\leq n\leq j$
\end{itemize}
such that, for all $j=0,\ldots, l$ the subset
$$
\bigcup_{m=0}^j \left\{a_i^{p^{j-m}},\mbox{ }i\in \I_m\right\}
\mbox{ of $L$ is $k$-linearly independent}
$$
and, if we take
$$
E^j=\left(\circ_{i\in \I_0} a_i^{p^j}\bullet
\widetilde{N^{j,j,i,0}}\right)\circ \cdots \circ \left(\circ_{i\in
\I_j} a_i\bullet \widetilde{N^{j,0,i,j}}\right)
$$
for all $j=0,\ldots, l$, there exists $F\in \HS_L(\log I^e;p^l)$
with $\ell(F)>1$ such that
$$
D=E^0\circ E^1[p]\circ \cdots \circ E^{l}\left[p^l\right]\circ F.
$$
For each $j=0,\ldots,l$, $N^{j,n,i,j-n}$ is
$\left(p^{l-j}-1\right)-I$-logarithmic for all $0\leq n\leq j$ and
$i\in \I_{j-n}$. So, $\widetilde{N^{j,n,i,j-n}}$ is
$\left(p^{l-j}-1\right)-I^e$-logarithmic for all $0\leq n\leq j$ and
$i\in \I_{j-n}$ (see Lemma \ref{Propiedades Extension} (3)).
Therefore, by Lemma \ref{lema-resumen} (d), $E^j\in
\HS_L(R_L;p^{l-j})$ is $\left(p^{l-j}-1\right)-I^e$-logarithmic and
$$
E^j_{p^{l-j}}=\sum_{i\in \I_0} \left(a_i^{p^j}\right)^{p^{l-j}}
\widetilde{N^{j,j,i,0}_{p^{l-j}}}+ \cdots +\sum_{i\in \I_j}
a_i^{p^{l-j}} \widetilde{N^{j,0,i,j}_{p^{l-j}}}+\text{\it some
$I^e$-diff. op.}
$$
Hence, from Lemma \ref{lema-resumen} (a), $E^j\left[p^j\right]\in
\HS_L\left(R_L;p^l\right)$ is $\left(p^l-1\right)-I^e$-logarithmic
for all $j$ and
$$
E^j[p^j]_{p^l}=E_{p^{l-j}}^j=\sum_{k=0}^j \sum_{i\in \I_k}
a_i^{p^{l-k}} \widetilde{N_{p^{l-j}}^{j,j-k,i,k}}+\mbox{\it some
$I^e$-diff. op.}
$$
So, by Lemma \ref{lema-resumen} (d),
$$
D_{p^l}=\sum_{j=0}^l E^j[p^j]_{p^l}+\mbox{\it some $I^e$-diff.
op.}=\sum_{j=0}^l\sum_{k=0}^j \sum_{i\in \I_k} a_i^{p^{l-k}}
\widetilde{N_{p^{l-j}}^{j,j-k,i,k}}+\mbox{\it some $I^e$-diff. op.}
$$
Since $D_{p^l}$ is an $I^e$-differential operator,
$$
\sum_{j=0}^l\sum_{k=0}^j \sum_{i\in \I_k} a_i^{p^{l-k}}
\widetilde{N_{p^{l-j}}^{j,j-k,i,k}}=\sum_{i\in \I_0}a_i^{p^l}
\left(\sum_{j=0}^l
\widetilde{N_{p^{l-j}}^{j,j,i,0}}\right)+\sum_{i\in \I_1}
a_i^{p^{l-1}}\left(\sum_{j=1}^l
\widetilde{N_{p^{l-j}}^{j,j-1,i,1}}\right)+\cdots+\sum_{i\in \I_l}
a_i\widetilde{N_1^{l,0,i,l}}
$$
is an $I^e$-differential operator.

\smallskip

Since $\mathcal C:=\bigcup_{k=0}^l \left\{a_i^{p^{l-k}},\mbox{ }i\in
\I_k\right\}$ is a $k$-linearly independent finite set of $L$ and
$\{a_i,\mbox{ }i\in \I\}$ is a $k$-basis of $L$, by Hypothesis
\ref{Hip para bases}, there exists $\mathcal L\subseteq \mathcal I$
such that $\mathcal C\cup \{a_i,\mbox{ }i\in \mathcal L\}$ is a
$k$-basis of $L$. Hence, we can deduce, in the same way that in the
proof of Proposition \ref{prop:free-extension-der}, that
$$
\sum_{j=0}^l N_{p^{l-j}}^{j,j,i,0} \mbox{ is an }
I\mbox{-differential operator for all }
i\in \I_0
$$
(note that
$\widetilde{N_{p^{l-j}}^{j,j,i,0}}_{|R}=N_{p^{l-j}}^{j,j,i,0}$).

For all $i\in \I_0$, let us consider $D^i=N^{0,0,i,0}\circ
N^{1,1,i,0}[p]\circ \cdots\circ N^{l,l,i,0}\left[p^l\right]\in
\HS_k(R;p^l)$ an integral of $N^{0,0,i,0}_1$. Since $N^{j,j,i,0}\in
\HS_k(R;p^{l-j})$ is $(p^{l-j}-1)-I$-logarithmic for all $j=0,\ldots
,l$, $N^{j,j,i,0}[p^j]\in \HS_k(R;p^l)$ is $(p^l-1)-I$-logarithmic
and by Lemma \ref{lema-resumen} (d), $D^i\in \HS_k(R;p^l)$ is
$(p^l-1)-I$-logarithmic and
$$
D^i_{p^l}=\sum_{j=0}^l N^{j,j,i,0}_{p^{l-j}}+\text{\it some
$I$-differential operator}
$$
So, $D^i\in \HS_k\left(\log I;p^l\right)$ and we can deduce that
$N^{0,0,i,0}_1\in\IDer_k\left(\log I;p^l\right)$. On the other hand,
we recall that
$$
D=E^0\circ E^1[p]\circ\cdots\circ E^l\left[p^l\right] \circ F
$$
where $\ell(F)>1$. Then, $D_1=E^0_1$ and, since $E^0=\circ_{i\in
\I_0}\left( a_i\bullet \widetilde{N^{0,0,i,0}}\right)$, we have that
$$
D_1=\sum_{i\in \I_0} a_i
\widetilde{N_1^{0,0,i,0}}=\Phi^{L,R}_{\text{ind},p^l}
\left(\sum_{i\in \I_0} \left(a_i\otimes N_1^{0,0,i,0}\right)\right)
$$
Therefore, $\Phi^{L,R}_{\text{ind},m}$ is bijective.
\end{proof}

\begin{nota} If we change the condition 2. in
Hypothesis \ref{Hip para bases} for
\begin{itemize}
\item[2'.] There exists a $k$-basis $\{a_i,\mbox{ }i\in \I\}$  of $L$ such that
$\left\{a^{p^r},\mbox{ }i\in \I\right\}\subseteq \{a_i,\mbox{ }i\in \I\}$ for all
$r\geq 1$.
\end{itemize}
then, Theorems \ref{Expresion para D en hip para base} and \ref{Phi
Sobrey si cumple hip para base} are true for that basis. For
example, if we take $L=k[t_i\ |\ i\in \I]$, we can apply these
theorems and we obtain that $\Phi_m^{L,A}$ is an isomorphism.
\end{nota}

\begin{cor}\label{Separables}
Let $k$ be a field of characteristic $p>0$, $k\to L$ a separable
extension and $A$ a commutative finitely generated $k$-algebra.
Then, $\Phi^{L,A}_m:L\otimes_k \IDer_k(\log I;m)\rightarrow
\IDer_k(\log I^e;m)$ is an isomorphism of $A_L$-modules for all $m\geq 1$. Moreover, $\Leap_k(A)=\Leap_L(A_L)$.
\end{cor}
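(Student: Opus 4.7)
The proof is a direct application of Theorem \ref{Phi Sobrey si cumple hip para base}; the entire content lies in verifying that its hypotheses hold for a separable field extension. The plan is therefore to reduce the corollary, in two short steps, to that theorem together with Lemma \ref{Saltos y cambio de base}.

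First I would check that $k \to L$ satisfies Hypothesis \ref{Hip para bases}. Since $k$ is a field, every linearly independent subset of $L$ can be completed to a $k$-basis by choosing additional elements from any prescribed basis, so condition 2 is automatic. Condition 1 — that the $p$-th powers of any $k$-linearly independent family of $L$ remain $k$-linearly independent — is precisely the classical characterization of separability of $L$ over $k$ in characteristic $p>0$, as recorded in Remark \ref{Separable cumple Hip} (see \cite[\S 15.4. Th. 2]{Bo}). So Hypothesis \ref{Hip para bases} holds for our extension.

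Second, I would apply Theorem \ref{Phi Sobrey si cumple hip para base} directly; it yields the bijectivity of $\Phi_m^{L,A}\colon L\otimes_k \IDer_k(A;m) \to \IDer_L(A_L;m)$ for every $m\geq 1$. The logarithmic reformulation written in the corollary's statement is equivalent to this via the commutative diagram (\ref{eq:diag-base-change-Iderlog}) and the five lemma, exactly as observed in Lemma \ref{Inyectividad/Sobreyectividad}. Finally, since any field extension is faithfully flat, the equality $\Leap_k(A) = \Leap_L(A_L)$ follows at once from Lemma \ref{Saltos y cambio de base}.

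In short, there is no real obstacle here: the heavy lifting — the decomposition of $(p^l-1)-I^e$-logarithmic HS-derivations in Theorem \ref{Expresion para D en hip para base} and the inductive surjectivity argument in Theorem \ref{Phi Sobrey si cumple hip para base} — has already been carried out under the abstract Hypothesis \ref{Hip para bases}, and the only thing left to do is to recognize that separability is exactly what makes that hypothesis hold in the field case.
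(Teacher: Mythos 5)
Your proposal is correct and is exactly the argument the paper intends: the corollary is an immediate consequence of Remark \ref{Separable cumple Hip} (separability of $L/k$ gives condition 1 of Hypothesis \ref{Hip para bases}, and condition 2 is automatic over a field) combined with Theorem \ref{Phi Sobrey si cumple hip para base} and Lemma \ref{Saltos y cambio de base}. Your additional observation that the logarithmic and non-logarithmic formulations are interchangeable via diagram (\ref{eq:diag-base-change-Iderlog}) correctly accounts for the way the map is written in the corollary's statement.
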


{\bf Acknowledgment.} The author thanks Professor Luis Narv\'aez
Macarro for his careful reading of this paper with numerous useful
comments.

\end{document}